\documentclass[12pt]{article}
\usepackage{amsmath,amsfonts,amsthm,yhmath}
\usepackage[square]{natbib}
\usepackage{multirow}
\usepackage{textcomp}
\usepackage[utf8]{inputenc}
\usepackage[english]{babel}
\usepackage{graphicx,psfrag}
\usepackage{pgfplots}
\usepackage{floatflt}
\usepackage{pict2e,wrapfig}
\usepackage{caption}
\usepackage{authblk}
\captionsetup{justification=justified}
\usepackage{hyperref}

\usepackage{algorithmic,algorithm}


\newcommand{\nl}{\vskip 0.3cm}

\newcommand{\C}{{\mathbb C}}
\newcommand{\R}{{\mathbb R}}
\newcommand{\Z}{{\mathbb Z}}
\newcommand{\N}{{\mathbb N}}

\renewcommand{\H}{\mathbb{H}}



\renewcommand{\Im}{\mbox{\rm Im}\,}
\renewcommand{\Re}{\mbox{\rm Re}\,}

\newtheorem{thm}{Theorem}[section]
\newtheorem*{thm*}{Theorem}
\newtheorem{cor}[thm]{Corollary}
\newtheorem{lem}[thm]{Lemma}
\newtheorem*{lem*}{Lemma}

\newtheorem{rem}[thm]{Remark}



\usepackage[off]{auto-pst-pdf}

\oddsidemargin=0pt

\evensidemargin=0pt

\textheight=22,2cm

\textwidth=17cm

\headheight=0pt

\headsep=0pt

\topmargin=0pt

\footskip=35pt

\title{Phase retrieval for the Cauchy wavelet transform}
\author[1]{St\'ephane Mallat}
\author[1]{Ir\`ene Waldspurger}
\affil[1]{D\'epartement d'informatique, \'Ecole Normale Sup\'erieure, Paris}
\date{}

\begin{document}

\maketitle

\begin{abstract}
We consider the phase retrieval problem in which one tries to reconstruct a function from the modulus of its wavelet transform. We study the unicity and stability of the reconstruction.

In the case where the wavelets are Cauchy wavelets, we prove that the modulus of the wavelet transform uniquely determines the function up to a global phase. We show that the reconstruction operator is continuous but not uniformly continuous. We describe how to construct pairs of functions which are far away in $L^2$-norm but whose wavelet transforms are very close, in modulus. The principle is to modulate the wavelet transform of a fixed initial function by a phase which varies slowly in both time and frequency. This construction seems to cover all the instabilities that we observe in practice; we give a partial formal justification to this fact.

Finally, we describe an exact reconstruction algorithm and use it to numerically confirm our analysis of the stability question.
\end{abstract}

\section{Introduction}

A phase retrieval problem consists in reconstructing an unknown object $f$ from a set of phaseless linear measurements. More precisely, let $E$ be a complex vector space and $\{L_i\}_{i\in I}$ a set of linear forms from $E$ to $\C$. We are given the set of all $\left|L_i(f)\right|,i\in I$, for some unknown $f\in E$ and we want to determine $f$.

\noindent This problem can be studied under three different viewpoints:
\begin{itemize}
\item Is $f$ uniquely determined by $\left\{\left|L_i(f)\right|\right\}_{i\in I}$ (up to a global phase)?
\item If the answer to the previous question is positive, is the inverse application $\left\{\left|L_i(f)\right|\right\}_{i\in I}\to f$ ``stable''? For example, is it continuous? Uniformly Lipschitz?
\item In practice, is there an efficient algorithm which recovers $f$ from $\left\{\left|L_i(f)\right|\right\}_{i\in I}$?
\end{itemize}

\nl
The most well-known example of a phase retrieval problem is the case where the $L_i$ represent the Fourier transform. The unknown object is some compactly-supported function $f\in L^2(\R,\C)$ and the problem is:
\begin{equation*}
\mbox{reconstruct }f\mbox{ from }|\hat f|
\end{equation*}
Because of its important applications in physics, this problem has been extensively studied from the 50's. Unfortunately, \citet{akutowicz} and \citet{walther} have shown that it is not solvable. Indeed, for any $f$, there generally exists an infinite number of compactly-supported $g$ such that $|\hat f|=|\hat g|$.

\nl

We are interested in the problem which consists in reconstructing $f\in L^2(\R)$ from the modulus of its wavelet transform.

A wavelet is a (sufficiently regular) function $\psi:\R\to\C$ such that $\int_\R\psi(x)dx=0$. For any $j\in\Z$, we define $\psi_j(x)=a^{-j}\psi(a^{-j}x)$, which is equivalent to $\hat\psi_j(x)=\hat\psi(a^jx)$. The number $a$ may be any real in $]1;+\infty[$. The wavelet transform of a function $f\in L^2(\R)$ is:
\begin{equation*}
\{f\star\psi_j\}_{j\in\Z}\in(L^2(\R))^\Z
\end{equation*}
Our problem is then the following:
\begin{equation}\label{eq:prob_statement}
\mbox{reconstruct }f\in L^2(\R)\mbox{ from }\{|f\star\psi_j|\}_{j\in\Z}
\end{equation}
It can be seen as a collection of phase retrieval subproblems, where the linear form of each subproblem is the Fourier transform. Provided that $\psi$ is not pathological and $f$ is uniquely determined by $\{f\star\psi_j\}$, the problem is indeed equivalent to:
\begin{equation*}
\mbox{reconstruct }\{\hat f.\hat\psi_j\}_{j\in\Z}\mbox{ from }
\left\{\left|\mathcal{F}\left(\hat{f}.\hat{\psi}_j\right)\right|\right\}_{j\in\Z}
\end{equation*}
where $\mathcal{F}$ is another notation for the Fourier transform.

Even if, for any given $j$, it is impossible to reconstruct $\hat f.\hat \psi_j$ from $\left|\mathcal{F}\left(\hat{f}.\hat{\psi}_j\right)\right|$ only, the reconstruction \eqref{eq:prob_statement} may be possible: the $\hat f.\hat \psi_j$ are not independent one from the other and we can use this information for reconstruction.

We consider here the case of Cauchy wavelets. In this case, the relations between the $\hat f.\hat\psi_j$ may be expressed in terms of holomorphic functions. This allows us to study the problem \eqref{eq:prob_statement} with the same tools as in \citep{akutowicz}. We show that $f$ is uniquely determined by $\{|f\star\psi_j|\}_{j\in\Z}$ and we are able to study the stability of the reconstruction. We show that, when the wavelet transform does not have too many small values, the reconstruction is stable, up to modulation of the different frequency bands by low-frequency phases.

\nl
This problem of reconstructing a signal from the modulus of its wavelet transform is intersesting in practice because of its applications in audio processing.

Indeed, a common way to represent audio signals is to use the modulus of some time-frequency representation, either the short-time Fourier transform (spectrogram) or the wavelet transform (scalogram, \citep{shamma,anden}). Numerical results strongly indicate that the loss of phase does not induce a loss of perceptual information. Thus, some audio processing tasks can be achieved by modifying directly the modulus, without taking the phase into account, and then reconstructing a new signal from the modified modulus (\citep{griffin_lim},\citep{balan}), which requires to solve a phase retrieval problem.



\nl
The interest of the phase retrieval problem in the case of the wavelet transform is also theoretical.

A lot of work has been devoted to finding or characterizing systems of linear measurements whose modulus suffices to uniquely determine an unknown vector. If the underlying vector space is of finite dimension $n$, it is known that $4n-4$ generic linear forms are enough to guarantee the unicity (\citep{conca}). Specific examples of such linear forms have been given by \citet{bodmann} and \citet{fickus}. \citet{candes2} and \citet{candes_li2} have constructed random measurements systems for which unicity holds with high probability and their reconstruction algorithm \textit{PhaseLift} is guaranteed to succeed. These examples either rely on randomization techniques or have been carefully designed by means of algebraic tricks to guarantee the unicity of the reconstruction. By contrast, the (Cauchy) wavelet transform is a natural and deterministic system of linear measurements, for which unicity results can be proved.

Most of the research in phase retrieval has at first focused on the unicity of the reconstruction or on the algorithmic part. The question of whether the reconstruction is stable to measurement noise is more recent. \citet{bandeira_stability} and \citet{balan_wang} gave a necessary and sufficient condition for stability in the case where the unknown vector $x$ is real but it only partially extends to the complex case. For several random measurement systems, it has been proved that, with high probability, all signals are determined by the modulus of the linear measurements, in a way which is stable to noise (see for example \citep{candes_li2} and \citep{eldar}). Again, our measurement system presents the interest of being, on the contrary, totally deterministic. Moreover, to our knowledge, it is the first case where the question of stability does not have a binary answer (the reconstruction is ``partially stable'') and where we are able to precisely describe the instabilities.

\subsection{Outline and results}

In the section \ref{s:unicity}, we prove that a function is uniquely determined by the modulus of its Cauchy wavelet transform. Precisely, if $(\psi_j)_{j\in\Z}$ is a family of Cauchy wavelets, we have the following theorem:
\begin{thm*}
If $f,g\in L^2(\R)$ are two functions such that $\hat f(\omega)=\hat g(\omega)=0$ for any $\omega<0$ and if $|f\star\psi_j|=|g\star\psi_j|$ for all $j$, then:
\begin{equation*}
f=e^{i\phi}g\mbox{ for some }\phi\in\R
\end{equation*}
\end{thm*}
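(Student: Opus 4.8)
The plan is to translate the statement into a rigidity property of holomorphic functions, in the spirit of \citet{akutowicz}, exploiting the fact that the dilations $\psi_j$ furnish the modulus of \emph{one} holomorphic function on a whole family of horizontal lines, rather than on a single line as in the classical Fourier problem.

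First I would pass to the frequency side. Writing the Cauchy wavelet in the form $\hat\psi(\omega)=\omega^p e^{-\omega}\mathbf{1}_{\omega>0}$, the hypothesis $\hat f(\omega)=0$ for $\omega<0$ lets me define
\[
F(z)=\int_0^{+\infty}\hat f(\omega)\,\omega^p\,e^{i z\omega}\,d\omega ,
\]
which is holomorphic on the open half-plane $\Im z>0$, and similarly $G$ from $g$. A direct computation gives $f\star\psi_j(x)=\frac{a^{jp}}{2\pi}\,F(x+ia^j)$, and likewise for $g$ and $G$. Hence the assumption $|f\star\psi_j|=|g\star\psi_j|$ for every $j$ says exactly that $|F(x+ia^j)|=|G(x+ia^j)|$ for all $x\in\R$ and $j\in\Z$. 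The decisive feature is that each line $\Im z=a^j$ lies in the \emph{interior} of the domain of holomorphy, so no delicate boundary-value theory is needed: $F$ and $G$ are genuinely holomorphic in a neighbourhood of every such line.

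Next I would form the meromorphic quotient $\Phi=F/G$ on $\Im z>0$. The modulus identity gives $|\Phi|=1$ on each line $\Im z=a^j$, so the Schwarz reflection principle applies across every one of them, extending $\Phi$ meromorphically by $\Phi(\bar z+2ia^j)=1/\overline{\Phi(z)}$. Composing the reflections across two consecutive lines $\Im z=a^j$ and $\Im z=a^{j+1}$ yields a pure translation, and the two conjugations cancel, so $\Phi(z+2i a^j(a-1))=\Phi(z)$. Iterating the reflections across all the lines $\Im z=a^j$ and their successive images (whose heights accumulate at $0$ since $a^j\to0$ as $j\to-\infty$) extends $\Phi$ to a meromorphic function on all of $\C$ admitting every $2i a^j(a-1)$, $j\in\Z$, as a period.

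Finally, since $a>1$ the periods $2a^j(a-1)\to 0$ as $j\to-\infty$, so $\Phi$ has arbitrarily small periods; the period group of a nonconstant meromorphic function on $\C$ is discrete, so $\Phi$ must be constant, necessarily of modulus $1$. Thus $F=e^{i\phi}G$, and undoing the correspondence $f\leftrightarrow F$ (injective because $\omega\mapsto\omega^p$ does not vanish on $]0,+\infty[$, so $F=e^{i\phi}G$ forces $\hat f=e^{i\phi}\hat g$) gives $f=e^{i\phi}g$. The main obstacle I anticipate is the bookkeeping of zeros and poles: I must verify that $\Phi\not\equiv0,\infty$ (using $f,g\neq0$), that any zeros of $F$ and $G$ on the lines $\Im z=a^j$ coincide with equal multiplicity (forced by $|F|=|G|$ there) so that the reflection across those lines is licit, and that the successive reflections patch consistently into a single-valued meromorphic function on $\C$; these are the delicate points, the remainder being routine.
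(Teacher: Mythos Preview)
Your argument is correct and takes a genuinely different route from the paper's.

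The paper reduces to Theorem~\ref{thm:unicity_hol}, which uses only \emph{two} horizontal lines and therefore must invoke the full inner--outer (Kryloff/Nevanlinna) factorization of $F$ and $G$ into Blaschke product, outer part, and singular inner part; the bulk of the work is then a delicate analysis showing that the singular measures vanish and that the zero multiplicities of $F$ and $G$ coincide (Lemmas~\ref{lem:prolongation} and~\ref{lem:S_1}). Your proof, by contrast, exploits the whole family $\{\Im z=a^{j}\}_{j\in\Z}$: Schwarz reflection of $\Phi=F/G$ across each line gives the functional equation $\Phi(\sigma_j(z))=1/\overline{\Phi(z)}$, and composing two consecutive reflections yields imaginary periods $2ia^{j}(a-1)$ that accumulate at $0$, forcing $\Phi$ to be constant. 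This completely sidesteps Hardy-space factorization and is much more elementary; the price is that you need infinitely many scales $j$, whereas the paper's method already gives uniqueness from $|f\star\psi_j|$ and $|f\star\psi_k|$ for any two $j\neq k$ (Corollary~\ref{cor:unicity_wavelets}).

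Two small points worth tightening in your write-up. First, you do not actually need to ``iterate reflections'' to extend $\Phi$ to all of $\C$: a single reflection across $\Im z=a^j$ already carries $\H$ onto $\{\Im z<2a^j\}$, whose union with $\H$ is $\C$; consistency of the extensions for different $j$ then follows from the periodicity you have already established on $\H$. Second, your assertion that zeros of $F$ and $G$ on a line $\Im z=a^j$ have equal multiplicity is correct and can be phrased cleanly as: $|F(x+ia^j)|$ vanishes to order exactly $m$ at $x_0$ iff $F$ has a zero of order $m$ at $x_0+ia^j$, since $F(z)=(z-z_0)^m u(z)$ with $u(z_0)\neq 0$; hence $|F|=|G|$ on the line forces the orders to match, and $\Phi$ is holomorphic with $|\Phi|=1$ along the entire line. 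With these clarifications the argument is complete.
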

The proof uses harmonic analysis tools similar to the ones used by \citet{akutowicz}.

We also give a version of this result for finite signals. The proof is similar but easier. We show that it implies a unicity result for a system of $4n-2$ linear measurements.

\nl
Then, in the section \ref{s:weak_stability}, we prove (theorem \ref{thm:weak_stability}) that the reconstruction operator is continuous.

\nl
In the section \ref{s:non_uniform_continuity}, we explain why this operator is not uniformly continuous: there exist functions $f,g$ such that $||f-g||_2\not\ll||f||_2$ and $|f\star\psi_j|\approx|g\star\psi_j|$ for all $j$. In the light of \citep{bandeira_stability}, we give simple examples of such $(f,g)$. We then describe a more general construction of pairs $(f,g)$. The principle of this construction is to multiply the wavelet transform of a fixed signal $f$ by a ``slow-varying'' phase. Projecting this modified wavelet transform on the set of admissible wavelet transforms yields a new signal $g$. For each $j$, we have $|f\star\psi_j|\approx|g\star\psi_j|$, but we may have $f\not\approx g$.

\nl
In the section \ref{s:strong_stability_result}, we give explicit reconstruction formulas. We use them to prove a local form of stability of the reconstruction problem (theorems \ref{thm:stability_dyadic_case} and \ref{thm:stability_a_less_than_2}). Our result is approximately the following:
\begin{thm*}
Let $f,g\in L^2(\R)$ be such that $\hat f(\omega)=\hat g(\omega)=0$ for any $\omega<0$.

Let $j\in\Z,K\in\N^*$ be fixed.

We assume that, for each $l=j+1,...,j+K$, we have, for all $x$ in some interval:
\begin{gather*}
|f\star\psi_l(x)|\approx |g\star\psi_l(x)|\\
|f\star\psi_l(x)|, |g\star\psi_l(x)|\not\approx 0
\end{gather*}
Then, for some low-frequency function $h$:
\begin{equation*}
h.(f\star\psi_j)\approx g\star\psi_j
\end{equation*}
\end{thm*}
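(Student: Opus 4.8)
The plan is to exploit the defining feature of Cauchy wavelets that the author already flagged: the relations between the frequency bands $\hat f.\hat\psi_j$ can be expressed through holomorphic functions. For Cauchy wavelets $\hat\psi(\omega)\propto\omega^p e^{-\omega}\mathbf{1}_{\omega>0}$ (or similar), the product $\hat f.\hat\psi_j$ has a Fourier transform that extends holomorphically to the upper half-plane, because the positive-frequency support makes $f\star\psi_j$ the boundary value of an analytic function. So I would first write $F_l := f\star\psi_l$ and $G_l := g\star\psi_l$, each the boundary value of a function holomorphic in the upper half-plane $\H$, and translate the modulus hypothesis $|F_l(x)|\approx|G_l(x)|$ on an interval into a statement about these holomorphic functions.

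The key structural observation I would try to carry out is that consecutive bands are related by a fixed multiplier: since $\hat\psi_l=\hat\psi(a^l\cdot)$ with the Cauchy form, the ratio $\hat\psi_{l}/\hat\psi_{l+1}$ is (up to constants) a power of $\omega$ times an exponential, so $F_l$ and $F_{l+1}$ are linked by an explicit operator. Writing $u_l := \log|F_l|$, the approximate equalities $|F_l|\approx|G_l|$ for $l=j+1,\dots,j+K$ say $\log|F_l|\approx\log|G_l|$ on the interval, while the true phases differ by some function $\theta_l$. The goal is to show that the phase discrepancy $\theta_l$ is forced to vary slowly — in both $x$ and $l$ — so that passing down one octave to band $j$ multiplies $F_j$ by a slowly-varying (low-frequency) factor $h$. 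I would do this by using the holomorphy to recover the phase from the log-modulus: on the upper half-plane, $\log F_l = u_l + i v_l$ with $u_l,v_l$ harmonic conjugates, so $|F_l|\approx|G_l|$ controls the real parts and the Hilbert-transform relation controls the difference of imaginary parts (phases) up to a harmonic function with vanishing real part on the interval, which is exactly the mechanism producing a low-frequency phase.

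Concretely, I would set $h := (g\star\psi_j)/(f\star\psi_j)$ on the interval where $f\star\psi_j$ does not vanish, and the task reduces to proving $h$ is low-frequency, i.e. that $h$ is close to a function whose Fourier transform is concentrated near the origin. I expect the cleanest route is to use the explicit reconstruction formulas of Section~\ref{s:strong_stability_result} (which I may assume) expressing $f\star\psi_j$ in terms of the higher bands $f\star\psi_{j+1},\dots,f\star\psi_{j+K}$ via the holomorphic extension and the Cauchy-wavelet multiplier; substituting the approximate relations $|F_l|\approx|G_l|$ and $\arg G_l - \arg F_l = \theta_l$ into this formula, the dependence on band index $l$ averages the phase increments, and one reads off that $h$ inherits the regularity of the $\theta_l$. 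Propagating error bounds carefully, $\log h$ comes out as a Poisson-type integral of the controlled log-modulus differences, which is automatically smooth on the interior of the interval, hence low-frequency.

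The hard part will be controlling the boundary and the error terms: the holomorphic reconstruction of the phase from the modulus is only stable away from zeros of $F_l$, which is precisely why the hypothesis $|F_l|,|G_l|\not\approx 0$ is imposed, but quantifying ``$\not\approx 0$'' into a usable lower bound and showing the accumulated error over the $K$ bands does not destroy the low-frequency conclusion is delicate. In particular, the Hilbert/Poisson relations are global, so restricting everything to a finite interval introduces boundary contributions, and I would need either a localization argument (cutting off smoothly and estimating the tails) or to argue that the low-frequency nature of $h$ is a local statement robust to these tails. I anticipate this localization-versus-global-holomorphy tension, rather than the algebra of the Cauchy multiplier, to be the genuine obstacle, which is presumably why the theorem is stated only \emph{approximately} ($\approx$) rather than with sharp constants.
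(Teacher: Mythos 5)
There is a genuine gap, and it sits at the center of your argument. Your mechanism for controlling the phases is the harmonic-conjugate/Hilbert-transform relation: from $\log|F_l|\approx\log|G_l|$ on a line you want to conclude that $\arg F_l-\arg G_l$ is controlled ``up to a harmonic function with vanishing real part on the interval.'' But the conjugate-function relation between $\log|F_l|$ and $\arg F_l$ holds only for \emph{outer} functions. The holomorphic extension of $f\star\psi_l$ to the upper half-plane is $z\mapsto \frac{a^{pl}}{2}F(z+ia^l)$ with $F$ as in \eqref{eq:def_holomorphic_extension}, and this function generically has zeros in $\{\Im z>0\}$: the hypothesis $|f\star\psi_l|\not\approx 0$ is only a lower bound \emph{on the lines} $\Im z=a^l$ and only on an interval; it does not exclude zeros between the lines. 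Each such zero contributes a Blaschke factor, whose modulus is $1$ on the line (hence invisible in your log-modulus data) but whose phase rotates by $2\pi$ over a scale comparable to the distance of the zero to the line --- i.e.\ arbitrarily fast. So the ``harmonic function with vanishing real part'' you dismiss is precisely the argument of an inner function, and it is \emph{not} low-frequency; this is the very obstruction that makes phase retrieval from the modulus on a single line impossible (section \ref{s:unicity}), and your hypotheses do not remove it. The paper's proof is built to avoid exactly this trap: it never takes logarithms or arguments of the unknown functions. Instead it uses the fact that $|f^{(s)}\star\psi_l|^2$ determines, with no outerness assumption, the holomorphic product $z\mapsto F^{(s)}(z+ia^l)\overline{F^{(s)}(\overline z+ia^l)}$ (paragraph \ref{ss:main_principle}); it bounds the \emph{difference} of two such products by the harmonic-measure/three-lines lemmas of appendix \ref{app:bounds} (theorem \ref{thm:bounded_on_a_band}, lemmas \ref{lem:hole_on_one_line} and \ref{lem:holes_on_both_lines}, where zeros cost nothing because one only needs upper bounds on a subharmonic $\log|W|$), divides only at points where the hypotheses give lower bounds, and iterates band by band (appendices \ref{app:stability_dyadic_case} and \ref{app:stability_a_less_than_2}).

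The second gap is the source of ``low-frequency.'' You define $h=(g\star\psi_j)/(f\star\psi_j)$, which makes $h\cdot(f\star\psi_j)\approx g\star\psi_j$ trivially exact, so the whole theorem becomes the claim that this ratio is low-frequency; your justification (``$\log h$ is a Poisson-type integral, hence smooth, hence low-frequency'') again presupposes the outer representation and in any case is not where the conclusion comes from. In the paper, the low-frequency factor is identified concretely: the band-by-band induction yields $(\overline{f^{(1)}\star\psi_J})(f^{(1)}\star\psi_j)\approx(\overline{f^{(2)}\star\psi_J})(f^{(2)}\star\psi_j)$ for an explicit $J\in[j+K-1;j+K]$, so the phase discrepancy at band $j$ equals the phase discrepancy at band $J$, and the latter is low-frequency for the purely spectral reason that $f^{(s)}\star\psi_J$ has its spectrum concentrated at scale $a^{-J}\ll a^{-j}$ --- not because of any interior smoothness of a Poisson integral. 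Without this identification, which is exactly the quantitative content of theorems \ref{thm:stability_dyadic_case} and \ref{thm:stability_a_less_than_2}, your argument gives no control on $h$; and the step where you propose to ``assume'' the reconstruction formulas of section \ref{s:strong_stability_result} does not help, since those formulas are exact identities whose \emph{stability} under the hypotheses is the entire difficulty being addressed.
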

This implies that, if the modulus of the Fourier transform does not have too small values, all the instabilities of the reconstruction operator are of the form described in section \ref{s:non_uniform_continuity}.

\nl
Finally, in the section \ref{s:numerical_experiments}, we present an algorithm which exactly recovers a function from the modulus of its Cauchy wavelet transform (and a low-frequency component). This algorithm uses the explicit formulas derived in the section \ref{s:strong_stability_result}. It may fail when the wavelet transform is too close to zero at some points but otherwise it almost always succeeds. It does not get stuck into local minima, like most classical algorithms (for example \citet{gerchberg}), and it is stable to noise.


\subsection{Notations}

For any $f\in L^1(\R)$, we denote by $\hat f$ or $\mathcal{F}(f)$ the Fourier transform of $f$:
\begin{equation*}
\hat f(\omega)=\int_\R f(x)e^{-i\omega x}dx
\quad \forall \omega\in\R
\end{equation*}
We extend this definition to $L^2$ by continuity.

We denote by $\mathcal{F}^{-1}:L^2(\R)\to L^2(\R)$ the inverse Fourier transform and recall that, for any $f\in L^1\cap L^2(\R)$:
\begin{equation*}
\mathcal{F}^{-1}(f)(x)=\frac{1}{2\pi}\int_\R f(\omega)e^{i\omega x}d\omega
\end{equation*}

\nl
We denote by $\H$ the Poincaré half-plane: $\H=\{z\in\C\mbox{ s.t. }\Im z>0\}$.

\section{Unicity of the reconstruction for Cauchy wavelets\label{s:unicity}}

\subsection{Definition of the wavelet transform and comparison with Fourier\label{s:comparison}}

The most important phase retrieval problem, which naturally arises in several physical settings, is the case of the Fourier transform:
\begin{equation*}
\mbox{reconstruct }f\in L^2(\R)\mbox{ from }|\hat f|
\end{equation*}
Without additional assumptions over $f$, the reconstruction is clearly impossible: any choice of phase $\phi:\R\to\R$ yield a signal $g=\mathcal{F}^{-1}(|\hat f|e^{i\phi})\in L^2(\R)$ such that $|\hat g|=|\hat f|$.

To avoid this problem, one may for example require that $f$ is compactly supported. However, \citet{akutowicz_compact,walther} showed that, even with this constraint, the reconstruction was still not possible.

More precisely, their result is the following one. If $f\in L^2(\R)$ is a compactly supported function, then its Fourier transform $\hat f$ admits a holomorphic extension $F$ over all $\C$: $F(z)=\int_\R f(x)e^{-izx}dx$. If $g\in L^2(\R)$ is another compactly supported function and $G$ is this holomorphic extension of its Fourier transform, the equality $|\hat f|=|\hat g|$ happens to be equivalent to:
\begin{equation*}
\forall z\in\C,\quad
F(z)\overline{F(\overline{z})}=G(z)\overline{G(\overline{z})}
\end{equation*}
This in turn is essentially equivalent to:
\begin{equation}\label{eq:equality_zeros}
\{z_n\}\cup\{\overline{z}_n\}=\{z'_n\}\cup\{\overline{z}'_n\}
\end{equation}
where the $(z_n)$ and $(z'_n)$ are the respective zeros of $F$ and $G$ over $\C$, counted with multiplicity. This means that $F$ and $G$ must have the same zeros, up to symmetry with respect to the real axis.

Conversely, for every choice of $\{z'_n\}$ satisfying \eqref{eq:equality_zeros}, it is possible to find a compactly supported $g$ such that the zeroes of $G$ are the $z'_n$, which implies $|\hat f|=|\hat g|$.


A similar result can be established in the case where the function $f\in L^2(\R)$ is assumed to be identically zero on the negative real line \citep{akutowicz} instead of compactly supported.

\nl
Let us know define the wavelet transform and compare it with the Fourier transform.

Let $\psi\in L^1\cap L^2(\R)$ be a wavelet, that is a function such that $\int_\R\psi(x)dx=0$. Let $a>1$ be fixed; we call $a$ the \textit{dilation factor}. We define a family of wavelets by:
\begin{equation*}
\forall x\in\R\quad
\psi_j(x)=a^{-j}\psi(a^{-j}x)
\quad\quad
\Leftrightarrow
\quad\quad
\forall \omega\in\R\quad
\hat\psi_j(\omega)=\hat\psi(a^j\omega)
\end{equation*}
The wavelet transform operator is:
\begin{equation*}
f\in L^2(\R)\to\{f\star\psi_j\}_{j\in\Z}\in (L^2(\R))^\Z
\end{equation*}
This operator is unitary if the so-called Littlewood-Paley condition is satisfied:
\begin{equation}\label{eq:littlewood_paley}
\left(\underset{j}{\sum}|\hat\psi_j(\omega)|^2=1,\forall\omega\in\R\right)
\quad\quad\Rightarrow
\quad\quad
\left(||f||_2^2=\underset{j}{\sum}||f\star\psi_j||_2^2\quad\forall f\in L^2(\R)\right)
\end{equation}

The phase retrieval problem associated with this operator is:
\begin{equation*}
\mbox{reconstruct }f\in L^2(\R)\mbox{ from }\{|f\star\psi_j|\}_{j\in\Z}
\end{equation*}

\nl

This problem may or may not be well-posed, depending on which wavelet family we use.

The simplest case is the one where the wavelets are Shannon wavelets:
\begin{equation*}
\hat\psi=1_{[1;a]}\hskip 1cm
\Rightarrow
\hskip 1cm\forall j\in\Z,\quad
\hat \psi_j(\omega)=1_{[a^{-j};a^{-j+1}]}
\end{equation*}
Reconstructing $f$ amounts to reconstruct $\hat f1_{[a^{-j};a^{-j+1}]}=\hat f\hat\psi_j$ for all $j$. For each $j$, we have only two informations about $\hat f\hat\psi_j$: its support is included in $[a^{-j};a^{-j+1}]$ and the modulus of its inverse Fourier transform is $|f\star\psi_j|$. From the results of the Fourier transform case, it is not enough to determine uniquely $\hat f\hat\psi_j$. Thus, for Shannon wavelets, the phase retrieval problem is as ill-posed as for the Fourier transform.

In this example, the problem comes from the fact that the $\hat\psi_j$ have non-overlapping supports. Thus, reconstructing $f$ is equivalent to reconstructing independantly each $f\star\psi_j$, which is not possible.

\nl

However, in general, the $\hat\psi_j$ have overlapping supports and the $f\star\psi_j$ are not independent for different values of $j$. They satisfy the following relation:
\begin{equation}\label{eq:convolution_commutes}
(f\star\psi_j)\star\psi_k=(f\star\psi_k)\star\psi_j
\quad\forall j,k\in\Z
\end{equation}
Thus, there is ``redundancy'' in the wavelet decomposition of $f$. We can hope that this redundancy compensates the loss of phase of $|f\star\psi_j|$. In the following, we show that, at least for specific wavelets, it is the case.

\subsection{Unicity theorem for Cauchy wavelets\label{ss:unicity_theorem}}

In this paragraph, we consider wavelets of the following form:
\begin{gather}\label{eq:def_wavelets}
\hat\psi(\omega)=\rho(\omega)\omega^pe^{-\omega}1_{\omega>0}\\
\hat\psi_j(\omega)=\hat\psi(a^j\omega)
\quad \forall \omega\in\R
\nonumber
\end{gather}
where $p> 0$ and $\rho\in L^\infty(\R)$ is such that $\rho(a\omega)=\rho(\omega)$ for almost every $\omega\in\R$ and $\rho(\omega)\ne 0,\forall\omega$.

The presence of $\rho$ allows some flexibility in the choice of the family. In particular, if it is properly chosen, the Littlewood-Paley condition \eqref{eq:littlewood_paley} may be satisfied. However, the proofs are the same with or without $\rho$.

\nl
When $\rho=1$, the wavelets of the form \eqref{eq:def_wavelets} are called \textit{Cauchy wavelets of order $p$}. The figure \ref{fig:cauchy_wavelets} displays an example of such wavelets. For these wavelets, the wavelet transform has the property to be a set of sections of a holomorphic function along horizontal lines.

If $f\in L^2(\R)$, its analytic part $f_+$ is defined by:
\begin{equation}\label{eq:def_analytic_part}
\hat f_+(\omega)=2\hat f(\omega)1_{\omega>0}
\end{equation}
We define:
\begin{equation}\label{eq:def_holomorphic_extension}
F(z)=\frac{1}{2\pi}\int_\R \omega^p\hat f_+(\omega)e^{i\omega z}d\omega
\quad\quad
\forall z\mbox{ s.t. }\Im z>0
\end{equation}
When $f_+$ is sufficiently regular, $F$ is the holomorphic extension of its $p$-th derivative.

For each $y>0$, if we denote by $F(.+iy)$ the function $x\in\R\to F(x+iy)$:
\begin{equation*}
F(.+iy)=\mathcal{F}^{-1}\left(2\omega^p\hat f(\omega)1_{\omega>0}e^{-y\omega} \right)
\end{equation*}
Consequently, for each $j\in\Z$:
\begin{equation}
\frac{a^{pj}}{2}F(.+ia^j)=f\star\psi_j\quad\forall j\in\Z
\label{eq:F_f_star_psi}
\end{equation}
So $f\star\psi_j$ is the restriction of $F$ to the horizontal line $\R+ia^j$. In this case, the relation \eqref{eq:convolution_commutes} is equivalent to the fact that, for all $j,k$, $f\star\psi_j$ and $f\star\psi_k$ are the restrictions of the \emph{same} holomorphic function to the lines $\R+ia^j$ and $\R+ia^k$.

Reconstructing $f_+$ from $\{|f\star\psi_j|\}_{j\in\Z}$ now amounts to reconstruct the holomorphic function $F:\H=\{z\in\C,\Im z>0\}\to\C$ from its modulus on an infinite set of horizontal lines. The figure \ref{fig:lines} shows these lines for $a=2$. Our phase retrieval problem thus reduces to a harmonic analysis problem. Actually, knowing $|F|$ on only two lines is already enough to recover $F$ and one of the two lines may even be $\R$, the boundary of $\H$.

\begin{figure}
\begin{minipage}[b]{0.45\textwidth}
\psfrag{x}[t][b]{$\omega$}
\includegraphics[width=\textwidth]{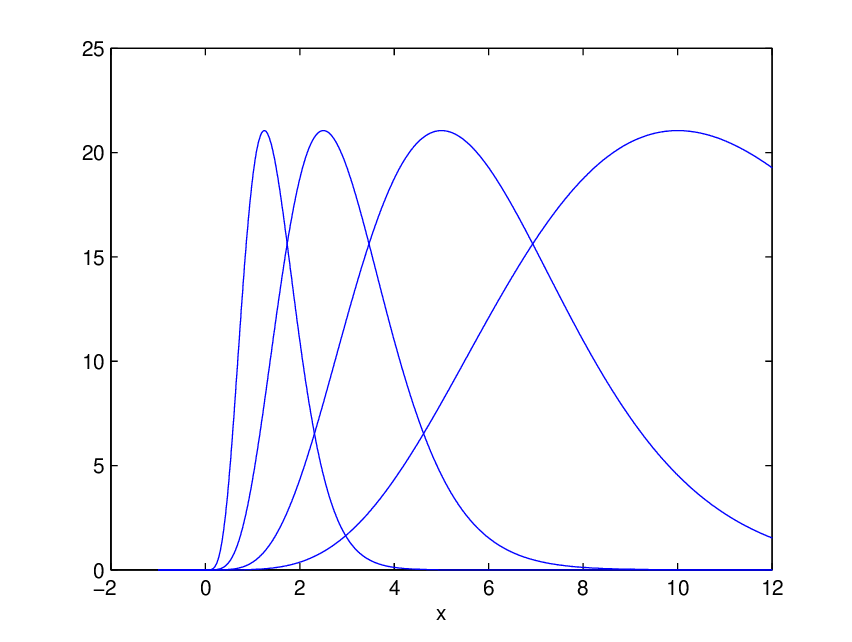}
\caption{\label{fig:cauchy_wavelets}Cauchy wavelets of order $p=5$ for $j=2,1,0,-1$, $a=2$}
\end{minipage}
\hskip 0.3cm
\begin{minipage}[b]{0.45\textwidth}
\psfrag{r}[t][b]{\small Real line}
\psfrag{ir}[b][t]{\small Imaginary line}
\includegraphics[width=\textwidth]{images_modules/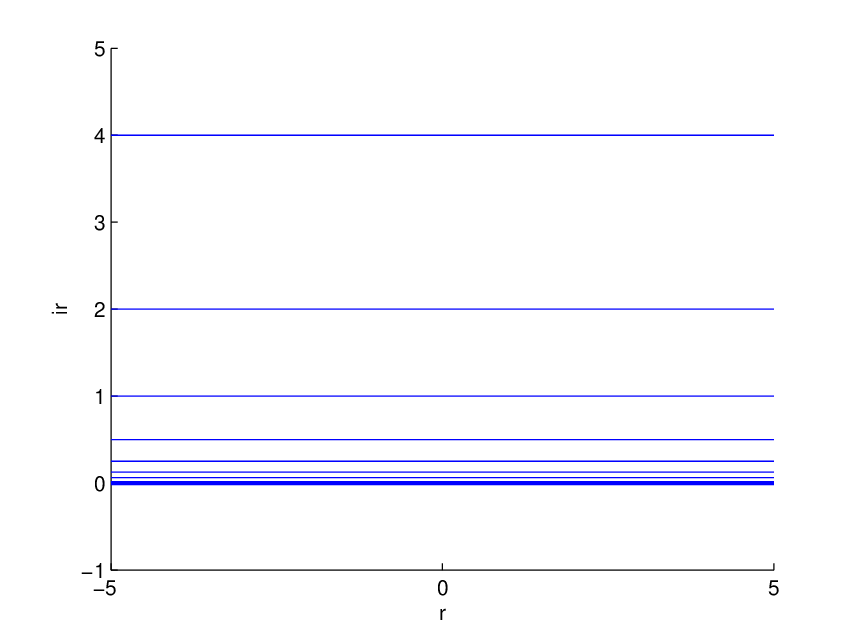}
\caption{\label{fig:lines}Lines in $\C$ over which $|F|$ is known (for $a=2$)}
\end{minipage}
\end{figure}

\begin{thm}\label{thm:unicity_hol}
Let $\alpha>0$ be fixed. Let $F,G:\H\to\C$ be holomorphic functions such that, for some $M>0$:
\begin{equation}\label{eq:L2_uniform}
\int_\R|F(x+iy)|^2dx<M
\quad\mbox{and}\quad
\int_\R|G(x+iy)|^2dx<M
\quad\quad
\forall y>0
\end{equation}
We suppose that:
\begin{gather*}
|F(x+i\alpha)|=|G(x+i\alpha)|\mbox{ for a.e. }x\in\R\\
\underset{y\to 0^+}{\lim}|F(x+iy)|=\underset{y\to 0^+}{\lim}|G(x+iy)|
\mbox{ for a.e. }x\in\R
\end{gather*}

Then, for some $\phi\in\R$:
\begin{equation}\label{eq:unicity_hol}
F=e^{i\phi} G
\end{equation}
\end{thm}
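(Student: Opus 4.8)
The plan is to work with the single holomorphic quotient $J=F/G$ and to show that the two modulus hypotheses force $J$ to be a unimodular constant, which is exactly \eqref{eq:unicity_hol}. First I would record that the uniform bound \eqref{eq:L2_uniform} says precisely that $F$ and $G$ belong to the Hardy space $H^2(\H)$: they therefore have nontangential boundary values on $\R$, lie in the Nevanlinna class, and their zero sets $\{z_n\}$ obey the Blaschke condition $\sum_n \tfrac{\Im z_n}{1+|z_n|^2}<\infty$. Since $G\not\equiv 0$, the ratio $J=F/G$ is a well-defined meromorphic function on $\H$, and the whole problem becomes: show $J$ is constant of modulus one.

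The next step is to convert the two modulus equalities into reflection identities. Writing $F^\#(\zeta):=\overline{F(\bar\zeta)}$, which is holomorphic wherever $F(\bar\zeta)$ is, the function $F(z)F^\#(z-2i\alpha)-G(z)G^\#(z-2i\alpha)$ is holomorphic in the strip $0<\Im z<2\alpha$ and, on the central line $\Im z=\alpha$, equals $|F(x+i\alpha)|^2-|G(x+i\alpha)|^2=0$; hence it vanishes identically, which rearranges to the Schwarz-type relation $J(z)=1/\overline{J(\bar z+2i\alpha)}$. The boundary equality $\lim_{y\to0^+}|F|=\lim_{y\to0^+}|G|$ gives in the same manner $J(z)=1/\overline{J(\bar z)}$. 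Composing the two relations yields the periodicity $J(z+2i\alpha)=J(z)$, and together they show $|J|=1$ on every line $\Im z=k\alpha$, $k\in\Z$; moreover the reflections extend $J$ meromorphically to all of $\C$.

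I would then rule out poles using periodicity against the Blaschke condition. If $J$ had a pole at some $z_0\in\H$ (i.e.\ $G$ vanished there to higher order than $F$), periodicity would produce poles at every $z_0+2ik\alpha$, $k\ge0$, forcing $G$ to vanish along a vertical progression tending to $i\infty$; but $\sum_k \tfrac{\Im(z_0+2ik\alpha)}{1+|z_0+2ik\alpha|^2}$ diverges, contradicting the Blaschke condition for $G\in H^2(\H)$. The identical argument applied to $1/J=G/F$ rules out zeros. Hence $J$ is holomorphic and zero-free on $\C$, so $J=e^{g}$ for an entire $g$, and $u:=\Re g=\log|J|$ is harmonic on $\C$, periodic with purely imaginary period $2i\alpha$, and identically zero on the lines $\Im z=k\alpha$.

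The final step, which I expect to be the hard part, is to show $u\equiv 0$. Separation of variables shows that harmonic functions with these symmetries and zero boundary data are spanned by terms such as $\sinh(\pi x/\alpha)\sin(\pi y/\alpha)$, which are nonconstant and grow exponentially as $\Re z\to\pm\infty$; so periodicity alone is not enough, and the $L^2$ control must be used decisively. The mechanism is that $F=JG$ with both factors in $H^2(\H)$: if $|J|=e^{u}$ grew exponentially in $|x|$ along some horizontal line, then on that line one of $F,G$ would have to decay super-exponentially while remaining a nonzero Nevanlinna function, contradicting the log-integrability $\int_\R \tfrac{|\log|G(x+iy)||}{1+x^2}\,dx<\infty$ enjoyed by $H^2$ functions. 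This kills the exponential modes of $u$, leaving $u\equiv0$, so $|J|\equiv 1$ and $J\equiv e^{i\phi}$ for some $\phi\in\R$, i.e.\ $F=e^{i\phi}G$. Making this growth dichotomy quantitative, and carefully handling the common zeros of $F$ and $G$ where $J$ extends only by removability, is where the genuine work sits; everything upstream is comparatively soft.
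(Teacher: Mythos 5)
Your first reflection step is sound: $F(z)\,\overline{F(\bar z+2i\alpha)}$ and $G(z)\,\overline{G(\bar z+2i\alpha)}$ are both holomorphic on the strip $0<\Im z<2\alpha$ and agree a.e.\ on the interior line $\Im z=\alpha$, so they agree identically. The gap is the claim that the boundary hypothesis gives $J(z)=1/\overline{J(\bar z)}$ ``in the same manner'' and that ``the reflections extend $J$ meromorphically to all of $\C$.'' There is no same manner: the would-be function $F(z)\overline{F(\bar z)}$ multiplies a function holomorphic on $\H$ by one holomorphic on the lower half-plane, so it is holomorphic nowhere and the identity theorem cannot be invoked. What the hypothesis $\lim_{y\to0^+}|F|=\lim_{y\to0^+}|G|$ actually yields is only that the nontangential boundary values satisfy $|J^*|=1$ a.e.\ on $\R$ — a \emph{pseudocontinuation} relation, which is strictly weaker than analytic continuation across $\R$. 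The obstruction is exactly the singular inner factors: if $F=SG$ with $S$ a singular inner function of $\H$ (e.g.\ $S(z)=e^{-i/z}$, built from a point mass at $0$), then $|F^*|=|G^*|$ a.e.\ on $\R$, yet $J=S$ admits no meromorphic continuation across the support of the singular measure, is not $2i\alpha$-periodic, and is not constant. Such an $F$ violates your (correct) interior-line identity, so the theorem survives — but your argument never confronts this case: the periodicity $J(z+2i\alpha)=J(z)$, the entire non-vanishing extension, and the harmonic-function endgame all rest on the unjustified reflection across $\R$, and they collapse with it.

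This is precisely the difficulty the paper's proof is organized around. It invokes the canonical factorization (lemma \ref{lem:factorization}) of $F$, $G$, $F(\cdot+i\alpha)$, $G(\cdot+i\alpha)$ into Blaschke, outer, and singular inner parts: the boundary hypothesis identifies only the \emph{outer} parts ($D_F=D_G$); the legitimate reflection across $\R$ is carried by the Blaschke factors themselves (each factor $\frac{z-z_k}{z-\overline{z}_k}$ pairs a zero with its mirror pole); and two dedicated lemmas (\ref{lem:prolongation} and \ref{lem:S_1}, the latter resting on lemma \ref{lem_app:dE_zero}) are needed to show the singular parts are trivial. Once that machinery is in place, the paper's conclusion is reached by exactly the mechanism you propose for excluding poles and zeros: a nonzero value of $\mu_F-\mu_G$ would propagate along the vertical progression $z+2ni\alpha$ and violate the Blaschke condition \eqref{eq:condition_convergence}. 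So your instinct about the core of the argument (two reflections compose to a vertical translation, which the Blaschke condition forbids for zeros) matches the paper; the missing idea is how to justify the reflection across the boundary line, and filling that hole essentially forces the factorization approach. Note also that with the factorization the paper needs no separation-of-variables endgame: it obtains $F(z+i\alpha)=e^{i\gamma+i\delta z}G(z+i\alpha)$ directly and kills $\delta$ with the boundary moduli.
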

The proof is given in section \ref{ss:dem}.

\begin{cor}\label{cor:unicity_wavelets}
We consider wavelets $(\psi_j)_{j\in\Z}$ of the form \eqref{eq:def_wavelets}. Let $f,g\in L^2(\R)$ be such that, for some $j,k\in\Z$ with $j\ne k$:
\begin{equation}\label{eq:equality_moduli}
|f\star\psi_j|=|g\star\psi_j|
\quad\mbox{and}\quad
|f\star\psi_k|=|g\star\psi_k|
\end{equation}
We denote by $f_+$ and $g_+$ the analytic parts of $f$ and $g$ (as defined in \eqref{eq:def_analytic_part})

There exists $\phi\in\R$ such that:
\begin{equation}
f_+=e^{i\phi} g_+
\label{eq:equality_pos_freq}
\end{equation}
\end{cor}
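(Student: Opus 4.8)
The plan is to reduce the corollary to Theorem \ref{thm:unicity_hol} by attaching to $f$ and $g$ holomorphic functions on $\H$ whose restrictions to horizontal lines recover the wavelet transform, and then exploiting the two lines $\R+ia^j$ and $\R+ia^k$ that the hypotheses \eqref{eq:equality_moduli} provide. Mimicking \eqref{eq:def_holomorphic_extension} but keeping the factor $\rho$, I would set
\[
F(z)=\frac{1}{2\pi}\int_\R \rho(\omega)\omega^p\hat f_+(\omega)e^{i\omega z}\,d\omega,\qquad \Im z>0,
\]
and define $G$ analogously from $g_+$. Since $\hat f_+$ is supported on $\{\omega>0\}$ and lies in $L^2$, and $\rho\in L^\infty$, the integrand decays like $e^{-y\omega}$ for $\Im z=y>0$, so $F$ is holomorphic on $\H$. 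Using the dilation invariance $\rho(a^j\omega)=\rho(\omega)$, the computation leading to \eqref{eq:F_f_star_psi} carries over verbatim and gives $f\star\psi_j=\frac{a^{pj}}{2}F(\cdot+ia^j)$ for every $j$, and likewise $g\star\psi_j=\frac{a^{pj}}{2}G(\cdot+ia^j)$.

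Consequently $|F(\cdot+ia^j)|=\frac{2}{a^{pj}}|f\star\psi_j|$ and $|G(\cdot+ia^j)|=\frac{2}{a^{pj}}|g\star\psi_j|$, so the two equalities in \eqref{eq:equality_moduli} translate exactly into $|F|=|G|$ on each of the lines $\R+ia^j$ and $\R+ia^k$. The obstacle is that Theorem \ref{thm:unicity_hol} wants the modulus on one interior line $\R+i\alpha$ \emph{and} on the boundary $\R$, whereas here I control two interior lines; moreover the uniform $L^2$ bound \eqref{eq:L2_uniform} may fail for $F$ as $y\to0^+$, because $\omega^p\hat f_+$ need not be square-integrable. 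I would resolve both difficulties at once by a vertical shift.

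Assume without loss of generality $j<k$, so $a^j<a^k$, and set $\tilde F(z)=F(z+ia^j)$ and $\tilde G(z)=G(z+ia^j)$. These are holomorphic on $\{\Im z>-a^j\}\supset\overline{\H}$, hence continuous up to the line $\R$. For $y>0$, Plancherel gives
\[
\int_\R|\tilde F(x+iy)|^2\,dx=\frac{1}{2\pi}\int_0^{\infty}\bigl|\rho(\omega)\omega^p\hat f_+(\omega)\bigr|^2 e^{-2(y+a^j)\omega}\,d\omega\le \frac{\|\rho\|_\infty^2}{2\pi}\Bigl(\sup_{\omega>0}\omega^{2p}e^{-2a^j\omega}\Bigr)\int_0^{\infty}|\hat f_+(\omega)|^2\,d\omega,
\]
which is finite and independent of $y$ because $\omega^{2p}e^{-2a^j\omega}$ is bounded on $(0,\infty)$; the same holds for $\tilde G$, so \eqref{eq:L2_uniform} is satisfied. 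Now the boundary line of $\tilde F$ is $\R+ia^j$ and the line at height $\alpha:=a^k-a^j>0$ is $\R+ia^k$. Continuity up to $\R$ yields $\lim_{y\to0^+}|\tilde F(x+iy)|=|F(x+ia^j)|=|G(x+ia^j)|=\lim_{y\to0^+}|\tilde G(x+iy)|$ for a.e.\ $x$, while $|\tilde F(\cdot+i\alpha)|=|F(\cdot+ia^k)|=|G(\cdot+ia^k)|=|\tilde G(\cdot+i\alpha)|$.

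Theorem \ref{thm:unicity_hol} then gives $\tilde F=e^{i\phi}\tilde G$ on $\H$ for some $\phi\in\R$, i.e.\ $F=e^{i\phi}G$ on $\{\Im z>a^j\}$, and the identity theorem for holomorphic functions extends this to all of $\H$. Returning to the definition of $F$ and $G$ and taking Fourier transforms in $x$, for each $y>0$ one obtains $\rho(\omega)\omega^p\hat f_+(\omega)e^{-y\omega}=e^{i\phi}\rho(\omega)\omega^p\hat g_+(\omega)e^{-y\omega}$ for a.e.\ $\omega$. Since $\rho(\omega)\neq0$ by assumption and $\omega^p>0$ on the support $\{\omega>0\}$ of $\hat f_+$ and $\hat g_+$, these nonvanishing factors can be divided out, leaving $\hat f_+=e^{i\phi}\hat g_+$, that is $f_+=e^{i\phi}g_+$, which is precisely \eqref{eq:equality_pos_freq}. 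The only genuinely delicate point is the shift step justifying \eqref{eq:L2_uniform}; everything else is bookkeeping around \eqref{eq:F_f_star_psi} and Theorem \ref{thm:unicity_hol}.
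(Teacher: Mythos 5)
Your proof is correct and follows essentially the same route as the paper: define $F,G$ with the factor $\rho$, shift by $ia^j$ so that the hypotheses become a boundary-modulus equality on $\R$ plus an interior-line equality at height $\alpha=a^k-a^j$, verify the uniform bound \eqref{eq:L2_uniform}, and apply Theorem \ref{thm:unicity_hol} before dividing out the nonvanishing factor $\rho(\omega)\omega^pe^{-y\omega}$. The only cosmetic difference is your choice of constant in \eqref{eq:L2_uniform} (via $\sup_{\omega>0}\omega^{2p}e^{-2a^j\omega}$ and $\|\hat f_+\|_2$, where the paper uses $\left(\tfrac{2}{a^{jp}}\right)^2\|f\star\psi_j\|_2^2$), and your write-up is in fact more explicit than the paper's about the shift, the identity-theorem extension, and the final division step.
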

\begin{proof}
We may assume that $j<k$. We define $F$ and $G$ as in \eqref{eq:def_holomorphic_extension}, with the additional $\rho$:
\begin{equation*}
F(z)=\frac{1}{2\pi}\int_\R\omega^p\rho(\omega)\hat f_+(\omega)e^{i\omega z}d\omega
\quad\quad
G(z)=\frac{1}{2\pi}\int_\R\omega^p\rho(\omega)\hat g_+(\omega)e^{i\omega z}d\omega
\quad\quad
\forall z\in\H
\end{equation*}
For each $y>0$, $F(.+iy)=\mathcal{F}^{-1}(2\omega^p\rho(\omega)e^{-y\omega}1_{\omega>0}\hat f(\omega))$. For $y=a^j$ and $y=a^k$, it implies $F(.+ia^j)=\frac{2}{a^{jp}}f\star\psi_j$ and $F(.+ia^k)=\frac{2}{a^{kp}}f\star\psi_k$. From \eqref{eq:equality_moduli}:
\begin{gather*}
|F(.+ia^j)|=\frac{2}{a^{pj}}|f\star\psi_j|=\frac{2}{a^{pj}}|g\star\psi_j|=|G(.+ia^j)|\\
|F(.+ia^k)|=\frac{2}{a^{pk}}|f\star\psi_k|=\frac{2}{a^{pk}}|g\star\psi_k|=|G(.+ia^k)|
\end{gather*}
So the functions $F(.+ia^j)$ and $G(.+ia^j)$ coincide in modulus on two horizontal lines: $\R$ and $\R+i(a^k-a^j)$. From theorem \ref{thm:unicity_hol}, they are equal up to a global phase. As $\rho$ does not vanish, it implies that $f_+$ and $g_+$ are equal up to this global phase.

So that we can apply theorem \ref{thm:unicity_hol}, we must verify that the condition \eqref{eq:L2_uniform} holds for $F(.+ia^j)$ and $G(.+ia^j)$. For any $y>a^j$:
\begin{align*}
F(.+iy)&=\mathcal{F}^{-1}\left(2\omega^p\rho(\omega)\hat f(\omega)e^{-y\omega}\right)\\
\Rightarrow \quad||F(.+iy)||_2^2&=\frac{1}{2\pi}||2\omega^p\rho(\omega)\hat f(\omega)e^{-y\omega}1_{\omega\geq 0}||_2^2\\
&\leq\frac{1}{2\pi}||2\omega^p\rho(\omega)\hat f(\omega)e^{-a^j\omega}1_{\omega\geq 0}||_2^2\\
&=\left(\frac{2}{a^{jp}}\right)^2||f\star\psi_j||_2^2
\end{align*}
The same inequality holds for $G$: the condition \eqref{eq:L2_uniform} is true for $M=\left(\frac{2}{a^{jp}}\right)^2||f\star\psi_j||_2^2$.
\end{proof}

We have just proved that the modulus of the wavelet transform uniquely determines, up to a global phase, the analytic part of a function, that is its positive frequencies. On the contrary, as wavelets are analytic ($\hat \psi_j(\omega)=0$ if $\omega<0$), the wavelet transform contains no information about the negative frequencies. In practice, signals are often real so negative frequencies are determined by positive ones and this latter limitation is not really important.
\begin{cor}
Let $f,g\in L^2(\R)$ be real-valued functions; $f_+$ and $g_+$ are their analytic parts. We assume that, for some $j,k\in\Z$ such that $j\ne k$:
\begin{equation*}
|f\star\psi_j|=|g\star\psi_j|\quad\mbox{and}\quad
|f\star\psi_k|=|g\star\psi_k|
\end{equation*}
Then, for some $\phi\in\R$:
\begin{equation*}
f_+=e^{i\phi}g_+\quad
\Leftrightarrow
\quad
f=\Re(e^{i\phi}g_+)
\end{equation*}
\end{cor}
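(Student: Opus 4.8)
The plan is to treat this as a purely formal consequence of Corollary \ref{cor:unicity_wavelets} together with the elementary fact that, on real-valued signals, passing to the analytic part and taking the real part are mutually inverse operations. The hypotheses are identical to those of Corollary \ref{cor:unicity_wavelets}, which already supplies a $\phi\in\R$ with $f_+=e^{i\phi}g_+$; so the only genuinely new content is the stated equivalence, which I would prove for \emph{arbitrary} real-valued $f,g$ and arbitrary $\phi$, independently of the modulus hypotheses.

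First I would record the two Fourier identities that do all the work. Since $f$ is real-valued, its Fourier transform obeys the Hermitian symmetry $\hat f(-\omega)=\overline{\hat f(\omega)}$. Combining this with the definition \eqref{eq:def_analytic_part} and a direct computation of $\widehat{\overline{f_+}}(\omega)=\overline{\hat f_+(-\omega)}$, one checks that $\widehat{\Re(f_+)}=\tfrac12\bigl(\hat f_+ + \overline{\hat f_+(-\cdot)}\bigr)$ agrees with $\hat f$ on both half-lines, hence
\[
\Re(f_+)=f .
\]
Conversely, for any $h\in L^2(\R)$ whose Fourier transform is supported in $\{\omega>0\}$, the same computation applied to the real-valued signal $\Re(h)$ shows that $(\Re h)_+=h$: the negative-frequency part of $\Re(h)$ is the conjugate reflection of the positive one, and the factor $2$ in \eqref{eq:def_analytic_part} exactly recovers $\hat h$. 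Thus $f\mapsto f_+$ and $h\mapsto\Re(h)$ are inverse bijections between real-valued $L^2$ signals and $L^2$ signals with spectrum in $\{\omega>0\}$.

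With these in hand the equivalence is immediate. For the direction $\Rightarrow$, assuming $f_+=e^{i\phi}g_+$ I would take real parts and use $\Re(f_+)=f$ to get $f=\Re(e^{i\phi}g_+)$. For $\Leftarrow$, assuming $f=\Re(e^{i\phi}g_+)$ I would observe that $h:=e^{i\phi}g_+$ is analytic, since multiplying by the unimodular constant $e^{i\phi}$ does not change the support $\hat g_+\subset\{\omega>0\}$; so by the second identity $f_+=(\Re h)_+=h=e^{i\phi}g_+$. Finally, Corollary \ref{cor:unicity_wavelets} provides a $\phi$ for which the left-hand side $f_+=e^{i\phi}g_+$ holds, and through the equivalence the right-hand side $f=\Re(e^{i\phi}g_+)$ holds for that same $\phi$.

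I do not expect any real obstacle here: every step is an elementary Fourier-transform manipulation, and the genuine difficulty has already been discharged in Corollary \ref{cor:unicity_wavelets}. The only points requiring minor care are that all identities hold merely almost everywhere, that the single frequency $\omega=0$ is negligible in $L^2$ and can be ignored throughout, and that one keeps track of the normalising factor $2$ in \eqref{eq:def_analytic_part} so that $\Re$ and $(\cdot)_+$ really do invert each other.
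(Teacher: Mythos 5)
Your proof is correct and takes essentially the same route the paper intends: the paper states this corollary without a proof, as an immediate consequence of Corollary \ref{cor:unicity_wavelets} together with the fact that $f\mapsto f_+$ and $h\mapsto\Re h$ are mutually inverse maps between real-valued $L^2$ signals and $L^2$ signals with spectrum in $\{\omega>0\}$, which is exactly the equivalence you verify. Your handling of the details (Hermitian symmetry, the factor $2$, and the negligibility of $\omega=0$) is accurate, so nothing is missing.
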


\begin{rem}
Although the corollary \ref{cor:unicity_wavelets} holds for only two wavelets and does not require $|f\star\psi_s|=|g\star\psi_s|$ for each $s\in\Z$, the reconstruction of $f$ from only two components, $|f\star\psi_j|$ and $|f\star\psi_k|$, is very unstable in practice. Indeed, $\hat\psi_j$ and $\hat\psi_k$ are concentrated around characteristic frequencies of order $2^{-j}$ and $2^{-k}$. Thus, from $f\star\psi_j$ and $f\star\psi_k$ (and even more so from $|f\star\psi_j|$ and $|f\star\psi_k|$), reconstructing the frequencies of $f$ which are not close to $2^{-j}$ or $2^{-k}$ is numerically impossible. It is an ill-conditioned deconvolution problem.
\end{rem}

Before ending this section, let us note that, with a proof similar to the one of the corollary \ref{cor:unicity_wavelets}, the theorem \ref{thm:unicity_hol} also implies the following result.
\begin{cor}\label{cor:unicity_alpha}
Let $\alpha>0$ be fixed. Let $f,g\in L^2(\R)$ be such that $f(\omega)=g(\omega)=0$ for every $\omega<0$.

If $|\hat f|=|\hat g|$ and $|\widehat {f(t)e^{-\alpha t}}|=|\widehat {g(t)e^{-\alpha t}}|$, then, for some $\phi\in\R$:
\begin{equation*}
f=e^{i\phi} g
\end{equation*}
\end{cor}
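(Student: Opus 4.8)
The plan is to mimic the proof of Corollary~\ref{cor:unicity_wavelets}, reducing the statement to Theorem~\ref{thm:unicity_hol} by attaching to $f$ and $g$ holomorphic functions on $\H$ whose restrictions to the lines $\R$ and $\R+i\alpha$ carry exactly the two pieces of modulus information furnished by the hypotheses. The only conceptual difference with Corollary~\ref{cor:unicity_wavelets} is that here it is $f$ itself, rather than $\hat f$, that vanishes on the negative half-line, so the roles of time and frequency are exchanged.

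First I would introduce, for $z\in\H$,
\[
F(z)=\int_0^{+\infty}f(t)e^{izt}\,dt,\qquad G(z)=\int_0^{+\infty}g(t)e^{izt}\,dt .
\]
Since $f$ and $g$ vanish on $]-\infty;0[$ and since $z=x+iy$ with $y>0$ forces $|e^{izt}|=e^{-yt}$ to decay on $t>0$, these integrals converge absolutely (by Cauchy--Schwarz) and define holomorphic functions on $\H$. For a fixed $y>0$, viewed as a function of $x$, $F(\cdot+iy)$ is the Fourier transform of $t\mapsto f(t)e^{-yt}1_{t>0}$ evaluated at $-x$; hence by Plancherel $\int_\R|F(x+iy)|^2dx=2\pi\,\|f(t)e^{-yt}1_{t>0}\|_2^2\le 2\pi\,\|f\|_2^2$, uniformly in $y>0$, and likewise for $G$. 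This establishes the uniform $L^2$ bound~\eqref{eq:L2_uniform}, so that $F$ and $G$ belong to the Hardy space $H^2(\H)$.

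Next I would match the two lines. On the line $\R+i\alpha$ one has $F(x+i\alpha)=\widehat{f(t)e^{-\alpha t}}(-x)$ and $G(x+i\alpha)=\widehat{g(t)e^{-\alpha t}}(-x)$, so the assumption $|\widehat{f(t)e^{-\alpha t}}|=|\widehat{g(t)e^{-\alpha t}}|$ gives $|F(x+i\alpha)|=|G(x+i\alpha)|$ for a.e.\ $x$. For the boundary $\R$ I would use that $t\mapsto f(t)e^{-yt}1_{t>0}$ tends to $f$ in $L^2$ as $y\to0^+$, whence $F(\cdot+iy)$ converges in $L^2$ to the function $x\mapsto\hat f(-x)$; combined with $H^2$-membership this identifies the nontangential boundary trace of $F$ as $x\mapsto\hat f(-x)$ and that of $G$ as $x\mapsto\hat g(-x)$. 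The hypothesis $|\hat f|=|\hat g|$ then yields $\lim_{y\to0^+}|F(x+iy)|=\lim_{y\to0^+}|G(x+iy)|$ for a.e.\ $x$. All the hypotheses of Theorem~\ref{thm:unicity_hol} now hold, so $F=e^{i\phi}G$ for some $\phi\in\R$; taking boundary values gives $\hat f(-x)=e^{i\phi}\hat g(-x)$ a.e., hence $\hat f=e^{i\phi}\hat g$, and injectivity of the Fourier transform yields $f=e^{i\phi}g$.

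The only genuinely delicate point is the boundary identification in the previous step: one must guarantee that the radial limit $\lim_{y\to0^+}F(x+iy)$ exists for a.e.\ $x$ and equals $\hat f(-x)$, so that the second hypothesis of Theorem~\ref{thm:unicity_hol} is literally satisfied rather than merely satisfied in an $L^2$ sense. This is exactly the Paley--Wiener description of $H^2(\H)$ together with the a.e.\ existence of nontangential boundary values of Hardy-space functions, and it is precisely the uniform bound~\eqref{eq:L2_uniform} that places $F$ and $G$ in $H^2(\H)$ to begin with. Everything else is a routine change of variables between time and frequency, entirely parallel to Corollary~\ref{cor:unicity_wavelets}.
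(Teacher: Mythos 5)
Your proposal is correct and follows exactly the route the paper intends: the paper gives no detailed proof of this corollary, stating only that it follows from Theorem~\ref{thm:unicity_hol} by an argument ``similar to the one of the corollary \ref{cor:unicity_wavelets}'', and your construction of $F,G$ with time and frequency exchanged is precisely that argument. Your treatment of the one delicate point --- identifying the a.e.\ pointwise boundary limit of $F$ with $\hat f(-\cdot)$ via the $H^2(\H)$ theory, so that the second hypothesis of Theorem~\ref{thm:unicity_hol} is literally satisfied --- is a detail the paper glosses over, and you handle it correctly.
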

This says that there is unicity in the phase retrieval problem associated to the masked Fourier transform, in the case where there are two masks, $t\to 1$ and $t\to e^{-\alpha t}$.

\subsection{Discrete case}

Naturally, the functions we have to deal with in practice are generally not in $L^2(\R)$. They are instead discrete finite signals. In this section, we explain how to switch from the continuous to the discrete finite setting. As we will see, all results derived in the continuous case have a discrete equivalent but proofs become simpler because they use polynomials instead of holomorphic functions.

Let $f\in\C^n$ be a discrete function. We assume $n$ is even. The discrete Fourier transform of $f$ is:
\begin{equation*}
\hat f[k]=\underset{s=0}{\overset{n-1}{\sum}}f[s]e^{-\frac{2\pi isk}{n}}
\quad\quad
\mbox{for }k=-\frac{n}{2}+1,...,\frac{n}{2}
\end{equation*}
The analytic part of $f$ is $f_+\in\C^n$ such that:
\begin{align*}
\hat f_+[k]&=0 \mbox{ if }-\frac{n}{2}+1\leq k<0\\
\hat f_+[k]&=\hat f[k] \mbox{ if }k=0\mbox{ or }k=\frac{n}{2}\\
\hat f_+[k]&=2\hat f[k] \mbox{ if }0< k<\frac{n}{2}
\end{align*}
When $f$ is real, $f=\Re(f_+)$.

We consider wavelets of the following form, for $p>0$ and $a>1$:
\begin{equation}\label{eq:def_wavelets_discrete}
\hat\psi_j[k]=\rho(a^jk)(a^jk)^pe^{-a^jk}1_{k\geq 0}
\quad
\mbox{for all }j\in\Z,k=-\frac{n}{2}+1,...,\frac{n}{2}
\end{equation}
where $\rho:\R^+\to\C$ is such that $\rho(ax)=\rho(x)$ for every $x$ and $\rho$ does not vanish.

\nl
As in the continuous case, the set $\{|f\star\psi_j|\}_{j\in\Z}$ almost uniquely determines $f_+$. Naturally, the global phase still cannot be determined. The mean value of $f_+$ can also not be determined, because $\hat\psi_j[0]=0$ for all $j$. To determine the mean value and the global phase, we would need some additional information, for example the value of $f\star\phi$ for some low frequency signal $\phi$.

\begin{thm}[Discrete version of \ref{cor:unicity_wavelets}]
\label{thm:unicity_wavelets_discrete}
Let $f,g\in\C^n$ be discrete signals and $(\psi_j)_{j\in\Z}$ a family of wavelets of the form \eqref{eq:def_wavelets_discrete}. Let $j,l\in\Z$ be two distinct integers. Then:
\begin{equation}\label{eq:equality_moduli_discrete}
|f\star\psi_j|=|g\star\psi_j|
\quad\mbox{and}\quad
|f\star\psi_l|=|g\star\psi_l|
\end{equation}
if and only if, for some $\phi\in\R,c\in\C$:
\begin{equation*}
f_+=e^{i\phi}g_++c
\end{equation*}
\end{thm}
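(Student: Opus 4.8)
The plan is to reduce the discrete statement to its continuous analogue, Theorem~\ref{thm:unicity_hol}, by replacing holomorphic functions on $\H$ with polynomials. The key observation is that for a discrete signal the analytic part $f_+$ has Fourier support in $\{0,1,\dots,n/2\}$, so the quantity analogous to \eqref{eq:def_holomorphic_extension}, namely
\begin{equation*}
P(z)=\sum_{k=0}^{n/2}(a^jk)^p\rho(a^jk)\hat f_+[k]\,z^{k}
\end{equation*}
is a genuine polynomial of degree at most $n/2$ in the variable $z$, rather than a transcendental function. Here I would substitute $z=e^{i(x+iy)\cdot\frac{2\pi}{n}}$, i.e.\ $z=e^{-2\pi y/n}e^{2\pi i x/n}$, so that evaluating $P$ on the circle of radius $r=e^{-2\pi y/n}$ reproduces, up to the normalizing factor $a^{pj}$, the wavelet coefficients $f\star\psi_j$. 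First I would make this dictionary precise: exactly as in the proof of Corollary~\ref{cor:unicity_wavelets}, the hypotheses \eqref{eq:equality_moduli_discrete} translate into the statement that the two polynomials $P$ and $Q$ (built from $f_+$ and $g_+$) have equal modulus on two distinct concentric circles $|z|=r_j$ and $|z|=r_l$, with $r_j=e^{-2\pi a^j/n}$ and $r_l=e^{-2\pi a^l/n}$.

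Once we are reduced to polynomials, the core argument is the classical factorization of $|P|=|Q|$ on circles in terms of zeros. I would write $P(z)=c_P\prod_m(z-\zeta_m)$ and invoke the elementary identity that $|P(z)|^2=P(z)\overline{P(\bar z^{*})}$ on a circle, which leads to the condition that the zeros of $P$ and the zeros of $Q$ must coincide up to inversion $\zeta\mapsto r^2/\bar\zeta$ with respect to each of the two circles. Having equality of modulus on \emph{two} different circles forces, for each zero, that the only reflection compatible with both radii is the trivial one, so $P$ and $Q$ share the same zeros (possibly after removing the roots at $z=0$, which correspond to the undetermined mean value and account for the additive constant $c$). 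This is the exact discrete shadow of the uniqueness of zero sets in \eqref{eq:equality_zeros} and the reason the discrete proof is ``easier'': inversion in a circle is an explicit Möbius map, and two circles have at most one common fixed configuration, so no subtle analytic continuation is needed. Equal zeros and equal leading behavior then give $P=e^{i\phi}Q$, which unwinds to $f_+=e^{i\phi}g_++c$; conversely, the additive constant $c$ only shifts $\hat f_+[0]$, which all wavelets annihilate, so the reverse implication is immediate from $\hat\psi_j[0]=0$.

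The main obstacle, and the step I would spend the most care on, is handling the boundary/degenerate cases so that the ``two circles pin down the reflection'' argument is airtight. Specifically: (i) the root at $z=0$, i.e.\ the coefficient $\hat f_+[0]$, must be quarantined since it is genuinely undetermined and produces the constant $c$; (ii) one must verify that no zero of $P$ can lie \emph{on} either circle in a way that creates spurious ambiguity, and that the reflections $\zeta\mapsto r_j^2/\bar\zeta$ and $\zeta\mapsto r_l^2/\bar\zeta$ agree only when $\zeta$ is fixed by both, which fails unless $r_j=r_l$ (excluded since $j\ne l$). I would phrase this as: a zero $\zeta\neq0$ of $P$ is either already a zero of $Q$, or its reflection in the $r_j$-circle is a zero of $Q$; the same dichotomy holds for the $r_l$-circle; combining the two and using $r_j\neq r_l$ eliminates the nontrivial reflections, forcing the zero sets to be identical. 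The rest is bookkeeping: matching multiplicities, matching the nonvanishing factor $(a^jk)^p\rho(a^jk)$ which is common to both sides and hence divides out, and translating the polynomial identity back through the inverse discrete Fourier transform to obtain the claimed relation between $f_+$ and $g_+$.
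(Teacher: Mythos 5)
Your overall route is the same as the paper's: form polynomials from the positive-frequency Fourier coefficients, identify $f\star\psi_j$ with the restriction of such a polynomial to a circle of radius depending on $j$, convert \eqref{eq:equality_moduli_discrete} into the identities $F(e^{-a^j}z)\overline{F}(e^{-a^j}/z)=G(e^{-a^j}z)\overline{G}(e^{-a^j}/z)$ (and the same with $l$), and conclude via zero sets. The genuine gap is in your key elimination step. From the first identity, if $\zeta\neq 0$ is a zero of $P$ with $\mu_P(\zeta)>\mu_Q(\zeta)$, you can only conclude that the reflected point $\sigma_{r_j}(\zeta)=r_j^2/\bar\zeta$ satisfies $\mu_Q>\mu_P$ there; from the second identity, the same holds at $\sigma_{r_l}(\zeta)$. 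This is not a contradiction: nothing prevents $Q$ from vanishing at \emph{both} reflected points, so ``combining the two dichotomies and using $r_j\neq r_l$'' eliminates nothing by itself. The observation that the two reflections are different maps is irrelevant at the level of a single zero, because the multiset identities do not set up a pointwise correspondence in which one image point must be compatible with both circles simultaneously.

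The missing ingredient — and the heart of the paper's proof — is an iteration. Writing $\delta=\mu_F-\mu_G$, the two multiplicity relations give $\delta(z)=-\delta(\sigma_{r_j}z)$ and $\delta(z)=-\delta(\sigma_{r_l}z)$; composing them, $\delta$ is invariant under the homothety $\sigma_{r_l}\circ\sigma_{r_j}:z\mapsto e^{2(a^j-a^l)}z$, which has infinite order since $j\neq l$ and $a>1$. Hence if $\delta(\zeta)\neq 0$ for some $\zeta\neq 0$, then $\delta\neq 0$ at the infinitely many points $e^{2n(a^j-a^l)}\zeta$, $n\in\N$, contradicting the finiteness of the root sets of $F$ and $G$. (Equivalently: the multiset where $\delta>0$ would be invariant under a nontrivial scaling, which is impossible for a finite nonempty set of nonzero points — consider its element of largest modulus.) Your write-up contains no trace of this orbit/finiteness argument, and without it the conclusion that the zero sets coincide does not follow. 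Two smaller inaccuracies in your dictionary: building $P$ from $\hat f_+$ rather than $\hat f$ garbles the Nyquist coefficient $k=n/2$ (where $\hat f_+[k]=\hat f[k]$, not $2\hat f[k]$), so $|P|$ on the circle is not exactly the given modulus data; and the additive constant $c$ arises because $\hat f_+[0]$ is absent from the polynomial altogether (its coefficient is killed by the factor $k^p$), not from a root of $P$ at $z=0$ — the multiplicities at the origin are forced to agree automatically by a degree count.
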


\begin{proof}
We first assume $f_+=e^{i\phi}g_++c$. Taking the Fourier transform of this equality yields:
\begin{equation*}
\hat f[k]=e^{i\phi}\hat g[k]\quad\quad
\mbox{for all }k=1,...,\frac{n}{2}
\end{equation*}
As $\hat\psi_j[k]=0$ for $k=-\frac{n}{2}+1,...,0$:
\begin{gather*}
\hat f[k]\hat\psi_j[k]=e^{i\phi}\hat g[k]\hat\psi_j[k]\quad\quad
\mbox{for all }k=-\frac{n}{2}+1,...,\frac{n}{2}\\
\Rightarrow\quad (f\star\psi_j=e^{i\phi}(g\star\psi_j))
\end{gather*}
So $|f\star\psi_j|=|g\star\psi_j|$ and, similarly, $|f\star\psi_l|=|g\star\psi_l|$.

\nl
We now suppose conversely that $|f\star\psi_j|=|g\star\psi_j|$ and $|f\star\psi_l|=|g\star\psi_l|$. We define:
\begin{equation*}
F(z)=\frac{1}{n}\,\underset{k=1}{\overset{n/2}{\sum}}\hat f[k]\rho(k)k^pz^k\quad\quad
G(z)=\frac{1}{n}\,\underset{k=1}{\overset{n/2}{\sum}}\hat g[k]\rho(k)k^pz^k\quad\quad
\forall z\in\C
\end{equation*}
These polynomials are the discrete equivalents of functions $F$ and $G$ used in the proof of \ref{cor:unicity_wavelets}. For all $s=-\frac{n}{2}+1,...,\frac{n}{2}$:
\begin{align*}
F(e^{-a^j}e^{\frac{2\pi is}{n}})&=\frac{1}{n}\,\underset{k=1}{\overset{n/2}{\sum}}\hat f[k]\rho(k)k^pe^{-a^jk}e^{\frac{2\pi iks}{n}}\\
&=a^{-jp}\frac{1}{n}\,\underset{k=-n/2+1}{\overset{n/2}{\sum}}\hat f[k]\hat\psi_j[k]e^{\frac{2\pi iks}{n}}\\
&=a^{-jp}\left(f\star\psi_j[s]\right)
\end{align*}
Similarly, $G(e^{-a^j}e^{\frac{2\pi is}{n}})=a^{-jp}(g\star\psi_j[s])$ for all $s=-\frac{n}{2}+1,...,\frac{n}{2}$.

Thus, $f\star\psi_j$ and $g\star\psi_j$ can be seen as the restrictions of $F$ and $G$ to the circle of radius $e^{-a^j}$. This is similar to the continuous case, where $f\star\psi_j$ and $g\star\psi_j$ were the restrictions of functions $F,G$ to horizontal lines.

The equality \eqref{eq:equality_moduli_discrete} implies:
\begin{gather*}
\left|F(e^{-a^j}e^{\frac{2\pi is}{n}})\right|^2=\left|G(e^{-a^j}e^{\frac{2\pi is}{n}})\right|^2
\quad\quad
\mbox{for all }s=-\frac{n}{2}+1,...,\frac{n}{2}\\
\Leftrightarrow
F(e^{-a^j}e^{\frac{2\pi is}{n}})\overline{F}(e^{-a^j}e^{-\frac{2\pi is}{n}})
=G(e^{-a^j}e^{\frac{2\pi is}{n}})\overline{G}(e^{-a^j}e^{-\frac{2\pi is}{n}})
\quad\quad
\mbox{for all }s=-\frac{n}{2}+1,...,\frac{n}{2}\\
\end{gather*}
The functions $z\to F(e^{-a^j}z)\overline{F}(e^{-a^j}\frac{1}{z})$ and $z\to G(e^{-a^j}z)\overline{G}(e^{-a^j}\frac{1}{z})$ are polynomials of degree $n-2$ (up to multiplication by $z^{n/2-1}$). They share $n$ common values so they are equal. The same is true for $l$ instead of $j$ so:
\begin{gather}
F(e^{-a^j}z)\overline{F}\left(e^{-a^j}\frac{1}{z}\right)= G(e^{-a^j}z)\overline{G}\left(e^{-a^j}\frac{1}{z}\right)\quad\quad\forall z\in\C\label{eq:eq_poly_1}\\
F(e^{-a^l}z)\overline{F}\left(e^{-a^l}\frac{1}{z}\right)= G(e^{-a^l}z)\overline{G}\left(e^{-a^l}\frac{1}{z}\right)\quad\quad\forall z\in\C\label{eq:eq_poly_2}
\end{gather}
If we show that these equalities imply $F=e^{i\phi}G$ for some $\phi\in\R$, the proof will be finished. Indeed, from the definition of $F$ and $G$, we will then have $\hat f[k]=e^{i\phi}\hat g[k]$ for all $k=1,...,\frac{n}{2}$ so $\hat f_+[k]=e^{i\phi}\hat g_+[k]$ for all $k\ne 0$. It implies $f_+=e^{i\phi}g_++c$ for $c=\frac{1}{n}\left(\hat f_+[0]-e^{i\phi}\hat g_+[0]\right)$.

It suffices to show that $F$ and $G$ have the same roots (with multiplicity) because then, they will be proportional and, from \eqref{eq:eq_poly_1}, \eqref{eq:eq_poly_2}, the proportionality constant must be of modulus $1$.

For each $z\in\C$, let $\mu_F(z)$ (resp. $\mu_G(z)$) be the multiplicity of $z$ as a root of $F$ (resp. $G$). The polynomials of \eqref{eq:eq_poly_1} are of respective degree $n-2\mu_F(0)$ and $n-2\mu_G(0)$ so $\mu_F(0)=\mu_G(0)$.

For all $z\ne 0$, the multiplicity of $e^{a^j}z$ as a zero of \eqref{eq:eq_poly_1} is:
\begin{equation*}
\mu_F(z)+\mu_F\left(\frac{e^{-2a^j}}{\overline{z}}\right)
=\mu_G(z)+\mu_G\left(\frac{e^{-2a^j}}{\overline{z}}\right)
\end{equation*}
and the multiplicity of $e^{2a^j-a^l}z$ as a zero of \eqref{eq:eq_poly_2} is:
\begin{equation*}
\mu_F(e^{2(a^j-a^l)} z)+\mu_F\left(\frac{e^{-2a^j}}{\overline{z}}\right)
=\mu_G(e^{2(a^j-a^l)} z)+\mu_G\left(\frac{e^{-2a^j}}{\overline{z}}\right)
\end{equation*}
Substracting this last equality to the previous one implies that, for all $z$:
\begin{equation*}
\mu_F(z)-\mu_G(z)=\mu_F(e^{2(a^j-a^l)}z)-\mu_G(e^{2(a^j-a^l)}z)
\end{equation*}
By applying this equality several times, we get, for all $n\in\N$:
\begin{align*}
\mu_F(z)-\mu_G(z)&=\mu_F(e^{2(a^j-a^l)}z)-\mu_G(e^{2(a^j-a^l)}z)\\
&=\mu_F(e^{4(a^j-a^l)}z)-\mu_G(e^{4(a^j-a^l)}z)\\
&=...\\
&=\mu_F(e^{2n(a^j-a^l)}z)-\mu_G(e^{2n(a^j-a^l)}z)
\end{align*}
As $F$ and $G$ have a finite number of roots, $\mu_F(e^{2n(a^j-a^l)}z)-\mu_G(e^{2n(a^j-a^l)}z)=0$ if $n$ is large enough. So $\mu_F(z)=\mu_G(z)$ for all $z\in\C$.
\end{proof}

As in the section \ref{ss:unicity_theorem}, a very similar proof gives a unicity result for the case of the Fourier transform with masks, if the masks are well-chosen.
\begin{thm}[Discrete version of \ref{cor:unicity_alpha}]
Let $\alpha>0$ be fixed. Let $f,g\in\C^{2n-1}$ be two discrete signals with support in $\{0,...,n-1\}$:
\begin{equation*}
f[s]=g[s]=0\mbox{ for }s=n,...,2n-2
\end{equation*}
If $|\hat f|=|\hat g|$ and $|\widehat{f[s]e^{-s\alpha}}|=|\widehat{g[s]e^{-s\alpha}}|$, then, for some $\phi\in\R$:
\begin{equation*}
f=e^{i\phi}g
\end{equation*}
\end{thm}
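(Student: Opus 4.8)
The plan is to mirror the proof of Theorem \ref{thm:unicity_wavelets_discrete}, replacing the holomorphic functions by the polynomials attached to $f$ and $g$, and exploiting that the two masks $t\mapsto 1$ and $t\mapsto e^{-\alpha t}$ produce moduli on two concentric circles rather than on two horizontal lines. Concretely, since $f$ and $g$ are supported in $\{0,\dots,n-1\}$, I would attach to them the polynomials $P(z)=\sum_{s=0}^{n-1}f[s]z^s$ and $Q(z)=\sum_{s=0}^{n-1}g[s]z^s$, each of degree at most $n-1$. Writing $N=2n-1$ and $\zeta_k=e^{-2\pi ik/N}$, the definition of the discrete Fourier transform gives $\hat f[k]=P(\zeta_k)$, while the exponentially weighted transform satisfies $\sum_s f[s]e^{-s\alpha}\zeta_k^{\,s}=P(e^{-\alpha}\zeta_k)$, and likewise for $g$ and $Q$. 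Thus the two hypotheses say exactly that $|P|=|Q|$ on the unit circle and on the circle of radius $e^{-\alpha}$, sampled at the $N$ points $\zeta_k$.

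Next I would convert each modulus identity into a polynomial identity, exactly as before. On the unit circle $|P(\zeta_k)|^2=P(\zeta_k)\overline{P}(1/\zeta_k)$, so $z^{n-1}P(z)\overline{P}(1/z)$ and $z^{n-1}Q(z)\overline{Q}(1/z)$ are genuine polynomials of degree at most $2n-2$ agreeing at the $N=2n-1$ distinct points $\zeta_k$; hence they coincide, giving
\begin{equation*}
P(z)\overline{P}(1/z)=Q(z)\overline{Q}(1/z)\qquad\forall z\neq 0.
\end{equation*}
Sampling instead on the circle of radius $e^{-\alpha}$ and repeating the argument with $P(e^{-\alpha}z)$ in place of $P(z)$ yields
\begin{equation*}
P(e^{-\alpha}z)\overline{P}(e^{-\alpha}/z)=Q(e^{-\alpha}z)\overline{Q}(e^{-\alpha}/z)\qquad\forall z\neq 0.
\end{equation*}
These are the precise analogues of \eqref{eq:eq_poly_1} and \eqref{eq:eq_poly_2}, the two radii $e^{0}$ and $e^{-\alpha}$ playing the role of $e^{-a^j}$ and $e^{-a^l}$.

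From here I would run the same root-counting argument. Writing $\mu_P(z),\mu_Q(z)$ for the multiplicities of $z$ as a root of $P,Q$, I would evaluate the first identity at $z$ and the second at $e^{-\alpha}z$: both pick up a common summand $\mu_{\cdot}(1/\overline z)$, and subtracting cancels it and leaves $\mu_P(z)-\mu_Q(z)=\mu_P(e^{-2\alpha}z)-\mu_Q(e^{-2\alpha}z)$ for every $z\neq0$. Iterating and letting $e^{-2k\alpha}z\to0$ (which escapes the finite root set, using $\alpha>0$) forces $\mu_P(z)=\mu_Q(z)$ for all $z\neq0$. Thus $P$ and $Q$ share the same nonzero roots with multiplicity, and factor as $P=c_P\,z^{m_P}\tilde R$ and $Q=c_Q\,z^{m_Q}\tilde R$ with a common monic $\tilde R$ collecting those roots.

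The delicate point, and the place where the second mask is indispensable, is to also match the multiplicity at $0$. Substituting the factorizations into the first identity, the factors $z^{m_P}$ and $z^{-m_P}$ cancel, so it only yields $|c_P|=|c_Q|$ and is completely blind to $m_P$. Substituting them into the second identity, however, produces a surviving factor $(e^{-\alpha})^{2m_P}$, giving $|c_P|^2e^{-2\alpha m_P}=|c_Q|^2e^{-2\alpha m_Q}$; combined with $|c_P|=|c_Q|$ and $\alpha>0$ this forces $m_P=m_Q$. Hence $P=(c_P/c_Q)\,Q$ with $|c_P/c_Q|=1$, i.e.\ $P=e^{i\phi}Q$, which read coefficient by coefficient is exactly $f=e^{i\phi}g$. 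I expect this multiplicity-at-the-origin step to be the main subtlety: it is the discrete counterpart of the fact that, in the continuous setting, a single mask does not suffice and one genuinely needs the second, exponentially weighted measurement.
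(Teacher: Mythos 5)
Your proof is correct and follows essentially the same route the paper intends: the paper justifies this theorem only by remarking that it admits a proof ``very similar'' to that of Theorem \ref{thm:unicity_wavelets_discrete}, and your argument is exactly that adaptation --- polynomials $P,Q$ attached to the supported signals, the two modulus hypotheses upgraded to the polynomial identities on the circles of radius $1$ and $e^{-\alpha}$ by matching $2n-1$ sample values against degree $2n-2$, and root-multiplicity cancellation along the orbit $e^{-2k\alpha}z$. Your handling of the multiplicity at the origin (the radius-$1$ identity is blind to it, while the radius-$e^{-\alpha}$ identity contributes the factor $e^{-2\alpha m_P}$ that pins it down) is a correct and clean resolution of the one step where the two masks play asymmetric roles; the paper's template instead invokes a degree comparison for this, so this is a minor variation within the same approach rather than a different proof.
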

Remark that this theorem describes systems of $4n-2$ linear measurements whose moduli are enough to recover each complex signal of dimension $n$. As discussed in the introduction, it is known that $4n-4$ generic measurements always achieve this property (\citep{conca}). However, it is in general difficult to find deterministic systems for which it can be proven.

\subsection{Proof of theorem \ref{thm:unicity_hol}\label{ss:dem}}

\begin{thm*}[\ref{thm:unicity_hol}]
Let $\alpha>0$ be fixed. Let $F,G:\H\to\C$ be holomorphic functions such that, for some $M>0$:
\begin{equation*}
\tag{\ref{eq:L2_uniform}}
\int_\R|F(x+iy)|^2dx<M
\quad\mbox{and}\quad
\int_\R|G(x+iy)|^2dx<M
\quad\quad
\forall y>0
\end{equation*}
We suppose that:
\begin{gather*}
|F(x+i\alpha)|=|G(x+i\alpha)|\mbox{ for a.e. }x\in\R\\
\underset{y\to 0^+}{\lim}|F(x+iy)|=\underset{y\to 0^+}{\lim}|G(x+iy)|
\mbox{ for a.e. }x\in\R
\end{gather*}

Then, for some $\phi\in\R$:
\begin{equation}\label{eq:unicity_hol}
F=e^{i\phi} G
\end{equation}
\end{thm*}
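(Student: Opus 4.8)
The plan is to transport the problem into the Hardy space of the upper half-plane and then mimic, in the continuous setting, the divisor-matching argument used in the discrete Theorem \ref{thm:unicity_wavelets_discrete}. First I would reinterpret hypothesis \eqref{eq:L2_uniform}: a holomorphic function on $\H$ with $\sup_{y>0}\int_\R|F(x+iy)|^2\,dx<\infty$ is exactly an element of the Hardy space $H^2(\H)$, i.e.\ (Paley--Wiener) the Fourier--Laplace transform of an $L^2(0,+\infty)$ function. Such an $F$ admits non-tangential boundary values a.e.\ on $\R$, so the limit hypothesis $\lim_{y\to0^+}|F(x+iy)|=\lim_{y\to0^+}|G(x+iy)|$ becomes an equality $|F|=|G|$ a.e.\ on the boundary line $\R$. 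Using the canonical inner--outer factorization $F=\Theta_F\,\mathcal O_F$ and $G=\Theta_G\,\mathcal O_G$, the outer factor is determined by the boundary modulus; since $|F|=|G|$ a.e.\ on $\R$, I obtain $\mathcal O_F=\mathcal O_G$ up to a unimodular constant. As outer functions do not vanish on $\H$, dividing the second hypothesis $|F(\cdot+i\alpha)|=|G(\cdot+i\alpha)|$ by the common $|\mathcal O|$ reduces the whole statement to the single claim that the two inner functions agree in modulus on the one interior line $\R+i\alpha$.

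Next I would study $u=F/G$, a function of bounded type on $\H$ whose zeros are the zeros of $F$, whose poles are the zeros of $G$, and which satisfies $|u|=1$ a.e.\ on both $\R$ and $\R+i\alpha$. The geometric heart of the argument is reflection: being unimodular on $\R$, $u$ continues across $\R$ by $u(z)=1/\overline{u(\bar z)}$, and being unimodular on $\R+i\alpha$ it continues across that line by the reflection $z\mapsto \bar z+2i\alpha$. The composition of these two reflections is the translation $z\mapsto z+2i\alpha$, so $u$ extends to a meromorphic function on all of $\C$ that is periodic with period $2i\alpha$. In particular its divisor (zeros minus poles) is invariant under $z\mapsto z+2i\alpha$.

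This invariance is exactly what removes the non-uniqueness that is present when only one line is known. If $G$ had a zero $w_0\in\H$ not cancelled by a zero of $F$, then $u$ would have a pole at $w_0$, hence poles at every $w_0+2i\alpha n$ with $n\ge 0$; these would be infinitely many zeros of $G$ marching to $i\infty$, and since $\Im(w_0+2i\alpha n)/(1+|w_0+2i\alpha n|^2)\sim 1/(2\alpha n)$ their contributions sum to $+\infty$, contradicting the Blaschke condition satisfied by the zeros of $G\in H^2(\H)$. The same reasoning applied to $1/u=G/F$ shows that no zero of $F$ survives either, so $F$ and $G$ have exactly the same zeros with multiplicity and $u$ is holomorphic and nowhere vanishing on $\H$. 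This is the precise analogue of the discrete step in which the identity $\mu_F(z)-\mu_G(z)=\mu_F(e^{2(a^j-a^l)}z)-\mu_G(e^{2(a^j-a^l)}z)$ was iterated and finiteness of the zero set forced $\mu_F=\mu_G$: here a translation orbit plays the role of the dilation orbit and the Blaschke condition plays the role of finiteness. Once $u$ is holomorphic, nowhere zero, and unimodular on the two edges of the strip $0<\Im z<\alpha$, applying the maximum principle to $u$ and to $1/u$, in Phragm\'en--Lindel\"of form to control the unbounded direction (the required growth bound being inherited from membership in $H^2(\H)$), forces $|u|\equiv 1$, hence $u\equiv e^{i\phi}$ and $F=e^{i\phi}G$.

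The step I expect to be the main obstacle is making the reflection and meromorphic continuation of $u$ rigorous from only almost-everywhere modulus data: one must replace naive Schwarz reflection, which wants genuine boundary values, by the bounded-type / Nevanlinna pseudocontinuation machinery, or else carry out the whole argument at the level of the harmonic function $\log|u|$, odd-reflected across the two lines, and show directly that its resulting $2i\alpha$-periodic system of logarithmic singularities is incompatible with the Blaschke condition. Everything else---the $H^2$ reformulation, the cancellation of the outer factors, and the concluding maximum-principle argument---is routine once this divisor-matching along the translation orbit has been established.
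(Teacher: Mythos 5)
Your proposal is correct in substance, and at its heart it uses exactly the same mechanism as the paper: a discrepancy between the zero divisors of $F$ and $G$ must be invariant under the translation $z\mapsto z+2i\alpha$, hence produces zeros with $\Im z_n\sim 2\alpha n$, whose Blaschke sum $\sum_k \Im z_k/(1+|z_k|^2)$ diverges and contradicts the convergence condition \eqref{eq:condition_convergence} satisfied by zeros of functions obeying \eqref{eq:L2_uniform}. That is literally the paper's concluding computation. Where you genuinely differ is the route to this translation invariance. The paper never forms $u=F/G$: it factorizes $F$, $G$, $F(\cdot+i\alpha)$, $G(\cdot+i\alpha)$ via Lemma \ref{lem:factorization}, derives an identity between Blaschke quotients, and needs two dedicated lemmas (Lemma \ref{lem:prolongation} for the meromorphic extension of that identity and Lemma \ref{lem:S_1} for the singular factors). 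Your double-reflection argument yields the stronger and cleaner statement that $u$ itself is $2i\alpha$-periodic. Moreover, the obstacle you flag at the end is less serious than you fear, provided you reflect in the right order: reflect first across the \emph{interior} line $\R+i\alpha$, where $u$ is genuinely meromorphic in a neighbourhood and $|u|=1$ holds everywhere on the line by continuity (zeros of $G$ there are matched by zeros of $F$ of equal order, since $|F|=|G|$ on the line); the formula $1/\overline{u(\bar z+2i\alpha)}$ is meromorphic on $\{\Im z<2\alpha\}$ and already extends $u$ to all of $\C$. Only then invoke the reflection across $\R$, which is now an identity between two globally meromorphic functions agreeing a.e.\ on $\R$; no pseudocontinuation machinery is needed.

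The real caveat is your last sentence: the concluding step is \emph{not} routine, and it is precisely where the paper spends Lemma \ref{lem:S_1} and its appendix (Lemma \ref{lem_app:dE_zero}). After the divisor matching, what you know is that $u$ is entire, non-vanishing, $2i\alpha$-periodic, and unimodular on $\R$ and $\R+i\alpha$; this does \emph{not} force $u$ to be constant. For instance $u(z)=\exp\bigl(2i\cosh(\pi z/\alpha)\bigr)$ satisfies all four properties, so a bare maximum principle or Phragm\'en--Lindel\"of argument fails. What must still be excluded is a mismatch of singular inner factors: on $\H$ one has $u=e^{ic+i\delta z}S_F/S_G$, and one must show $S_F=S_G$ and $\delta=0$. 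This can be done within your framework, but only by actually using the canonical factorization's growth control: $1/|S_G(x+iy)|\le\exp\bigl(C(1+x^2)/y\bigr)$, so on the strip $\{\alpha\le\Im z\le 3\alpha\}$ one gets $|u|\le e^{O(x^2)}$, and by periodicity the same bound holds on $\{0\le\Im z\le\alpha\}$; this sub-critical growth makes Phragm\'en--Lindel\"of applicable to $u$ and $1/u$ and forces $|u|\equiv 1$. (The paper instead shows directly, via Poisson-kernel limits of $\log|S_F/S_G|$ and the singularity of the measures, that the singular measures coincide.) So your plan is completable, but the step you label routine is exactly the one that consumes the remaining nontrivial analysis, and it rests on the same factorization foundations as the paper's proof.
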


\begin{proof}[Proof of theorem \ref{thm:unicity_hol}]
This demonstration relies on the ideas used by \citet{akutowicz}.

If $F= 0$, the theorem is true: $G$ is null over a whole line and, as $G$ is holomorphic, $G=0$. The same reasoning holds if $G=0$. We now assume $F\ne 0,G\ne 0$.

The central point of the proof is to factorize the functions $F,F(.+i\alpha),G,G(.+i\alpha)$ as in the following lemma.
\begin{lem}{\rm\citep{kryloff}\footnote{Non russian speaking readers may also deduce this theorem from \citet[Thm 17.17]{rudin}: functions over $\H$ may be turned into functions over $D(0,1)$ by composing them with the conformal application $z\in D(0,1)\to\frac{1-z}{1+z}i\in\H$. The main difficulty is to show that if $H:\H\to\C$ satisfies \eqref{eq:L2_uniform}, then $\tilde H:z\in D(0,1)\to H\left(\frac{1-z}{1+z}i\right)\in\C$ is of class $H^2$ and Rudin's theorem can be applied.}}\label{lem:factorization}
The function $F$ admits the following factorization:
\begin{equation*}
F(z)=e^{ic+i\beta z}B(z)D(z)S(z)
\end{equation*}
Here, $c$ and $\beta$ are real numbers. The function $B$ is a Blaschke product. It is formed with the zeros of $F$ in the upper half-plane $\H$. We call $(z_k)$ these zeros, counted with multiplicity, with the exception of $i$. We call $m$ the multiplicity of $i$ as zero.
\begin{equation}\label{eq:def_blaschke}
  B(z)=\left(\frac{z-i}{z+i}\right)^{m}\underset{k}{\prod}\frac{|z_k-i|}{z_k-i}\frac{|z_k+i|}{z_k+i}\frac{z-z_k}{z-\overline{z}_k}
\end{equation}
This product converges over $\H$, which is equivalent to:
\begin{equation}\label{eq:condition_convergence}
\underset{k}{\sum}\frac{\Im z_k}{1+|z_k|^2}<+\infty
\end{equation}

The functions $D$ and $S$ are defined by:
\begin{gather}
D(z)=\exp\left(\frac{1}{\pi i}\int_{\R}\frac{1+tz}{t-z}\frac{\log|F(t)|}{1+t^2}dt\right)\label{eq:def_D}\\
S(z)=\exp\left(\frac{i}{\pi}\int_{\R}\frac{1+tz}{t-z}dE(t)\right)\label{eq:def_S}
\end{gather}
In the first equation, $|F(t)|$ is the limit of $|F|$ on $\R$. In the second one, $dE$ is a positive bounded measure, singular with respect to Lebesgue measure.

Both integrals converge absolutely for any $z\in\H$.
\end{lem}

The same factorization can be applied to $F(.+i\alpha),G$ and $G(.+i\alpha)$:
\begin{equation*}
\begin{array}{cc}
F(z)=e^{ic_F+i\beta_F z}B_F(z)D_F(z)S_F(z)&
\quad
G(z)=e^{ic_G+i\beta_G z}B_G(z)D_G(z)S_G(z)\\
F(z+i\alpha)=e^{i\tilde c_F+i\tilde\beta_F z}\tilde B_F(z)\tilde D_F(z)\tilde S_F(z)&
\quad
G(z+i\alpha)=e^{i\tilde c_G+i\tilde\beta_G z}\tilde B_G(z)\tilde D_G(z)\tilde S_G(z)
\end{array}
\end{equation*}

As $F(.+i\alpha)$ and $G(.+i\alpha)$ are analytic on the real line, they actually have no singular part $S$. The proof may be found in \citep[Thm 6.3]{garnett}; it is done for functions on the unit disk but also holds for functions on $\H$.
\begin{equation}\label{eq:tilde_S_1}
\tilde S_F=\tilde S_G=1
\end{equation}

Because $\underset{y\to 0^+}{\lim}|F(.+iy)|=\underset{y\to 0^+}{\lim}|G(.+iy)|$ and $|F(.+i\alpha)|=|G(.+i\alpha)|$, we have $D_F=D_G$ and $\tilde D_F=\tilde D_G$. We show that it implies a relation between the $B$'s, that is, a relation between the zeros of $F$ and $G$. From this relation, we will be able to prove that $F$ and $G$ have the same zeros and that, up to a global phase, they are equal.

For all $z\in\H$:
\begin{align*}
\frac{e^{ic_F+i\beta_F(z+i\alpha)}B_F(z+i\alpha)D_F(z+i\alpha)S_F(z+i\alpha)}{e^{i\tilde c_F+i\tilde\beta_F z}\tilde B_F(z)\tilde D_F(z)}
&=\frac{F(z+i\alpha)}{F(z+i\alpha)}=1\\
&=\frac{G(z+i\alpha)}{G(z+i\alpha)}\\
&=\frac{e^{ic_G+i\beta_G(z+i\alpha)}B_G(z+i\alpha)D_G(z+i\alpha)S_G(z+i\alpha)}{e^{i\tilde c_G+i\tilde\beta_G z}\tilde B_G(z)\tilde D_G(z)}
\end{align*}
\begin{gather}
\Rightarrow \quad
\frac{B_F(z+i\alpha)\tilde B_G(z)}{B_G(z+i\alpha)\tilde B_F(z)}
=e^{iC+iBz}\frac{S_G(z+i\alpha)}{S_F(z+i\alpha)}\label{eq:relation_zeros}
\\
\mbox{for some }C,B\in\R\nonumber
\end{gather}
Equality \eqref{eq:relation_zeros} holds only for $z\in\H$. It is a priori not even defined for $z\in\C-\H$. Before going on, we must show that \eqref{eq:relation_zeros} is meaningful and still valid over all $\C$. This is the purpose of the two following lemmas, whose proofs may be found in appendix \ref{app:lemmas_unicity_hol}.

For $z\in\H$, we denote by $\mu_F(z)$ (resp. $\mu_G(z)$) the multiplicity of $z$ as a zero of $F$ (resp. $G$).

\begin{lem}\label{lem:prolongation}
There exists a meromorphic function $B_w:\C\to\C$ such that:
\begin{equation*}
B_w(z)=\frac{B_F(z+i\alpha)\tilde B_G(z)}{B_G(z+i\alpha)\tilde B_F(z)}\quad\quad\forall z\in\H
\end{equation*}

Moreover, for all $z\in\H$, the multiplicity of $\overline{z}-i\alpha$ as a pole of $B_w$ is:
\begin{equation}\label{eq:formule_mult}
(\mu_F(z)-\mu_G(z))-(\mu_F(z+2i\alpha)-\mu_G(z+2i\alpha))
\end{equation}
\end{lem}
\begin{lem}\label{lem:S_1}
For all $z\in\H$, $\frac{S_G(z+i\alpha)}{S_F(z+i\alpha)}=1$.
\end{lem}

The equation \eqref{eq:relation_zeros} and the lemmas \ref{lem:prolongation} and \ref{lem:S_1} give, for all $z\in\H$ and thus all $z\in\C$ (because functions are meromorphic):
\begin{equation*}
B_w(z)=e^{iC+iBz}\quad\quad\forall z\in\C
\end{equation*}
The function $e^{iC+iB z}$ has no zero nor pole so, from \eqref{eq:formule_mult}, for all $z\in\H$:
\begin{equation*}
(\mu_F(z)-\mu_G(z))-(\mu_F(z+2i\alpha)-\mu_G(z+2i\alpha))
=0
\end{equation*}

So if $\mu_F(z)\ne\mu_G(z)$ for some $z$, we may by symmetry assume that $\mu_F(z)>\mu_G(z)$ and, in this case, for all $n\in\N^*$:
\begin{align*}
\mu_F(z+2ni\alpha)-\mu_G(z+2ni\alpha)
&=...\\
&=\mu_F(z+2i\alpha)-\mu_G(z+2i\alpha)\\
&=\mu_F(z)-\mu_G(z)>0
\end{align*}
In particular, $z+2ni\alpha$ is a zero of $F$ for all $n\in\N^*$. But this is impossible because, if it is the case, $\frac{\mbox{\footnotesize Im} (z+2ni\alpha)}{1+|z+2ni\alpha|^2}\sim\frac{1}{2n\alpha}$ and:
\begin{equation*}
\underset{k}{\sum}\frac{\Im z_k}{1+|z_k|^2}=+\infty
\end{equation*}
where the $(z_k)$ are the zeros of $F$ over $\H$. It is in contradiction with \eqref{eq:condition_convergence}.

So for all $z\in\H$, $\mu_F(z)=\mu_G(z)$. This implies that $B_F=B_G$ and $\tilde B_F=\tilde B_G$. So, for all $z\in\H$:
\begin{gather*}
F(z+i\alpha)=e^{i\tilde c_F+i\tilde\beta_Fz}\tilde B_F(z)\tilde D_F(z)
=e^{i\tilde c_F+i\tilde\beta_Fz}\tilde B_G(z)\tilde D_G(z)
=e^{i\gamma+i\delta z} G(z+i\alpha)\\
\mbox{with }\gamma=\tilde c_F-\tilde c_G\mbox{ and }
\delta=\tilde\beta_F-\tilde\beta_G
\end{gather*}
The functions $F$ and $G$ are meromorphic over $\H$ so the last equality actually holds over all $\{z\in\C\mbox{ s.t. }\Im z>-\alpha\}$.
\begin{align*}
|\underset{y\to 0^+}{\lim}F(x+iy)|&=|\underset{y\to 0^+}{\lim}e^{i\gamma+i\delta(x+iy-i\alpha)} G(x+iy)|\\
&=e^{\delta\alpha}|\underset{y\to 0^+}{\lim} G(x+iy)|
\end{align*}
Consequently, because $\delta$ is real and $\alpha\ne 0$, $\delta=0$. So:
\begin{equation*}
F(z)=e^{i\gamma} G(z)\quad\quad
\forall z\in\H
\end{equation*}
\end{proof}

\section{Weak stability of the reconstruction\label{s:weak_stability}}

In the previous section, we proved that the operator $U:f\to\{|f\star\psi_j|\}$ was injective, up to a global phase, for Cauchy wavelets. So we can theoretically reconstruct any function $f$ from $U(f)$. However, if we want the reconstruction to be possible in practice, we also need it to be stable to a small amount of noise:
\begin{equation*}
\left(U(f_1)\approx U(f_2)\right)
\quad\Rightarrow\quad
\left(f_1\approx f_2\right)
\end{equation*}

In this section, we show that it is, in some sense, the case: $U^{-1}$ is continuous.

Contrarily to the ones of the previous section, this result is not specific to Cauchy wavelets: it holds for all reasonable wavelets, as soon as $U$ is injective.



\subsection{Definitions}

As in the previous section, we consider only functions without negative frequencies:
\begin{equation*}
L^2_+(\R)=\{f\in L^2(\R)\mbox{ s.t. }\hat f(\omega)=0\mbox{ for a.e. }\omega<0\}
\end{equation*}
As the reconstruction is always up to a global phase, we need to define the quotient $L^2_+(\R)/S^1$:
\begin{equation*}
f=g\mbox{ in }L^2_+(\R)/S^1
\quad\Leftrightarrow\quad
f=e^{i\phi}g\mbox{ for some }\phi\in\R
\end{equation*}
The set $L^2_+(\R)/S^1$ is equipped with a natural metric:
\begin{equation*}
||f-g||_{2,S^1}=\underset{\phi\in \R}{\inf}\,||f-e^{i\phi} g||_2
\end{equation*}

\noindent We also define:
\begin{gather*}
L^2_{\Z}(\R)=\left\{(h_j)_{j\in\Z}\in L^2(\R)^\Z\mbox{ s.t. }\underset{j}{\sum}||h_j||_2^2<+\infty\right\}\\
\left|\left|(h_j)-(h'_j) \right|\right|_2=\sqrt{\underset{j\in\Z}{\sum}||h_j-h'_j||_2^2}\quad
\mbox{for any }(h_j),(h'_j)\in L^2_\Z(\R)
\end{gather*}
We are interested in the operator $U$:
\begin{equation}\label{eq:def_U}
\begin{array}{cccc}
U:&L^2_+(\R)/S^1&\to& L^2_{\Z}(\R)\\
&f&\to&(|f\star\psi_j|)_{j\in\Z}
\end{array}
\end{equation}
We require two conditions over the wavelets. They must be analytic:
\begin{equation}\label{eq:analyticity}
\hat\psi_j(\omega)=0 \mbox{ for a.e. }\omega<0,j\in\Z
\end{equation}
and satisfy an approximate Littlewood-Paley inequality:
\begin{equation}\label{eq:approx_littlewood_paley}
A\leq\underset{j\in\Z}{\sum}|\hat\psi_j(\omega)|^2\leq B
\quad\quad
\mbox{for a.e. }\omega>0,\mbox{for some }A,B>0
\end{equation}
This last inequality implies:
\begin{equation}\label{eq:U_bounds}
\forall f\in L^2_+(\R)/S^1,\quad
\sqrt{A}||f||_{2,S^1}\leq||U(f)||_2\leq\sqrt{B}||f||_{2,S^1}
\end{equation}
In particular, it ensures the continuity of $U$.

\subsection{Weak stability theorem}

\begin{thm}\label{thm:weak_stability}
We suppose that, for all $j\in\Z$, $\psi_j\in L^1(\R)\cap L^2(\R)$ and that \eqref{eq:analyticity} and \eqref{eq:approx_littlewood_paley} hold. We also suppose that $U$ is injective. Then:

\hskip 0.5cm
(i) The image of $U$, $I_U=\{U(f)\mbox{ s.t. }f\in L^2_+(\R)/S^1\}$ is closed in $L^2_\Z(\R)$.

\hskip 0.5cm
(ii) The application $U^{-1}:I_U\to L^2_+(\R)/S^1$ is continuous.
\end{thm}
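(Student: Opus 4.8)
The plan is to derive both statements from a single weak-compactness argument, the key observation being that the approximate Littlewood--Paley bound \eqref{eq:approx_littlewood_paley} makes $\|U(\cdot)\|_2$ a norm equivalent to the ordinary $L^2$ norm. Writing $m(\omega)=\sum_{j}|\hat\psi_j(\omega)|^2$, Plancherel and Tonelli give, for every $f\in L^2_+(\R)$,
\begin{equation*}
\|U(f)\|_2^2=\sum_{j}\|f\star\psi_j\|_2^2=\frac{1}{2\pi}\int_{\R}|\hat f(\omega)|^2\,m(\omega)\,d\omega .
\end{equation*}
Since $A\le m\le B$ a.e.\ on $\omega>0$ and $\hat f$ is supported there, the right-hand side is a Hilbert norm $\|\cdot\|_m$, with inner product $\langle u,v\rangle_m=\frac{1}{2\pi}\int\hat u\,\overline{\hat v}\,m\,d\omega$, equivalent to $\|\cdot\|_2$; in particular $\|U(f)\|_2=\|f\|_m$. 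Because $m$ is bounded above and below, weak convergence in $(L^2_+,\|\cdot\|_2)$ and in $(L^2_+,\|\cdot\|_m)$ coincide.

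The core step common to (i) and (ii) is as follows. Suppose $U(f_n)$ converges in $L^2_\Z(\R)$; it is then bounded, so by the lower bound in \eqref{eq:U_bounds} the representatives $f_n\in L^2_+(\R)$ are bounded in $L^2$. By reflexivity I extract a subsequence $f_{n_k}\rightharpoonup h$ weakly, with $h\in L^2_+(\R)$ because $L^2_+$ is a closed, hence weakly closed, subspace. I claim $\lim_k|f_{n_k}\star\psi_j|=|h\star\psi_j|$ in $L^2$ for every $j$. Indeed, since $\psi_j\in L^2$, for each fixed $x$ one has $(f\star\psi_j)(x)=\langle f,\overline{\psi_j(x-\cdot)}\rangle$ with $\overline{\psi_j(x-\cdot)}\in L^2$, so weak convergence yields $f_{n_k}\star\psi_j(x)\to h\star\psi_j(x)$ for every $x$, whence $|f_{n_k}\star\psi_j|\to|h\star\psi_j|$ pointwise. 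On the other hand the assumed $L^2$-convergence of $|f_{n_k}\star\psi_j|$ forces a.e.\ convergence along a further subsequence; matching the two limits proves the claim.

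Statement (i) is now immediate: if $U(f_n)\to G$, the claim identifies the limit as $G=(|h\star\psi_j|)_j=U(h)\in I_U$, so $I_U$ is closed. For (ii), apply the core step to a sequence with $U(f_n)\to U(f)$; the claim gives $U(h)=U(f)$, and injectivity of $U$ yields $h=f$ in $L^2_+/S^1$, so after adjusting the global phase I may take $h$ to be a representative of $f$. It remains to upgrade weak to strong convergence: from $\|U(f_{n_k})\|_2\to\|U(f)\|_2$ I get $\|f_{n_k}\|_m\to\|h\|_m$, and with $\langle f_{n_k},h\rangle_m\to\|h\|_m^2$ this gives
\begin{equation*}
\|f_{n_k}-h\|_m^2=\|f_{n_k}\|_m^2-2\Re\langle f_{n_k},h\rangle_m+\|h\|_m^2\longrightarrow 0 .
\end{equation*}
Equivalence of $\|\cdot\|_m$ and $\|\cdot\|_2$ turns this into $f_{n_k}\to h$ in $L^2$, so $\|f_{n_k}-f\|_{2,S^1}\to 0$. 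Since every subsequence of $(f_n)$ admits a further subsequence converging to $f$, the whole sequence converges, which is the continuity of $U^{-1}$.

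The main obstacle is exactly this last upgrade. The modulus makes $U$ nonlinear, so the lower bound in \eqref{eq:U_bounds} alone does \emph{not} control $\|f_n-f\|_{2,S^1}$ by $\|U(f_n)-U(f)\|_2$ --- this is precisely the non-uniform continuity exploited in the later sections. What rescues continuity is that $\|U(f)\|_2$ is an exact equivalent norm of $f$, which lets me invoke the Hilbert-space principle ``weak convergence together with convergence of norms implies strong convergence'' once the weak limit has been pinned down through the pointwise-convolution argument.
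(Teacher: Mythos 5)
Your proof is correct, but it follows a genuinely different route from the paper's. The paper establishes \emph{strong} compactness of each sequence $\{f_n\star\psi_j\}_n$ via the Riesz-Fr\'echet-Kolmogorov theorem (lemma \ref{lem:tot_bound}), extracts a diagonal subsequence with $f_{\phi(n)}\star\psi_j\to l_j$ in $L^2$, and then constructs the limit function $g$ explicitly by gluing $\hat l_j/\hat\psi_j$ on the supports of the $\hat\psi_j$, checking consistency through the commutation relation $l_{j_1}\star\psi_{j_2}=l_{j_2}\star\psi_{j_1}$ and proving $f_{\phi(n)}\to g$ by tail estimates in $j$ combined with the lower Littlewood-Paley bound (lemma \ref{lem:existence_g}). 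You instead use only \emph{weak} compactness (reflexivity of $L^2$), identify the weak limit's image under $U$ by the elementary observation that $f\star\psi_j(x)=\scal{f}{\overline{\psi_j(x-\cdot)}}$ converges pointwise under weak convergence, and then upgrade weak to strong convergence by the Radon--Riesz principle, using the exact identity $||U(f)||_2^2=\frac{1}{2\pi}\int|\hat f|^2 m$ with $m=\sum_j|\hat\psi_j|^2$. Both proofs finish with the same subsequence-of-subsequences argument. What each approach buys: yours is shorter and softer, avoiding both the verification of the Riesz-Fr\'echet-Kolmogorov hypotheses (whose tightness condition in the paper itself rests on the convergence of the moduli) and the somewhat delicate division by $\hat\psi_j$; the price is that it leans entirely on the Hilbert structure, in particular on $||U(\cdot)||_2$ being \emph{exactly} a Hilbert norm of $f$, whereas the paper's construction recovers the limit $g$ and the strong convergence by hand, which is the reason its remark about extension to families $(\psi_{j,\gamma})_{j\in\Z,\gamma\in\Gamma}$ on $\R^d$ is immediate --- though your argument adapts to that setting just as well, since the norm identity persists there.
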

\begin{proof}

What we have to prove is the following: if $(U(f_n))_{n\in\N}$ converges towards a limit $v\in L^2_\Z(\R)$, then $v=U(g)$ for some $g\in L^2_+(\R)/S^1$ and $f_n\to g$ in $L^2_+(\R)/S^1$.

So let $(U(f_n))_{n\in\N}$ be a sequence of elements in $I_U$, which converges in $L^2_\Z(\R)$. Let $v=(h_j)_{j\in\Z}\in L^2_{\mathbb{Z}}(\R)$ be the limit. We show that $v\in I_U$.

\begin{lem}\label{lem:tot_bound}
For all $j\in\Z$, $\{f_n\star\psi_j\}_{n\in\N}$ is relatively compact in $L^2(\R)$ (that is, the closure of this set in $L^2(\R)$ is compact).
\end{lem}

The proof of this lemma is given in \ref{app:lemmas_weak_stability}. It uses the Riesz-Fr\'echet-Kolmogorov theorem, which gives an explicit characterization of the relatively compact subsets of $L^2(\R)$.

For every $j\in\Z$, $\{f_n\star\psi_j\}_{n\in\N}$ is thus included in a compact subset of $L^2(\R)$. In a compact set, every sequence admits a converging subsequence: there exists $\phi:\N\to\N$ injective such that $(f_{\phi(n)}\star\psi_j)_{n\in\N}$ converges in $L^2(\R)$. Actually, we can choose $\phi$ such that $(f_{\phi(n)}\star\psi_j)_n$ converges for any $j$ (and not only for a single one). We donote by $l_j$ the limits.

\begin{lem}[Proof in \ref{app:lemmas_weak_stability}]\label{lem:existence_g}
There exists $g\in L^2_+(\R)$ such that $l_j=g\star\psi_j$ for every $j$. Moreover, $f_{\phi(n)}\to g$ in $L^2(\R)$.
\end{lem}

As $U$ is continuous, $U(g)=\underset{n}{\lim}\,U(f_{\phi(n)})=v$. So $v$ belongs to $I_U$.

The $g$ such that $U(g)=v$ is uniquely defined in $L^2_+(\R)/S^1$ because $U$ is injective (it does not depend on the choice of $\phi$). We must now show that $f_n\to g$.

From the lemma \ref{lem:existence_g}, $(f_n)_n$ admits a subsequence $(f_{\phi(n)})$ which converges to $g$. By the same reasoning, every subsequence $(f_{\psi(n)})_{n}$ of $(f_n)_n$ admits a subsequence which converges to $g$. This implies that $(f_n)_n$ globally converges to $g$.




\end{proof}

\begin{rem}
The same demonstration gives a similar result for wavelets on $\R^d$, of the form $(\psi_{j,\gamma})_{j\in\Z,\gamma\in\Gamma}$, for $\Gamma$ a finite set of parameters.
\end{rem}

\section{The reconstruction is not uniformly continuous\label{s:non_uniform_continuity}}

The theorem \ref{thm:weak_stability} states that the operator $U:f\to\{|f\star\psi_j|\}_{j\in\Z}$ has a continuous inverse $U^{-1}$, when it is invertible. However, $U^{-1}$ is not uniformly continuous. Indeed, for any $\epsilon>0$, there exist $g_1,g_2\in L^2_+(\R)/S^1$ such that:
\begin{equation}\label{eq:def_non_uniform}
||U(g_1)-U(g_2)||<\epsilon
\quad\mbox{but}\quad
||g_1-g_2||\geq 1
\end{equation}

In this section, we describe a way to construct such ``instable'' pairs $(g_1,g_2)$: we start from any $g_1$ and modulate each $g_1\star \psi_j$ by a low-frequency phase. We then (approximately) invert this modified wavelet transform and obtain $g_2$.

 This construction seems to be ``generic'' in the sense that it includes all the instabilities that we have been able to observe in practice.

\subsection{A simple example}


To begin with, we give a simple example of instabilities and relate it to known results about the stability in general phase retrieval problems.

In phase retrieval problems with (a finite number of) real measurements, the stability of the reconstruction operator is characterized by the following theorem (\citep{bandeira_stability}, \citep{balan_wang}).
\begin{thm}
Let $A\in\R^{m\times n}$ be a measurement matrix. For any $S\subset\{1,...,m\}$, we denote by $A_S$ the matrix obtained by discarding the rows of $A$ whose indexes are not in $S$. We call $\lambda^2_S$ the lower frame bound of $A_S$, that is, the largest real number such that:
\begin{equation*}
||A_Sx||_2^2\geq\lambda^2_S||x||_2^2
\quad\quad
\forall x\in\R^n
\end{equation*}
Then, for any $x,y\in\R^n$:
\begin{equation*}
||\,|Ax|-|Ay|\,||_2\geq\left(\underset{S}{\min}\sqrt{\lambda^2_S+\lambda^2_{S^c}}\right).\min(||x-y||_2,||x+y||_2)
\end{equation*}
Moreover, $\underset{S}{\min}\sqrt{\lambda^2_S+\lambda^2_{S^c}}$ is the optimal constant.
\end{thm}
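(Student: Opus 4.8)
The plan is to reduce the whole statement to one elementary pointwise identity and then read off both the inequality and its sharpness from it. First I would record that for any real scalars $u,v$ one has $(|u|-|v|)^2=\min\big((u-v)^2,(u+v)^2\big)$: when $uv\geq 0$ the minimum is attained by $(u-v)^2$ and equals $(|u|-|v|)^2$, and when $uv\leq 0$ it is attained by $(u+v)^2$, again equal to $(|u|-|v|)^2$. Applying this coordinatewise with $u=(Ax)_i$, $v=(Ay)_i$ gives
\begin{equation*}
||\,|Ax|-|Ay|\,||_2^2=\sum_{i=1}^m\min\Big(\big((A(x-y))_i\big)^2,\big((A(x+y))_i\big)^2\Big).
\end{equation*}

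For the lower bound, I would fix $x,y$ and set $S=\{i:\ ((A(x-y))_i)^2\leq((A(x+y))_i)^2\}$, the coordinates on which the first term realizes the minimum. With this (signal-dependent) choice the identity becomes an exact splitting, and the definition of the lower frame bounds of $A_S$ and $A_{S^c}$ yields
\begin{equation*}
||\,|Ax|-|Ay|\,||_2^2=||A_S(x-y)||_2^2+||A_{S^c}(x+y)||_2^2\geq\lambda_S^2||x-y||_2^2+\lambda_{S^c}^2||x+y||_2^2.
\end{equation*}
Bounding each of $||x-y||_2^2$ and $||x+y||_2^2$ below by $\min(||x-y||_2^2,||x+y||_2^2)$ and then replacing $\lambda_S^2+\lambda_{S^c}^2$ by its minimum over all subsets gives
\begin{equation*}
||\,|Ax|-|Ay|\,||_2^2\geq\Big(\underset{S'}{\min}\,(\lambda_{S'}^2+\lambda_{S'^c}^2)\Big)\min(||x-y||_2^2,||x+y||_2^2),
\end{equation*}
which is the claimed inequality after taking square roots (the case $x=\pm y$ being trivial).

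For optimality I would exhibit a single pair attaining equality. Let $S^*$ minimize $\lambda_S^2+\lambda_{S^c}^2$, and pick unit vectors $u,v$ with $||A_{S^*}u||_2=\lambda_{S^*}$ and $||A_{S^{*c}}v||_2=\lambda_{S^{*c}}$ (the bottom singular vectors, which exist in finite dimension). Setting $x=u+v$ and $y=v-u$, so that $x-y=2u$, $x+y=2v$ and $\min(||x-y||_2^2,||x+y||_2^2)=4$, the pointwise identity together with $\min(\alpha,\beta)\leq\alpha$ and $\min(\alpha,\beta)\leq\beta$ gives
\begin{equation*}
||\,|Ax|-|Ay|\,||_2^2=\sum_i\min\Big((2(Au)_i)^2,(2(Av)_i)^2\Big)\leq 4||A_{S^*}u||_2^2+4||A_{S^{*c}}v||_2^2=4(\lambda_{S^*}^2+\lambda_{S^{*c}}^2).
\end{equation*}
So this pair has ratio at most $\lambda_{S^*}^2+\lambda_{S^{*c}}^2$, and combined with the universal lower bound this forces equality, showing no larger constant is admissible.

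The step that looks as though it should be the obstacle — matching the sign pattern of the measurements to a prescribed subset — is exactly what the min-identity dissolves. In the lower bound the subset $S$ is allowed to depend on $(x,y)$, so no alignment has to be engineered; in the optimality construction I use only the one-sided estimates $\min(\alpha,\beta)\leq\alpha,\beta$, which hold regardless of the signs of $(Au)_i$ and $(Av)_i$, so I never need to force the critical singular directions into a particular sign configuration. The only genuine hypothesis is finite dimensionality, ensuring that the lower frame bounds are attained and that the minimum over the finitely many subsets $S$ is indeed a minimum.
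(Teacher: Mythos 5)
Your proof is correct. One important point of comparison: the paper does not actually prove this theorem --- it states it as a known result, citing \citep{bandeira_stability} and \citep{balan_wang} --- so there is no in-paper proof to measure your argument against. Your argument is a correct, self-contained proof, and it is essentially the standard one from those references: the pointwise identity $(|u|-|v|)^2=\min\big((u-v)^2,(u+v)^2\big)$, the signal-dependent partition $S=\{i:\,((A(x-y))_i)^2\leq((A(x+y))_i)^2\}$ giving the exact splitting $\|\,|Ax|-|Ay|\,\|_2^2=\|A_S(x-y)\|_2^2+\|A_{S^c}(x+y)\|_2^2$ for the lower bound, and the pair $x=u+v$, $y=v-u$ built from bottom singular vectors of $A_{S^*}$ and $A_{S^{*c}}$ for sharpness. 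All steps check out, including the two points that need finite-dimensionality: the lower frame bounds are attained (smallest singular value of each $A_S$), and the minimum over the finitely many subsets $S$ exists. The squeeze at the end (upper bound $4(\lambda_{S^*}^2+\lambda_{S^{*c}}^2)$ from the one-sided estimates, matched against the universal lower bound) is a clean way to get equality, which indeed rules out any larger constant.
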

In the complex case, one can only show a weaker result.
\begin{thm}
Let $A\in\C^{m\times n}$ be a measurement matrix. There exist $x,y\in\C^n$ such that:
\begin{equation*}
||\,|Ax|-|Ay|\,||_2\leq\left(\underset{S}{\min}\sqrt{\lambda_S^2+\lambda_{S^c}^2}\right).\underset{|\eta|=1}{\min}(||x-\eta y||_2)
\end{equation*}
\end{thm}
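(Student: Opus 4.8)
The plan is to exhibit one explicit extremal pair $(x,y)$, built from the worst directions of the two sub-frames $A_S$ and $A_{S^c}$, for which the phase-retrieval discrepancy is as small as the real lower bound would predict while the quotient distance is no larger than in the real case. The whole point is that over $\C$ one can match the moduli row by row with \emph{different} local phases, which is impossible over $\R$. Concretely, I first fix a subset $S\subset\{1,\dots,m\}$ attaining the minimum, so that $C:=\min_S\sqrt{\lambda_S^2+\lambda_{S^c}^2}$ satisfies $C^2=\lambda_S^2+\lambda_{S^c}^2$. I choose a unit eigenvector $v_1$ of $A_S^*A_S$ for its smallest eigenvalue $\lambda_S^2$, so that $\|A_Sv_1\|_2=\lambda_S$, and similarly a unit $v_2$ with $\|A_{S^c}v_2\|_2=\lambda_{S^c}$. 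After multiplying $v_2$ by a suitable unit scalar — which changes neither its norm nor $\|A_{S^c}v_2\|_2$ — I may assume $\scal{v_1}{v_2}\in\R$. I then set $x=v_1+v_2$ and $y=v_2-v_1$, so that $x-y=2v_1$ and $x+y=2v_2$.

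Next I would bound the left-hand side. Writing $(Ax)_i$ for the $i$-th coordinate of $Ax$, the reverse triangle inequality gives both $\big||(Ax)_i|-|(Ay)_i|\big|\le|(A(x-y))_i|$ and $\big||(Ax)_i|-|(Ay)_i|\big|\le|(A(x+y))_i|$ for every $i$. Using the first estimate on the rows of $S$ and the second on the rows of $S^c$ yields
\[
\big\||Ax|-|Ay|\big\|_2^2\le\|A_S(x-y)\|_2^2+\|A_{S^c}(x+y)\|_2^2=4\|A_Sv_1\|_2^2+4\|A_{S^c}v_2\|_2^2=4C^2,
\]
hence $\||Ax|-|Ay|\|_2\le 2C$.

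Finally I compute the quotient distance. For $|\eta|=1$ one has $x-\eta y=(1+\eta)v_1+(1-\eta)v_2$, and expanding the squared norm the cross term is a real multiple of $\Im\scal{v_1}{v_2}$, which vanishes by our normalization; since $v_1,v_2$ are unit vectors this leaves $\|x-\eta y\|_2^2=|1+\eta|^2+|1-\eta|^2=4$ for \emph{every} such $\eta$. Thus $\min_{|\eta|=1}\|x-\eta y\|_2=2$, and combining with the previous bound gives
\[
\||Ax|-|Ay|\|_2\le 2C=C\cdot\min_{|\eta|=1}\|x-\eta y\|_2,
\]
which is the claim.

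The only genuinely delicate point — and the step I would treat most carefully — is the phase normalization of $v_2$. Without forcing $\scal{v_1}{v_2}$ to be real, the surviving cross term would let $\min_{|\eta|=1}\|x-\eta y\|_2$ drop strictly below $2$, and the target inequality could fail. Conceptually this is exactly where the complex case departs from the real one: the upper bound on $\||Ax|-|Ay|\|_2$ comes from aligning moduli with the local phase $+1$ on $S$ and $-1$ on $S^c$; over $\R$ these global signs are forced and one even obtains the \emph{matching lower} bound of the quoted real theorem, whereas over $\C$ it is precisely the per-row phase freedom that allows such a pair to exist, so the real constant $C$ can serve only as an upper bound for the complex stability constant.
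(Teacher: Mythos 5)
Your proof is correct. The paper itself states this theorem \emph{without} proof --- it is imported from the cited stability literature (\citep{bandeira_stability}, \citep{balan_wang}) --- so there is no internal argument to compare against; your construction (bottom unit eigenvectors $v_1$ of $A_S^*A_S$ and $v_2$ of $A_{S^c}^*A_{S^c}$ for a minimizing $S$, then $x=v_1+v_2$, $y=v_2-v_1$, with the reverse triangle inequality applied row-wise using $x-y$ on $S$ and $x+y$ on $S^c$) is precisely the standard one from that literature. Two details you handle correctly and that are genuinely needed: first, the normalization $\scal{v_1}{v_2}\in\R$, without which the cross term $-4\,\Im(\eta)\,\Im\scal{v_1}{v_2}$ survives and $\underset{|\eta|=1}{\min}\,||x-\eta y||_2$ drops strictly below $2$, breaking the final comparison; second, your pair satisfies $\underset{|\eta|=1}{\min}\,||x-\eta y||_2=2\ne 0$, which is what gives the statement content --- as literally written it is vacuous for $x=y$, and the whole point (as the paper's surrounding discussion makes clear) is to exhibit a pair that is far apart in the quotient metric yet has measurement moduli differing only by $\sqrt{\lambda_S^2+\lambda_{S^c}^2}$ times that distance.
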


Consequently, if the set of measurements can be divided in two parts $S$ and $S^c$ such that $\lambda_S^2$ and $\lambda_{S^c}^2$ are very small, then the reconstruction is not stable.

Such a phenomenon occurs in the case of the wavelet transform. We define:
\begin{equation*}
S=\{\psi_j\mbox{ s.t. }j\geq 0\}\mbox{ and }S^c=\{\psi_j\mbox{ s.t. }j<0\}
\end{equation*}
Let us fix a small $\epsilon>0$. We choose $f_1,f_2\in L^2(\R)$ such that:
\begin{gather*}
\hat f_1(x)=0\mbox{ if }|x|<1/\epsilon
\quad\mbox{ and }\quad
\hat f_2(x)=0\mbox{ if }x\notin[-\epsilon;\epsilon]
\end{gather*}
For every $\psi_j\in S$, $f_1\star\psi_j\approx 0$ because the characteristic frequency of $\psi_j$ is smaller than $1$ and $f_1$ is a very high frequency function. So:
\begin{equation*}
|(f_1+f_2)\star\psi_j|\approx|f_2\star\psi_j|=|-f_2\star\psi_j|\approx|(f_1-f_2)\star\psi_j|
\end{equation*}
And similarly, for $\psi_j\in S^c$, $f_2\star\psi_j\approx 0$ and:
\begin{equation*}
|(f_1+f_2)\star\psi_j|\approx|f_1\star\psi_j|\approx|(f_1-f_2)\star\psi_j|
\end{equation*}
As a consequence:
\begin{equation*}
\{|(f_1+f_2)\star\psi_j|\}_{j\in\Z}\approx\{|(f_1-f_2)\star\psi_j|\}_{j\in\Z}
\end{equation*}
Nevertheless, $f_1+f_2$ and $f_1-f_2$ may not be close in $L^2(\R)/S^1$: $g_1=f_1+f_2$ and $g_2=f_1-f_2$ satisfy \eqref{eq:def_non_uniform}.

\begin{figure}
\psfrag{Titre1}{(a)}
\psfrag{Titre2}{(b)}
\psfrag{Titre3}{(c)}
\psfrag{Titre4}{(d)}
\includegraphics[scale = 0.55]{figures/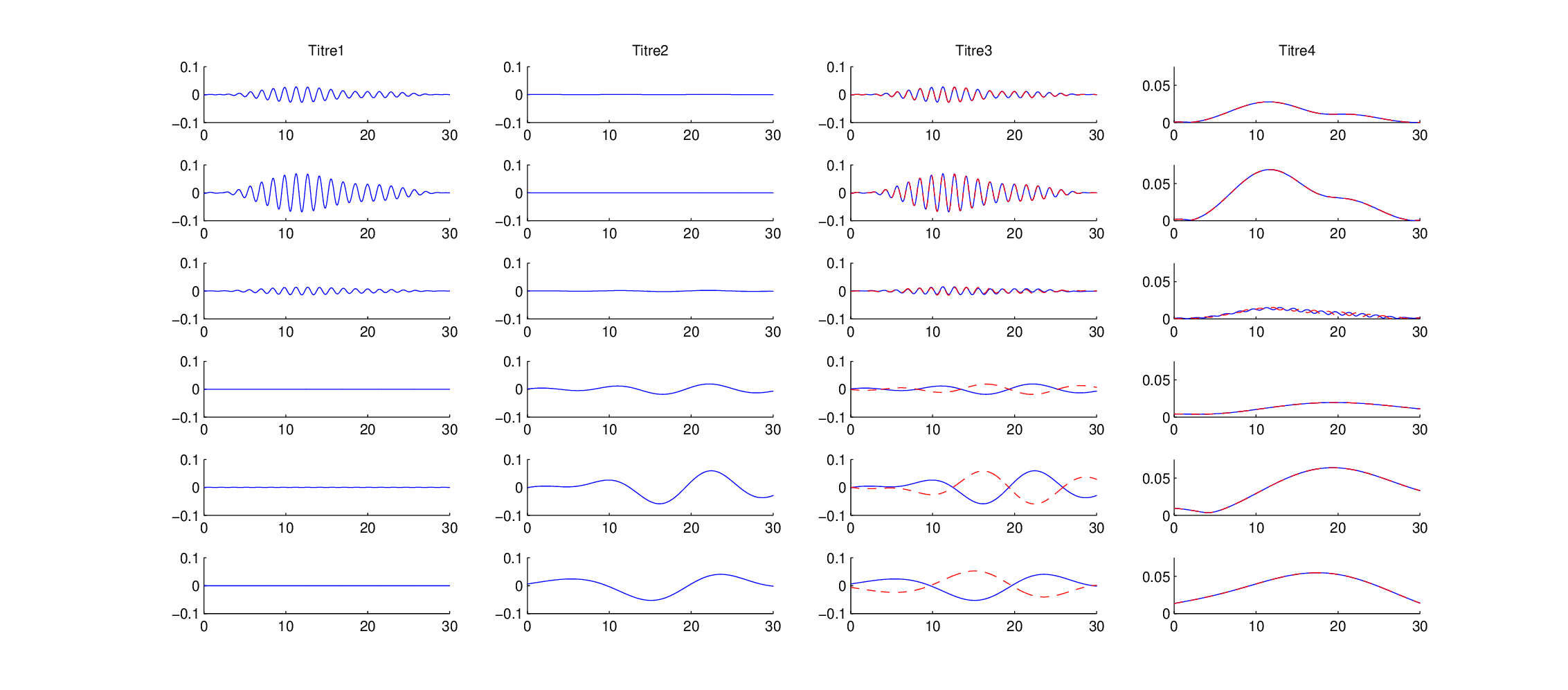}
\caption{(a) Wavelet transform of $f_1$ (b) Wavelet transform of $f_2$ (c) Wavelet transform of $f_1+f_2$ (solid blue) and $f_1-f_2$ (dashed red) (d) Modulus of the wavelet transforms of $f_1+f_2$ and $f_1-f_2$; the two modulus are almost equal\\
In each column, each graph corresponds to a specific frequency; the highest frequency is on top and the lowest one at bottom.
For complex functions, only the real part is displayed.\label{fig:inst}}
\end{figure}

The figure \ref{fig:inst} displays an example of this kind.

\subsection{A wider class of instabilities\label{ss:a_wider_class}}

We now describe the construction of more general ``instable'' pairs $(g_1,g_2)$.


\noindent Let $g_1\in L^2(\R)$ be any function. We aim at finding $g_2\in L^2(\R)$ such that, for all $j\in\Z$:
\begin{equation}\label{eq:quasi_equality_in_modulus}
(g_1\star\psi_j)e^{i\phi_j}\approx  g_2\star\psi_j
\end{equation}
for some real functions $\phi_j$.

In other words, we must find phases $\phi_j$ such that $(g_1\star\psi_j)e^{i\phi_j}$ is approximately equal to the wavelet transform of some $g_2\in L^2(\R)$. Any phases $\phi_j(t)$ which vary slowly both in $t$ and in $j$ satisfy this property.

Indeed, if the $\phi_j(t)$ vary ``slowly enough'', we set:
\begin{equation*}
g_2=\underset{j\in\Z}{\sum}\left((g_1\star\psi_j)e^{i\phi_j}\right)\star\tilde{\psi}_j
\end{equation*}
where $\{\tilde\psi_j\}_{j\in\Z}$ are the dual wavelets associated to $\{\psi_j\}$.

Then, for all $k\in\Z,t\in\R$:
\begin{align*}
g_2\star\psi_k(t)&=\underset{j\in\Z}{\sum}\left((g_1\star\psi_j)e^{i\phi_j}\right)\star\tilde\psi_j\star\psi_k(t)\\
&=\underset{j\in\Z}{\sum}\int_\R e^{i\phi_j(t-u)}(g_1\star\psi_j)(t-u)(\tilde\psi_j\star\psi_k)(u)\,du
\end{align*}
\begin{align*}
(g_1\star\psi_k(t))e^{i\phi_k(t)}&=e^{i\phi_k(t)}\underset{j\in\Z}{\sum}(g_1\star\psi_j)\star(\tilde{\psi_j}\star\psi_k)(t)\\
&=\underset{j\in\Z}{\sum}\int_\R e^{i\phi_k(t)}(g_1\star\psi_j)(t-u)(\tilde\psi_j\star\psi_k)(u)\,du
\end{align*}
So:
\begin{equation}
\label{eq:approx_error}
g_2\star\psi_k(t)-(g_1\star\psi_k(t))e^{i\phi_k(t)}
=\underset{j\in\Z}{\sum}\int_\R (e^{i\phi_j(t-u)} -e^{i\phi_k(t)})(g_1\star\psi_j)(t-u)(\tilde\psi_j\star\psi_k)(u)\,du
\end{equation}

The function $\tilde\psi_j\star\psi_k(u)$ is negligible if $j$ is not of the same order as $k$ or if $u$ is too far away from $0$. It means that, for some $C\in\N,U\in\R$ (which may depend on $k$):
\begin{equation*}
g_2\star\psi_k(t)-(g_1\star\psi_k(t))e^{i\phi_k(t)}
\approx
\underset{|j-k|\leq C}{\sum}\int_{[-U;U]} (e^{i\phi_j(t-u)} -e^{i\phi_k(t)})(g_1\star\psi_j)(t-u)(\tilde\psi_j\star\psi_k)(u)\,du
\end{equation*}
If $\phi_j(t-u)$ does not vary much over $[k-C;k+C]\times [-U;U]$, it gives the desired relation:
\begin{equation*}
g_2\star\psi_k(t)-(g_1\star\psi_k(t))e^{i\phi_k(t)}
\approx 0
\end{equation*}
which is \eqref{eq:quasi_equality_in_modulus}.

To summarize, we have described a way to construct $g_1,g_2\in L^2(\R)$ such that $|g_1\star\psi_j|\approx|g_2\star\psi_j|$ for all $j$. The principle is to multiply the wavelet transform of $g_1$ by any set of phases $\{e^{i\phi_j(t)}\}_{j\in\Z}$ whose variations are slow enough in $j$ and $t$.

How slow the variations must be depends on $g_1$. Indeed, at the points $(j,t)$ where $g_1\star\psi_j(t)$ is small, the phase may vary more rapidly because, then, the presence of $g_1\star\psi_j(t-u)$ in \eqref{eq:approx_error} compensates for a bigger $(e^{i\phi_j(t-u)}-e^{i\phi_k(t)})$.

All instabilities $g_1,g_2$ that we were able to observe in practice were of the form we described: each time, the wavelet transforms of $g_1$ and $g_2$ were equal up to a phase whose variation was slow in $j$ and $t$, except at the points where $g_1\star\psi_j$ was small.

\section{Strong stability result\label{s:strong_stability_result}}

The goal of this section is to give a partial formal justification to the fact that has been non-rigorously discussed in section \ref{ss:a_wider_class}: when two functions $g_1,g_2$ satisfy $|g_1\star\psi_j|\approx|g_2\star\psi_j|$ for all $j$, then the wavelet transforms $\{g_1\star\psi_j(t)\}_j$ and $\{g_2\star\psi_j(t)\}_j$ are equal up to a phase whose variation is slow in $t$ and $j$, except eventually at the points where $|g_1\star\psi_j(t)|$ is small.

In the whole section, we consider $f^{(1)},f^{(2)}$ two non-zero functions. We denote by $F^{(1)},F^{(2)}$ the holomorphic extensions defined in \eqref{eq:def_holomorphic_extension}. We recall that, for all $j\in\Z$:
\begin{equation}\label{eq:recall_F_f_psi_j}
f\star\psi_j(x)=\frac{a^{pj}}{2}F(x+ia^j)
\quad\quad\forall x\in\R
\end{equation}
We define:
\begin{equation*}
N_j=\underset{x\in\R,s=1,2}{\sup}|f^{(s)}\star\psi_j(x)|
\end{equation*}

\subsection{Main principle\label{ss:main_principle}}

From $|f\star\psi_j|$, one can calculate $|f\star\psi_j|^2$ and thus, from \eqref{eq:recall_F_f_psi_j}, $|F(x+ia^j)|^2$, for all $x\in\R$. But this last function coincides with $G_j(z)=F(z+ia^j)\overline{F(\overline{z}+ia^j)}$ on the horizontal line $\Im z=0$. As $G_j$ is holomorphic, it is uniquely determined by its values on one line. Consequently, $G_j$ is uniquely determined from $|f\star\psi_j|$.


Combining the functions $G_j$ for different values of $j$ allows to write explicit reconstruction formulas. The stability of these formulas can be studied, to obtain relations of the following form, for $K>0$:
\begin{align*}
\Big(|f^{(1)}\star\psi_k|&\approx|f^{(2)}\star\psi_k|\quad\forall k\in\Z\Big)
\\&\Rightarrow
\Big((f^{(1)}\star\psi_j)(\overline{f^{(1)}\star\psi_{j+K}})
\approx (f^{(2)}\star\psi_j)(\overline{f^{(2)}\star\psi_{j+K}})\quad
\forall j\in\Z
\Big)
\end{align*}
These relations imply that, for each $j$, the phases of $f^{(1)}\star\psi_j$ and $f^{(2)}\star\psi_j$ are approximately equal up to multiplication by the phase of $\frac{\overline{f^{(1)}\star\psi_{j+K}}}{\overline{f^{(2)}\star\psi_{j+K}}}$. If $K$ is not too small, this last phase is low-frequency, compared to the phase of $f^{(1)}\star\psi_j$ and $f^{(2)}\star\psi_j$.

\nl

The results we obtain are local, in the sense that if the approximate equality $|f^{(1)}\star\psi_k|\approx|f^{(2)}\star\psi_k|$ only holds on a (large enough) interval of $\R$, the equality $(f^{(1)}\star\psi_j)(\overline{f^{(1)}\star\psi_{j+K}})\approx (f^{(2)}\star\psi_j)(\overline{f^{(2)}\star\psi_{j+K}})$ still holds (also on an interval of $\R$).

\nl
Our main technical difficulty was to handle properly the fact that the $G_j$'s may have zeros (which is a problem because we need to divide by $G_j$ in order to get reconstruction formulas). We know that, when the wavelet transform has a lot of zeros, the reconstruction becomes unstable. On the other hand, if they are only a few isolated zeros, the reconstruction is stable and this must appear in our theorems.

They are several ways to write reconstruction formulas, which give different stability results. In the dyadic case $(a=2)$, there is a relatively simple method. We present it first. Then we handle the case where $a<2$. We do not consider the case where $a>2$. Indeed, it has less practical interest for us. Moreover, when the value of $a$ increases, the reconstruction becomes much less stable.

\subsection{Case $\boldsymbol{a=2}$}

In the dyadic case, we only assume that two consecutive moduli are approximately known, on an interval of $\R$: $|f\star\psi_j|$ and $|f\star\psi_{j+1}|$. We also assume that, on this interval, the moduli are never too close to $0$. Then we show these moduli stabily determine:
\begin{equation*}
\frac{f\star\psi_{j+2}}{f\star\psi_{j+1}}
\end{equation*}

\begin{thm}\label{thm:stability_dyadic_case}
Let $\epsilon,c,\lambda\in]0;1[,M>0$ be fixed, with $c\geq\epsilon$.

We assume that, for all $x\in[-M2^j;M2^j]$:
\begin{gather*}
\left||f^{(1)}\star\psi_j(x)|^2-|f^{(2)}\star\psi_j(x)|^2\right|\leq \epsilon N_j^2\\
\left||f^{(1)}\star\psi_{j+1}(x)|^2-|f^{(2)}\star\psi_{j+1}(x)|^2\right|\leq \epsilon N_{j+1}^2
\end{gather*}
and:
\begin{gather*}
|f^{(1)}\star\psi_j(x)|^2,|f^{(2)}\star\psi_j(x)|^2\geq c N_j^2\\
|f^{(1)}\star\psi_{j+1}(x)|^2,|f^{(2)}\star\psi_{j+1}(x)|^2\geq c N_{j+1}^2
\end{gather*}

Then, for all $x\in[-\lambda^2M2^j;\lambda^2M2^j]$:
\begin{equation*}
\left|\frac{f^{(1)}\star\psi_{j+2}}{f^{(1)}\star\psi_{j+1}}(x)
-\frac{f^{(2)}\star\psi_{j+2}}{f^{(2)}\star\psi_{j+1}}(x)
\right|\leq
\frac{A}{c}\left(\frac{N_{j-1}}{N_{j+1}}\right)^{4/3}\epsilon^{(1/3-\alpha_M)(4/5-\alpha_M')}
\end{equation*}
if $1/3-\alpha_M>0$ and $4/5-\alpha_M'>0$, where:
\begin{itemize}
\item $A$ is a constant which depends only on $p$.
\item $\alpha_M,\alpha'_M\to 0$ exponentially when $M\to+\infty$.
\end{itemize}
\end{thm}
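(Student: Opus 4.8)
The plan is to pass to the holomorphic picture of \eqref{eq:def_holomorphic_extension} and reduce everything to a single exact algebraic identity, after which the whole content of the theorem becomes a problem of quantitative analytic continuation. Following \ref{ss:main_principle}, for each $s\in\{1,2\}$ I set
\[
G_j^{(s)}(z)=F^{(s)}(z+i2^j)\,\overline{F^{(s)}(\overline z+i2^j)},
\]
which is holomorphic on the strip $\{-2^j<\Im z<2^j\}$ and satisfies $G_j^{(s)}(x)=|F^{(s)}(x+i2^j)|^2=(2/2^{pj})^2|f^{(s)}\star\psi_j(x)|^2$ for real $x$; thus $G_j^{(s)}$ is, up to an explicit factor, the function determined by $|f^{(s)}\star\psi_j|^2$. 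In this language the two modulus hypotheses say that $G_j^{(1)}-G_j^{(2)}$ and $G_{j+1}^{(1)}-G_{j+1}^{(2)}$ are bounded by $\epsilon$ (suitably normalised) on the real segment $[-M2^j,M2^j]$, and the lower bounds say that $G_j^{(s)},G_{j+1}^{(s)}$ stay above $c$ (normalised) there.

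First I would record the exact identity that expresses the target quantity through the observable $G$'s. Substituting the definitions and cancelling the common conjugate factor $\overline{F^{(s)}(\overline z+i2^{j+1})}$ gives, for each $s$,
\[
\frac{G_{j+1}^{(s)}(z)}{G_j^{(s)}(z-i2^j)}=\frac{F^{(s)}(z+i2^{j+1})}{F^{(s)}(z)}.
\]
Evaluating at $z=x+i2^{j+1}$ turns the left-hand side into $G_{j+1}^{(s)}(x+i2^{j+1})/G_j^{(s)}(x+i2^j)$ and, by \eqref{eq:recall_F_f_psi_j}, the right-hand side into $2^{-p}\,\frac{f^{(s)}\star\psi_{j+2}}{f^{(s)}\star\psi_{j+1}}(x)$. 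Hence the difference of ratios to be estimated is exactly the difference of the two quotients $G_{j+1}^{(s)}/G_j^{(s)}(\,\cdot-i2^j)$ at height $2^{j+1}$, and it is controlled by two ingredients: the size of $G_{j+1}^{(1)}-G_{j+1}^{(2)}$ and $G_j^{(1)}-G_j^{(2)}$ at the relevant complex points, and a lower bound for the denominators $|G_j^{(s)}(\,\cdot-i2^j)|$ there.

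The analytic quantity to estimate is therefore $G^{(1)}-G^{(2)}$ away from $\R$, knowing it is $\le\epsilon$ only on a finite segment of $\R$ and that each $G^{(s)}$ is bounded on the strip. For the upper bound I would use the maximum principle: $F^{(s)}$ is bounded and decays at infinity on each half-plane $\{\Im z\ge 2^{j-1}\}$, so $|F^{(s)}|$ at every height $\ge 2^{j-1}$ is dominated by its value on the line $\Im z=2^{j-1}$, that is by $N_{j-1}$. This supplies the $L^\infty$ bound on the working sub-strip and is the origin of the prefactor $(N_{j-1}/N_{j+1})^{4/3}$. With such an upper bound and the $\epsilon$-bound on $[-M2^j,M2^j]\subset\{\Im z=0\}$, a two-constants (Hadamard three-lines / harmonic-measure) estimate propagates the smallness to interior points as $\epsilon^{\theta}$, $\theta$ being the harmonic measure of the known side. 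Restricting $x$ to the shorter interval $[-\lambda^2M2^j,\lambda^2M2^j]$ keeps the evaluation points away from the vertical edges of the truncated strip, so that $\theta$ stays bounded below, while replacing the full line by a finite window costs only an error $\alpha_M$ (resp. $\alpha'_M$) equal to the harmonic measure of the two complementary rays, which decays exponentially in $M$. Carried out in two nested steps — one continuation reaching the line on which the quotient is formed, a second pushing the resulting estimate up to the evaluation height — the two exponents $1/3-\alpha_M$ and $4/5-\alpha'_M$ multiply, producing the stated power of $\epsilon$.

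The genuine obstacle I expect is the division by $G_j^{(s)}(\,\cdot-i2^j)$: the lower bound $cN_j^2$ is available only on $\R$, whereas the identity forces evaluation of the denominator at complex points where $G_j^{(s)}$ may vanish. This is precisely the difficulty flagged in \ref{ss:main_principle}. I would resolve it by showing that, on the region isolated by the $\lambda^2$-shrinkage, the holomorphic denominator cannot drop far below its real-line modulus — again a two-constants argument, now applied from below, combining the real-line lower bound with the upper bound from the $N$'s — so that the factor $1/c$ absorbs the worst case. Once the denominators are bounded away from $0$ on the working region, assembling the quotient bound from the two continuation estimates is routine and yields the constant $A=A(p)$.
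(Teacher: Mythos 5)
Your reduction is based on a correct algebraic identity, and your general toolkit (windowed three-lines/harmonic-measure estimates in strips, with exponents that multiply over two nested propagation steps and window-losses $\alpha_M,\alpha_M'$ decaying exponentially in $M$) is exactly the paper's Lemma \ref{lem:hole_on_one_line}. But the reduction itself puts the division in the wrong place, and the step you flag as "the genuine obstacle" is not repairable by the argument you propose. Unwinding your identity at $z=x+i2^{j+1}$, the denominator is
\begin{equation*}
G_j^{(s)}(x+i2^j)=F^{(s)}(x+i2^{j+1})\,\overline{F^{(s)}(x)},
\end{equation*}
so a lower bound on it requires a lower bound on the \emph{boundary} modulus $|F^{(s)}(x)|$ on $\R$, a quantity about which the hypotheses (which concern only the heights $2^j$ and $2^{j+1}$) say nothing and which can vanish. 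Moreover, the proposed fix --- a "two-constants argument applied from below" --- is not a valid principle for holomorphic functions: a function bounded above in a strip and bounded below on its boundary (or on a boundary window) can perfectly well vanish at interior points (transport $z\mapsto z$ on the unit disk to the strip by a conformal map: modulus $1$ on the boundary, a zero inside). To propagate lower bounds inward you would first have to exclude zeros of $F^{(s)}$ strictly between the lines, and nothing in the hypotheses does this; indeed the paper states in section \ref{ss:main_principle} that the possible zeros of the $G_j$'s were the main technical difficulty.

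The paper's proof is structured precisely to avoid ever needing such an interior lower bound. Instead of estimating a difference of quotients, it cross-multiplies so that only the conjugate-free, genuinely holomorphic combination
\begin{equation*}
H(z)=F^{(1)}(z+i3\cdot 2^j)F^{(2)}(z+i2^j)-F^{(2)}(z+i3\cdot 2^j)F^{(1)}(z+i2^j)
\end{equation*}
needs to be propagated. Concretely: a first application of Lemma \ref{lem:hole_on_one_line} to $G(z)=F^{(1)}(z+i2^{j+1})\overline{F^{(1)}(\overline z+i2^{j+1})}-F^{(2)}(z+i2^{j+1})\overline{F^{(2)}(\overline z+i2^{j+1})}$ bounds $F^{(1)}(x+i3\cdot2^j)\overline{F^{(1)}(x+i2^j)}-F^{(2)}(x+i3\cdot2^j)\overline{F^{(2)}(x+i2^j)}$ with exponent $1/3-\alpha_M$; a triangle inequality plus division by $\overline{F^{(1)}(x+i2^j)}\,\overline{F^{(2)}(x+i2^j)}$ --- legitimate because these points correspond to $f^{(s)}\star\psi_j(x)$, where the hypothesis gives $|F^{(s)}(x+i2^j)|\geq\sqrt c\,\mathcal N(2^j)$ --- converts this into a bound for $H$ on the real window; a second application of the lemma moves this bound to height $2^j$ with exponent $4/5-\alpha_M'$; and the final division by $F^{(1)}(x+i2^{j+1})F^{(2)}(x+i2^{j+1})$ is again at points controlled by the hypothesis at scale $j+1$. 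Every division happens where the data provide a lower bound; the quantities propagated by complex analysis are always zero-free-denominator-free. Your proposal, as written, cannot close because its central division has no such protection.
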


\begin{proof}[Principle of the proof]
Here, we only give a broad outline of the proof. A rigorous one is given in the appendix \ref{app:stability_dyadic_case}, with all the necessary technical details.



As explained in the paragraph \ref{ss:main_principle}, $|f^{(1)}\star\psi_{j+1}|$ uniquely determines the values of $z\to F^{(1)}(z+i2^{j+1})\overline{F^{(1)}(\overline{z}+i2^{j+1})}$ on the line $\Im z=0$. Thus, it uniquely determines all the values (because the function is holomorphic) and in particular (for $z=x+i2^j$):
\begin{equation*}
F^{(1)}(x+i3.2^j)\overline{F^{(1)}(x+i2^j)}\quad
\forall x\in\R
\end{equation*}
Moreover, this determination is a stable operation:
\begin{align*}
\Big(|f^{(1)}\star\psi_{j+1}(x)|^2&\approx|f^{(2)}\star\psi_{j+1}(x)|^2\quad\forall x\in\R
\Big)\\
&\Rightarrow
\Big(F^{(1)}(x+i3.2^j)\overline{F^{(1)}(x+i2^j)}
\approx F^{(2)}(x+i3.2^j)\overline{F^{(2)}(x+i2^j)}
\quad\forall x\in\R
\Big)
\end{align*}
If we divide this last expression by $|F^{(1)}(x+i2^j)|^2\approx|F^{(2)}(x+i2^j)|^2$ (whose values we know from $|f\star\psi_j|^2$):
\begin{equation*}
\frac{F^{(1)}(x+i3.2^j)}{F^{(1)}(x+i2^j)}
\approx
\frac{F^{(2)}(x+i3.2^j)}{F^{(2)}(x+i2^j)}
\quad\mbox{for }x\in\R
\end{equation*}
As previously, using the holomorphy of $F$ allows to replace, in the last expression, the real number $x$ by $x+i2^j$:
\begin{equation*}
\frac{F^{(1)}(x+i2^{j+2})}{F^{(1)}(x+i2^{j+1})}
\approx
\frac{F^{(2)}(x+i2^{j+2})}{F^{(2)}(x+i2^{j+1})}
\quad\mbox{for }x\in\R
\end{equation*}
By \eqref{eq:recall_F_f_psi_j}, this is the same as:
\begin{equation*}
\frac{f^{(1)}\star\psi_{j+2}}{f^{(1)}\star\psi_{j+1}}
\approx
\frac{f^{(2)}\star\psi_{j+2}}{f^{(2)}\star\psi_{j+1}}
\end{equation*}
\end{proof}

From this theorem, if $f^{(s)}\star\psi_{j+2}$ has no small values either on $[-\lambda^2M2^j;\lambda^2M2^j]$, then:
\begin{equation*}
\mbox{phase}(f^{(1)}\star\psi_{j+1})-\mbox{phase}(f^{(2)}\star\psi_{j+1})
\approx
\mbox{phase}(f^{(1)}\star\psi_{j+2})-\mbox{phase}(f^{(2)}\star\psi_{j+2})
\end{equation*}
If more than two consecutive components of the wavelet transform have almost the same modulus (and all these components do not come close to $0$), one can iterate this approximate equality. It gives:
\begin{equation*}
\mbox{phase}(f^{(1)}\star\psi_{j+1})-\mbox{phase}(f^{(2)}\star\psi_{j+1})
\approx
\mbox{phase}(f^{(1)}\star\psi_{j+K})-\mbox{phase}(f^{(2)}\star\psi_{j+K})
\end{equation*}
This holds for any $K\in\N^*$ but with an approximation error that becomes larger and larger as $K$ increases.

When $K$ is large enough, this means that $f^{(1)}\star\psi_{j+1}$ and $f^{(2)}\star\psi_{j+1}$ are equal up to a low-frequency phase.

\subsection{Case $\boldsymbol{a<2}$\label{ss:case_a_less_than_2}}

\textbf{Notations:} for this section, we fix:
\begin{itemize}
\item $j\in\Z$: the frequency of the component whose phase we want to estimate

\item $K\in\N^*$ such that $K\equiv 0[2]$: the number of components of the wavelet transform whose modulus are approximately equal

\item $\epsilon,\kappa\in]0;1[$: they will controle the difference between $|f^{(1)}\star\psi_j|$ and $|f^{(2)}\star\psi_j|$, as well as the minimal value of those functions.

\item $M>0$: we will assume that the approximate equality between the modulus holds on $[-Ma^{j+K};Ma^{j+K}]$.

\item $k\in\N^*$ such that $a^{-k}< 2-a$: this number will controle the stability with which one can derive informations about $f\star\psi_{l-1}$ from $|f\star\psi_l|$. Typically, for $a\leq 1.5$, we may take $k=3$.
\end{itemize}

We define:
\begin{itemize}
\item $J\in[j+K-1;j+K]$ such that $a^J=\frac{2}{a+1}a^{j+K}+\frac{a-1}{a+1}a^j$: we will prove that $f^{(1)}\star\psi_j$ and $f^{(2)}\star\psi_j$ are equal up to a phase which is concentrated around $a^J$ in frequencies (that is, a much lower-frequency phase than the phase of $f\star\psi_j$).

\item $c=1-\frac{a-1}{1-a^{-k}}\in]0;1[$
and $d_M=c-4\frac{e^{-\pi M/(K+2)}}{1-e^{-\pi M/(K+2)}}$, which converges exponentially to $c$ when $\frac{M}{K}$ goes to $\infty$.
\end{itemize}

\begin{thm}\label{thm:stability_a_less_than_2}
We assume that $\kappa\geq \epsilon^{2(1-c)}$.

We assume that, for $x\in[-Ma^{j+K};Ma^{j+K}]$ and $l=j+1,...,j+K$:
\begin{gather}
\left||f^{(1)}\star\psi_l(x)|^2-|f^{(2)}\star\psi_l(x)|^2\right|\leq\epsilon N_l^2\label{eq:thm_stab_hyp_1}\\
|f^{(1)}\star\psi_l(x)|^2,|f^{(2)}\star\psi_l(x)|^2\geq \kappa N_l^2\label{eq:thm_stab_hyp_2}
\end{gather}
Then, for any $x\in\left[- \frac{M a^{j+K}}{2};\frac{ Ma^{j+K}}{2}\right]$, as soon as $d_M<1$:
\begin{align}
\frac{1}{N_JN_j}\left|\left(\overline{f^{(1)}\star\psi_J(x)}\right) \right.&\left.\left(f^{(1)}\star\psi_j(x)\right)-\left(\overline{f^{(2)}\star\psi_J(x)}\right) \left(f^{(2)}\star\psi_j(x)\right)\right|
\leq \frac{C_K}{\kappa^{K/4}}\epsilon^{d_M}
\label{eq:stability}
\end{align}
where $C_K=\frac{6}{1-\sqrt{\kappa}}\underset{s=0}{\overset{K/2-1}{\prod}}\left(a^{p(k-1)}\frac{N_{n_s-1-k}}{N_{n_s-2}}\right)$
\end{thm}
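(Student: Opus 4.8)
The plan is to transport the whole statement to the holomorphic functions $F^{(1)},F^{(2)}$ on $\H$ and to assemble the single cross term $\overline{F^{(s)}(x+ia^J)}\,F^{(s)}(x+ia^j)$ out of the auxiliary functions furnished by the main principle of \ref{ss:main_principle}. Using \eqref{eq:recall_F_f_psi_j}, hypotheses \eqref{eq:thm_stab_hyp_1}--\eqref{eq:thm_stab_hyp_2} say that, for each $l=j+1,\dots,j+K$ and all $x\in[-Ma^{j+K};Ma^{j+K}]$, the function
\[
G^{(s)}_l(z)=F^{(s)}(z+ia^l)\,\overline{F^{(s)}(\bar z+ia^l)}
\]
(which is holomorphic on the strip $|\Im z|<a^l$ and equals $|F^{(s)}(\cdot+ia^l)|^2$ on $\R$) satisfies $|G^{(1)}_l-G^{(2)}_l|\le(\mathrm{const})\,\epsilon$ on the real interval, together with a lower bound $\ge\kappa\,(\ldots)$ for the moduli. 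The target in \eqref{eq:stability} is exactly such a cross term with symmetry center $(a^j+a^J)/2=\frac{1}{a+1}(a^{j+1}+a^{j+K})$, which is \emph{not} an integer power of $a$; hence it cannot be read off any single $G_l$ and must be built by chaining several of them. The choice $a^J=\frac{2}{a+1}a^{j+K}+\frac{a-1}{a+1}a^j$, equivalently $a^J-a^j=\frac{2}{a+1}(a^{j+K}-a^j)$ and $a^{j+K}-a^J=\frac{a-1}{a+1}(a^{j+K}-a^j)$, is precisely the height at which the reflection scheme below closes up.

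The analytic engine is a quantitative continuation estimate. Each $G^{(s)}_l$ is bounded on its strip, but $G^{(1)}_l\approx G^{(2)}_l$ is known only on the finite interval $[-Ma^{j+K};Ma^{j+K}]$, not on all of $\R$. I would invoke a two-constants (Hadamard three-lines / harmonic-measure) estimate: over the half-interval $[-Ma^{j+K}/2;Ma^{j+K}/2]$ the difference is controlled by $\epsilon^{\omega}$, where $\omega$ is a harmonic measure determined by the aspect ratio between the continuation reach and the interval length, with an exponential correction in $M/K$. This is the origin of the exponent $d_M$ and of its exponential convergence to $c$ as $M/K\to\infty$. The admissibility condition $a^{-k}<2-a$ is exactly what forces the limiting exponent $c=1-\frac{a-1}{1-a^{-k}}$ to be positive, i.e. what guarantees that one descent step retains a genuine positive power of $\epsilon$.

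The chaining then proceeds as in the dyadic case (Theorem \ref{thm:stability_dyadic_case}), iterated $K/2$ times. At each step one reflects a pair of heights about a suitable $a^l$, uses the main-principle identity to read a value of $G^{(s)}_l$ off the real line via the continuation estimate, divides by the lower-bounded, approximately equal modulus $|F^{(s)}(\cdot+ia^{l'})|^2\ge\kappa\,(\ldots)$ to turn a product into a ratio, and shifts the $x$-variable vertically by holomorphy to land on the next pair of heights. Each division costs a factor $\kappa^{-1/2}$ (hence $\kappa^{-K/4}$ over the $K/2$ steps) and a normalization factor $a^{p(k-1)}N_{n_s-1-k}/N_{n_s-2}$ coming from converting between the $F$ and $f\star\psi_l$ normalizations and the $N_l$ suprema, while the accumulated additive errors form a geometric series of ratio $\sqrt\kappa$, producing the prefactor $\tfrac{6}{1-\sqrt\kappa}$ in $C_K$. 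The assumption $\kappa\ge\epsilon^{2(1-c)}$ is what keeps each step's error $\epsilon^{(\text{step})}/\sqrt\kappa$ dominated by a still-positive power of $\epsilon$, so that the iteration does not destroy the bound.

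The \emph{main obstacle} is the quantitative continuation bookkeeping across the $K/2$ steps: one must prove the Hölder-continuation estimate with an exponent that degrades only by a controlled amount per step so the composite exponent stays $\ge d_M$; control the shrinking of the strips so that the conclusion survives on the half-interval $[-Ma^{j+K}/2;Ma^{j+K}/2]$ rather than an exponentially small one; and interleave this with divisions by quantities only lower-bounded by $\kappa$, which amplify errors. I expect to manage this by tracking a single scalar relative error at each step and establishing a one-step recursion of the form $(\text{new error})\le\sqrt\kappa\cdot(\text{old error})+(\text{continuation error})$, which telescopes to \eqref{eq:stability}; making the three effects recombine into exactly the clean exponent $d_M$ and constant $C_K/\kappa^{K/4}$ is where the technical work lies, and is relegated to appendix \ref{app:stability_dyadic_case}-style computations.
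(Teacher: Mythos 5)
Your proposal follows essentially the same route as the paper's proof: the same reduction to the holomorphic cross terms $G^{(s)}_l$, the same two-constants/harmonic-measure continuation lemmas on bands with a ``hole'' (which is exactly where $c$, $d_M$ and the condition $a^{-k}<2-a$ enter), and the same $K/2$-step chaining with reflections about suitable heights and divisions by moduli bounded below by $\kappa$, producing the factors $\kappa^{-K/4}$, $\frac{6}{1-\sqrt{\kappa}}$ and $a^{p(k-1)}N_{n_s-1-k}/N_{n_s-2}$ in $C_K$. One slip: your final one-step recursion should read $(\text{new error})\leq\bigl((\text{old error})+(\text{continuation error})\bigr)/\sqrt{\kappa}$ --- consistent with your own earlier, correct remark that each division \emph{amplifies} by $\kappa^{-1/2}$ --- since the version you wrote, with $\sqrt{\kappa}\cdot(\text{old error})$, would telescope to a bounded error and could never produce the $\kappa^{-K/4}$ blow-up appearing in \eqref{eq:stability}.
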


As in the dyadic case $a=2$, this theorem shows that, if two functions $f^{(1)}$ and $f^{(2)}$ have their wavelet transforms almost equal in moduli, then, for each $j$, $f^{(1)}\star\psi_j\approx f^{(2)}\star\psi_j$ up to multiplication by a low-frequency function.

In contrast to the dyadic case, we are not able to show directly that:
\begin{equation*}
\frac{f^{(1)}\star\psi_j}{f^{(2)}\star\psi_j}\approx \frac{f^{(1)}\star\psi_{j+1}}{f^{(2)}\star\psi_{j+1}}
\end{equation*}
Because of that, the inequality we get is less good than in the dyadic case: the bound in \eqref{eq:stability} is exponential in $K$ instead of being proportional to $K$.

With a slightly different method, we could have obtained a better bound, proportional to $K$. This better bound would have been valid for any $a>1$, but under the condition that $f\star\psi_l$ does not come close to $0$ for some explicit non-integer values of $l$, which would have been rather unsatisfying because, in practice, these values of $l$ do not seem to play a particular role.

\begin{proof}[Principle of the proof]
The full proof may be found in appendix \ref{app:stability_a_less_than_2}. Its principle is to show, by induction over $s=0,...,K/2$, that:
\begin{equation}\label{eq:recurrence_formula}
(\overline{f^{(1)}\star\psi_{J_s}})(f^{(1)}\star\psi_{j+K-2s})
\approx
(\overline{f^{(2)}\star\psi_{J_s}})(f^{(2)}\star\psi_{j+K-2s})
\end{equation}
where $J_s$ is an explicit number in the interval $[j+K-1;j+K]$.

For $s=0$, we set $J_s=j+K$ and \eqref{eq:recurrence_formula} just says:
\begin{equation*}
\left|f^{(1)}\star\psi_{j+K}\right|^2\approx
\left|f^{(2)}\star\psi_{j+K}\right|^2
\end{equation*}
which is true by hypothesis.

Then, to go from $s$ to $s+1$, we use the fact that:
\begin{equation}\label{eq:recurrence_principle}
(\overline{f^{(1)}\star\psi_{j+K-2s}})(f^{(1)}\star\psi_l)
\approx
(\overline{f^{(2)}\star\psi_{j+K-2s}})(f^{(2)}\star\psi_l)
\end{equation}
if we choose $l$ such that $a^l=2a^{j+K-2s-1}-a^{j+K-2s}$: we can check that, up to multiplication by a constant, $(\overline{f^{(r)}\star\psi_{j+K-2s}})(f^{(r)}\star\psi_l)$ is the evaluation on the line $a^{j+K-2s}-a^{j+K-2s-1}$ of the holomorphic extension of $|f^{(r)}\star\psi_{j+K-2s-1}|^2$. The holomorphic extension is a stable transformation (in a sense that has to be made precise). As $|f^{(1)}\star\psi_{j+K-2s-1}|^2\approx|f^{(2)}\star\psi_{j+K-2s-1}|^2$, this implies \eqref{eq:recurrence_principle}.

Multiplying \eqref{eq:recurrence_formula} and \eqref{eq:recurrence_principle} and dividing by $|f^{(1)}\star\psi_{j+K-2s}|^2\approx|f^{(2)}\star\psi_{j+K-2s}|^2$ yields:
\begin{equation}\label{eq:recurrence_almost}
(\overline{f^{(1)}\star\psi_{J_s}})(f^{(1)}\star\psi_l)
\approx
(\overline{f^{(2)}\star\psi_{J_s}})(f^{(2)}\star\psi_l)
\end{equation}
If $J_{s+1}$ is suitably chosen, $(f^{(r)}\star\psi_{J_{s+1}})(f^{(r)}\star\psi_{j+K-2(s+1)})$ may be seen as the restriction to a line of the holomorphic extension of $(\overline{f^{(r)}\star\psi_{J_s}})(f^{(r)}\star\psi_l)$. Because, again, taking the holomorphic extension is relatively stable, the relation \eqref{eq:recurrence_almost} implies the recurrence hypothesis \eqref{eq:recurrence_formula} at order $s+1$.

For $s=K/2$, the recurrence hypothesis is equivalent to the stated result.
\end{proof}

\section{Numerical experiments\label{s:numerical_experiments}}

In the previous section, we proved a form of stability for the phase retrieval problem associated to the Cauchy wavelet transform. The proof implicitely relied on the existence of an explicit reconstruction algorithm. In this section, we describe a practical implementation of this algorithm and its performances.

The main goal of our numerical experiments is to investigate the issue of stability. The theorems \ref{thm:stability_dyadic_case} and \ref{thm:stability_a_less_than_2} prove that the reconstruction is, in some sense, stable, at least when the wavelet transform does not have small values. Are these results confirmed by the implementation? To what extent does the presence of small values make the reconstruction unstable?

As we will see, our algorithm fails when large parts of the wavelet transform are close to zero. In all other cases, it seems to succeed and to be stable to noise, even when the amount of noise over the wavelet transform is relatively high ($\sim$ 10\%). The presence of a small number of zeroes in the wavelet transform is not a problem.




\nl
The code is available at \url{http://www.di.ens.fr/~waldspurger/cauchy_phase_retrieval.html}, along with examples of reconstruction for audio signals. It only handles the dyadic case $a=2$ but could easily be extended to other values of $a$.

\subsection{Description of the algorithm}

In practice, we must restrict our wavelet transform to a finite number of components. So we only consider the $|f\star\psi_j|$ for $j\in\{J_{\min},...,J_{\max}\}$. To compensate for the loss of the $|f\star\psi_j|$ with $j>J_{\max}$, we give to our algorithm an additional information about the low-frequency, under the form of $f\star\phi_{J_{\max}}$, where $\hat\phi_{J_{\max}}$ is negligible outside an neighborhood of $0$ of size $\sim a^{-J_{\max}}$.

\nl
The algorithm takes as input the functions $|f\star\psi_{J_{\min}}|,|f\star\psi_{J_{\min}+1}|,...,|f\star\psi_{J_{\max}}|,f\star\phi_{J_{\max}}$, for some unknown $f$, and tries to reconstruct $f$. The input functions may be contaminated by some noise. To simplify the implementation, we have assumed that the probability distribution of the noise was known.

For any real numbers $j,k_1,k_2$ such that $j\in\Z$ and $2.a^j=a^{k_1}+a^{k_2}$, it comes from the reasoning of the previous section that $|f\star\psi_j|$ uniquely determines $(f\star\psi_{k_1}).(\overline{f\star\psi_{k_2}})$. More precisely, we have, for all $\omega\in\R$:
\begin{equation}\label{eq:k1_k2_from_j}
\widehat{(f\star\psi_{k_1}).(\overline{f\star\psi_{k_2}})}(\omega)
=\widehat{|f\star\psi_j|^2}(\omega)e^{(a^{k_2}-a^j)\omega}\frac{a^{k_1+k_2}}{a^{2j}}
\end{equation}
The algorithm begins by fixing real numbers $k_{J_{\min}-1},k_{J_{\min}},...,k_{J_{\max}}$ such that:
\begin{gather}\label{eq:sequence_k_j}
k_{J_{\min}-1}<J_{\min}<k_{J_{\min}}<J_{\min}+1<...<J_{\max}<k_{J_{\max}}\\
\forall j,\quad\quad
2.a^j = a^{k_{j-1}}+a^{k_j}\nonumber
\end{gather}
Then, for all $j$, it applies \eqref{eq:k1_k2_from_j} to determine $g_j\overset{\mbox{\small def}}{=}(f\star\psi_{k_{j-1}}).(\overline{f\star\psi_{k_j}})$. Because of the exponential function present in \eqref{eq:k1_k2_from_j}, the $g_j$ may take arbitrarily high values in the frequency band $\{(a^{k_2}-a^j)\omega\gg 1\}$. To avoid this, we truncate the high frequencies of $g_j$.

The function $f\star\psi_{k_{J_{\max}}}$ may be approximately determined from $f\star\phi_{J_{\max}}$. From this function and the $g_j$, the algorithm estimates all the $f\star\psi_{k_j}$. As this estimation involves divisions by functions which may be close to zero at some points, it is usually not very accurate. In particular, the estimated set $\{f\star\psi_{k_j}\}_j$ do not generally satisfy the constraint that it must belong to the range of the function $f\in L^2(\R)\to\{f\star\psi_{k_j}\}_{J_{\min}-1\leq j\leq J_{\max}}$.

Thus, in a second step, the algorithm refines the estimation. To do this, it attempts to minimize an error function which takes into account both the fact that $(f\star\psi_{k_{j-1}}).(\overline{f\star\psi_{k_j}})$ is known for every $j$ and the fact that $\{f\star\psi_{k_{j-1}}\}_{J_{\min}-1\leq j\leq J_{\max}}$ must belong to the range of $f\in L^2(\R)\to\{f\star\psi_{k_j}\}_{J_{\min}-1\leq j\leq J_{\max}}$. The minimization is performed by gradient descent, using the previously found estimations as initialization.

Finally, we deduce $f$ from the $f\star\psi_{k_{j-1}}$ and refine this estimation one more time by a few steps of the classical Gerchberg-Saxton algorithm (\citep{gerchberg}).

\nl
The principle of the algorithm is summarized by the pseudocode \ref{alg:rec}.

\begin{algorithm}[ht] 
\caption{Reconstruction algorithm} 
\label{alg:rec} 
\begin{algorithmic} [1]
\REQUIRE $\{|f\star\psi_j|\}_{J_{\min}\leq j\leq J_{\max}}$ and $f\star\phi_{J_{\max}}$
\STATE Choose $k_{J_{\min}-1},...,k_{J_{\max}}$ as in \eqref{eq:sequence_k_j}.
\FORALL {$j$}
\STATE Determine $g_j=(f\star\psi_{k_{j-1}}).(\overline{f\star\psi_{k_j}})$ from $|f\star\psi_j|^2$.
\ENDFOR
\STATE Determine $f\star\psi_{k_{J_{\max}}}$ from $f\star\phi_{J_{\max}}$.
\FORALL {$j$}
\STATE Estimate $h_j\approx f\star\psi_{k_j}$.
\ENDFOR
\STATE Refine the estimation with a gradient descent.
\STATE Deduce $f$ from $\{f\star\psi_{k_j}\}_{J_{\min}-1\leq j\leq J_{\max}}$.
\STATE Refine the estimation of $f$ with the Gerchberg-Saxton algorithm.
\ENSURE $f$
\end{algorithmic} 
\end{algorithm} 

\subsection{Input signals}

\begin{figure}
\begin{minipage}[b]{0.3\textwidth}
\begin{center}
\includegraphics[width=\textwidth]{images_modules/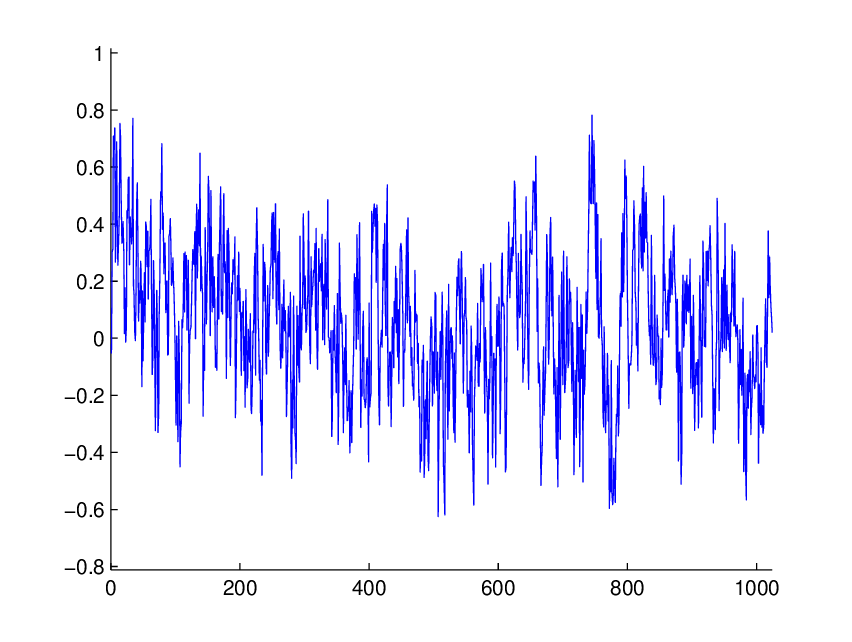}\\
(a)
\end{center}
\end{minipage}
\hskip 0.3cm
\begin{minipage}[b]{0.3\textwidth}
\begin{center}
\includegraphics[width=\textwidth]{images_modules/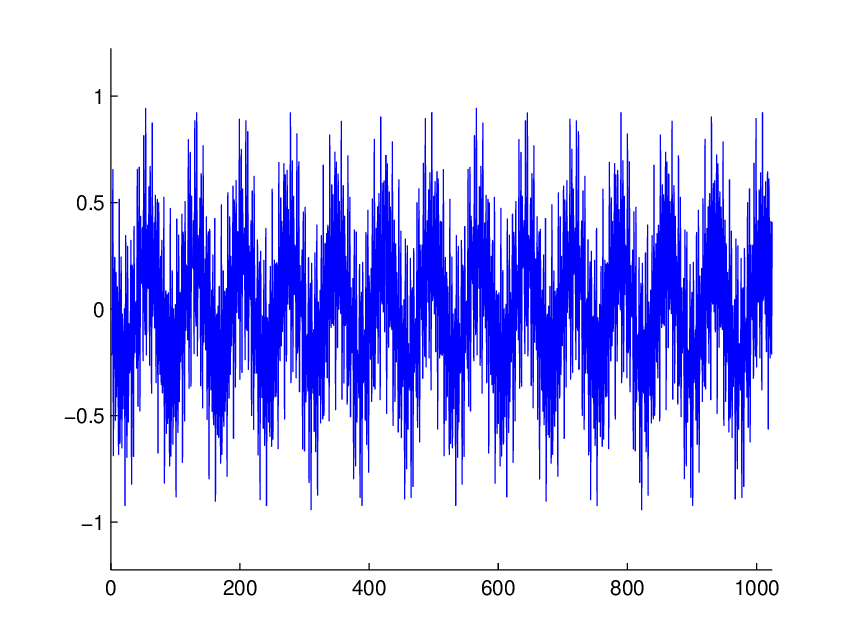}\\
(b)
\end{center}
\end{minipage}
\hskip 0.3cm
\begin{minipage}[b]{0.3\textwidth}
\begin{center}
\includegraphics[width=\textwidth]{images_modules/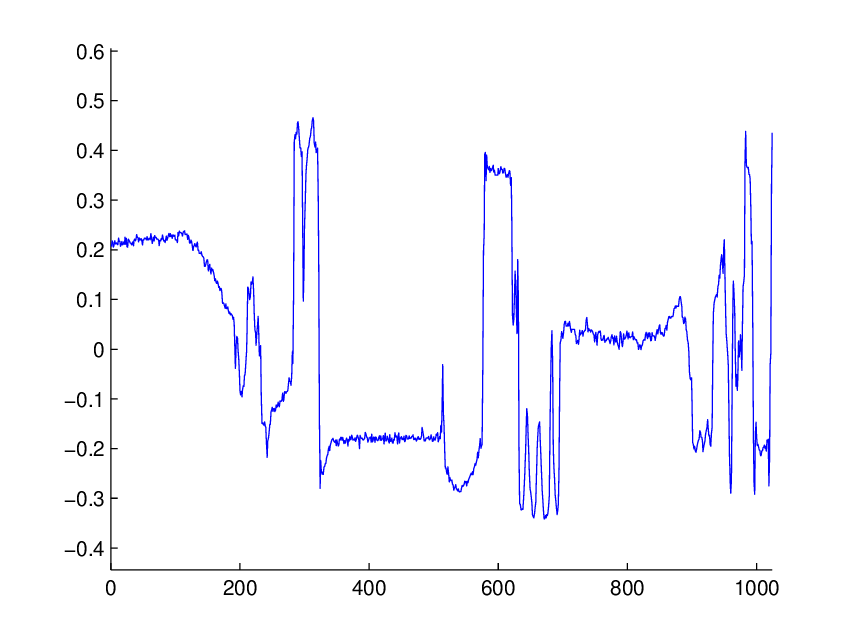}\\
(c)
\end{center}
\end{minipage}
\caption{examples of signals: (a) realization of a gaussian process (b) sum of sinusoids (c) piecewise regular
\label{fig:three_classes}}
\end{figure}

We study the performances of this algorithm on three classes of input signals with finite size $n$. The figure \ref{fig:three_classes} shows an example for each of these three classes.

\nl

The first class contains realizations of gaussian processes with renormalized frequencies. More precisely, the signals $f$ of this class satisfy:
\begin{equation*}
\hat f[n]=\frac{X_n}{\sqrt{n+2}}
\end{equation*}
where the $X_n$ are independant realizations of a gaussian random variable $X\sim \mathcal{N}(0,1)$. The normalization $\frac{1}{\sqrt{n+2}}$ ensures that all dyadic frequency bands contain approximately the same amount of energy.

The second class consists in sums of a few sinusoids. The amplitudes, phases and frequencies of the sinusoids are randomly chosen. In each dyadic frequency band, there is approximately the same mean number of sinusoids (slightly smaller than $1$).

The signals of the third class are random lines extracted from real images. They usually are structured signals, with smooth regular parts and large discontinuities at a small number of points.

\nl
To study the influence of the size of the signals on the reconstruction, we perform tests for signals of size $N=128,N=1024$ and $N=8192$. For each $N$, we used $\log_2(N)-1$ Cauchy wavelets of order $p=3$. Our low-pass filter is a gaussian function of the form $\hat\phi[k]=\exp(-\alpha k^2/2)$, with $\alpha$ independant of $N$.

\subsection{Noise}


The inputs that are provided to the algorithm are not exactly $\{|f\star\psi_j|\},f\star\phi_{J_{\max}}$ but $\{|f\star\psi_j|+n_{\psi,j}\},f\star\phi_{J_{\max}}+n_{\phi}$. The $n_{\psi,j}$ and the $n_\phi$ represent an additive noise. In all our experiments, this noise is white and gaussian.

We measure the amplitude of the noise in relative $l^2$-norm:
\begin{equation*}
\mbox{relative noise }=
\frac{\sqrt{||n_\phi||_2^2+\underset{j}{\sum}||n_{\psi,j}||_2^2}}{\sqrt{||f\star\phi_{J_{\max}}||_2^2+\underset{j}{\sum}||f\star\psi_{j}||_2^2}}
\end{equation*}

\subsection{Results}

The results are displayed on the figure \ref{fig:results}.

The x-axis displays the relative error induced by the noise over the input and the y-axis represents the reconstruction error, both over the reconstructed function and over the modulus of the wavelet transform of the reconstructed function.

For an input signal $f$ and output $f_{rec}$, we define the relative error between $f$ and $f_{rec}$ by:
\begin{equation*}
\mbox{function error }=
\frac{||f-f_{rec}||_2}{||f||_2}
\end{equation*}
and the relative error over the modulus of the wavelet transform by:
\begin{equation*}
\mbox{modulus error }=
\frac{\sqrt{||f\star\phi_{J_{\max}}-f_{rec}\star\phi_{J_{\max}}||_2^2+\underset{j}{\sum}||\,|f\star\psi_j|-|f_{rec}\star\psi_j|\,||_2^2}}{\sqrt{||f\star\phi_{J_{\max}}||_2^2+\underset{j}{\sum}||f\star\psi_{j}||_2^2}}
\end{equation*}

The modulus error describes the capacity of the algorithm to reconstruct a signal whose wavelet transform is close, in modulus, to the one which has been provided as input. The function error, on the other hand, quantifies the intrinsic stability of the phase retrieval problem. If the modulus error is small but the function error is large, it means that there are several functions whose wavelet transforms are almost equal in moduli and the reconstruction problem is ill-posed.

An ideal reconstruction algorithm would yield a small modulus error (that is, proportional to the noise over the input). Nevertheless, the function error could be large or small, depending on the well-posedness of the phase retrieval problem.

\begin{figure}
\pgfplotsset{every axis legend/.append style={
at={(1.02,1)},
anchor=north west}}

\begin{tabular}{cc}
\begin{tikzpicture}[scale=0.7]
\begin{loglogaxis}[
ymin=0.0001, ymax = 3,
title=Gaussian signals,
xlabel={Relative noise},
ylabel={Relative error},
legend entries={moduli (128),function (128),moduli (1024),function (1024),moduli (8192),function (8192)}]
\addplot[blue,mark=*] coordinates{
(1e-3, 2.8e-4)
(1e-2, 2.8e-3)
(1e-1, 2.9e-2)
};
\addplot[red,mark=triangle] coordinates{
(1e-3, 6.9e-4)
(1e-2, 6.9e-3)
(1e-1, 7.9e-2)
};
\addplot[blue,mark=*,dashed] coordinates{
  (1e-3, 2.3e-4)
  (1e-2, 2.5e-3)
  (1e-1, 2.4e-2)
};
\addplot[red,mark=triangle,dashed] coordinates{
  (1e-3, 6.8e-4)
  (1e-2, 8.9e-3)
  (1e-1, 8.8e-2)
};
\addplot[blue,mark=*,dotted,thick] coordinates{
  (1e-3, 2.3e-4)
  (1e-2, 2.5e-3)
  (1e-1, 2.1e-2)
};
\addplot[red,mark=triangle,dotted,thick] coordinates{
  (1e-3, 8.4e-4)
  (1e-2, 1.2e-2)
  (1e-1, 8.9e-2)
};
\end{loglogaxis}
\end{tikzpicture}&
\begin{tikzpicture}[scale=0.7]
\begin{loglogaxis}[
ymin=0.0001, ymax = 3,
title=Gaussian signals (Gerchberg-Saxton),
xlabel={Relative noise},
ylabel={Relative error},
legend entries={moduli (128),function (128),moduli (1024), function (1024),moduli (8192),function (8192)}]
\addplot[blue,mark=*] coordinates{
(1e-3, 2.1e-2)
(1e-2, 2.2e-2)
(1e-1, 4.3e-2)
};
\addplot[red,mark=triangle] coordinates{
(1e-3, 0.14)
(1e-2, 0.14)
(1e-1, 0.20)
};
\addplot[blue,mark=*,dashed] coordinates{
  (1e-3, 3.9e-2)
  (1e-2, 3.9e-2)
  (1e-1, 4.8e-2)
};
\addplot[red,mark=triangle,dashed] coordinates{
  (1e-3, 0.36)
  (1e-2, 0.36)
  (1e-1, 0.38)
};
\addplot[blue,mark=*,dotted,thick] coordinates{
  (1e-3, 5.5e-2)
  (1e-2, 5.5e-2)
  (1e-1, 6.0e-2)
};
\addplot[red,mark=triangle,dotted,thick] coordinates{
  (1e-3, 0.59)
  (1e-2, 0.58)
  (1e-1, 0.59)
};
\end{loglogaxis}
\end{tikzpicture}
\\
\begin{tikzpicture}[scale=0.7]
\begin{loglogaxis}[
ymin=0.0001, ymax = 3,
title=Sum of sinusoids,
xlabel={Relative noise},
ylabel={Relative error},
legend entries={moduli (128),function (128),moduli (1024),function (1024),moduli (8192),function (8192)}]
\addplot[blue,mark=*] coordinates{
(1e-3, 2.3e-2)
(1e-2, 3.0e-2)
(1e-1, 6.7e-2)
};
\addplot[red,mark=triangle] coordinates{
(1e-3, 0.25)
(1e-2, 0.31)
(1e-1, 0.49)
};
\addplot[blue,mark=*,dashed] coordinates{
  (1e-3, 2.3e-2)
  (1e-2, 3.4e-2)
  (1e-1, 6.7e-2)
};
\addplot[red,mark=triangle,dashed] coordinates{
  (1e-3, 0.32)
  (1e-2, 0.42)
  (1e-1, 0.68)
};
\addplot[blue,mark=*,dotted,thick] coordinates{
  (1e-3, 2.1e-2)
  (1e-2, 3.2e-2)
  (1e-1, 6.6e-2)
};
\addplot[red,mark=triangle,dotted,thick] coordinates{
  (1e-3, 0.35)
  (1e-2, 0.48)
  (1e-1, 0.78)
};\end{loglogaxis}
\end{tikzpicture}&
\begin{tikzpicture}[scale=0.7]
\begin{loglogaxis}[
ymin=0.0001, ymax = 3,
title=Sum of sinusoids (Gerchberg-Saxton),
xlabel={Relative noise},
ylabel={Relative error},
legend entries={moduli (128),function (128),moduli (1024), function (1024),moduli (8192),function (8192)}]
\addplot[blue,mark=*] coordinates{
(1e-3, 5.3e-2)
(1e-2, 5.8e-2)
(1e-1, 8.5e-2)
};
\addplot[red,mark=triangle] coordinates{
(1e-3, 0.47)
(1e-2, 0.50)
(1e-1, 0.60)
};
\addplot[blue,mark=*,dashed] coordinates{
  (1e-3, 8.0e-2)
  (1e-2, 8.2e-2)
  (1e-1, 9.7e-2)
};
\addplot[red,mark=triangle,dashed] coordinates{
  (1e-3, 0.83)
  (1e-2, 0.85)
  (1e-1, 0.88)
};
\addplot[blue,mark=*,dotted,thick] coordinates{
  (1e-3, 9.1e-2)
  (1e-2, 9.4e-2)
  (1e-1, 0.10)
};
\addplot[red,mark=triangle,dotted,thick] coordinates{
  (1e-3, 1.0)
  (1e-2, 1.0)
  (1e-1, 1.1)
};\end{loglogaxis}
\end{tikzpicture}
\\
\begin{tikzpicture}[scale=0.7]
\begin{loglogaxis}[
ymin=0.0001, ymax = 3,
title=Piecewise regular functions,
xlabel={Relative noise},
ylabel={Relative error},
legend entries={moduli (128),function (128),moduli (1024),function (1024),moduli (8192),function (8192)}]
\addplot[blue,mark=*] coordinates{
(1e-3, 3.0e-4)
(1e-2, 2.8e-3)
(1e-1, 3.1e-2)
};
\addplot[red,mark=triangle] coordinates{
(1e-3, 7.5e-4)
(1e-2, 6.5e-3)
(1e-1, 7.6e-2)
};
\addplot[blue,mark=*,dashed] coordinates{
(1e-3, 3.1e-4)
(1e-2, 2.7e-3)
(1e-1, 3.1e-2)
};
\addplot[red,mark=triangle,dashed] coordinates{
(1e-3, 1.6e-3)
(1e-2, 1.2e-2)
(1e-1, 8.9e-2)
};
\addplot[blue,mark=*,dotted,thick] coordinates{
  (1e-3, 3.5e-4)
  (1e-2, 2.4e-3)
  (1e-1, 2.9e-2)
};
\addplot[red,mark=triangle,dotted,thick] coordinates{
  (1e-3, 2.9e-3)
  (1e-2, 1.4e-2)
  (1e-1, 8.6e-2)
};
\end{loglogaxis}
\end{tikzpicture}&
\begin{tikzpicture}[scale=0.7]
\begin{loglogaxis}[
ymin=0.0001, ymax = 3,
title= Piecewise regular functions (Gerchberg-Saxton),
xlabel={Relative noise},
ylabel={Relative error},
legend entries={moduli (128),function (128), moduli (1024), function (1024),moduli (8192),function (8192)}]
\addplot[blue,mark=*] coordinates{
(1e-3, 7.8e-3)
(1e-2, 9.8e-3)
(1e-1, 3.6e-2)
};
\addplot[red,mark=triangle] coordinates{
(1e-3, 5.9e-2)
(1e-2, 6.3e-2)
(1e-1, 0.12)
};
\addplot[blue,mark=*,dashed] coordinates{
  (1e-3, 1.1e-2)
  (1e-2, 1.2e-2)
  (1e-1, 3.6e-2)
};
\addplot[red,mark=triangle,dashed] coordinates{
  (1e-3, 0.098)
  (1e-2, 0.10)
  (1e-1, 0.15)
};
\addplot[blue,mark=*,dotted,thick] coordinates{
  (1e-3, 2.7e-2)
  (1e-2, 2.7e-2)
  (1e-1, 4.4e-2)
};
\addplot[red,mark=triangle,dotted,thick] coordinates{
  (1e-3, 0.25)
  (1e-2, 0.25)
  (1e-1, 0.28)
};
\end{loglogaxis}
\end{tikzpicture}
\end{tabular}
\caption{Reconstruction results for the three considered classes of signals. Left column: our algorithm. Right column: alternate projections (Gerchberg-Saxton)\label{fig:results}}
\end{figure}
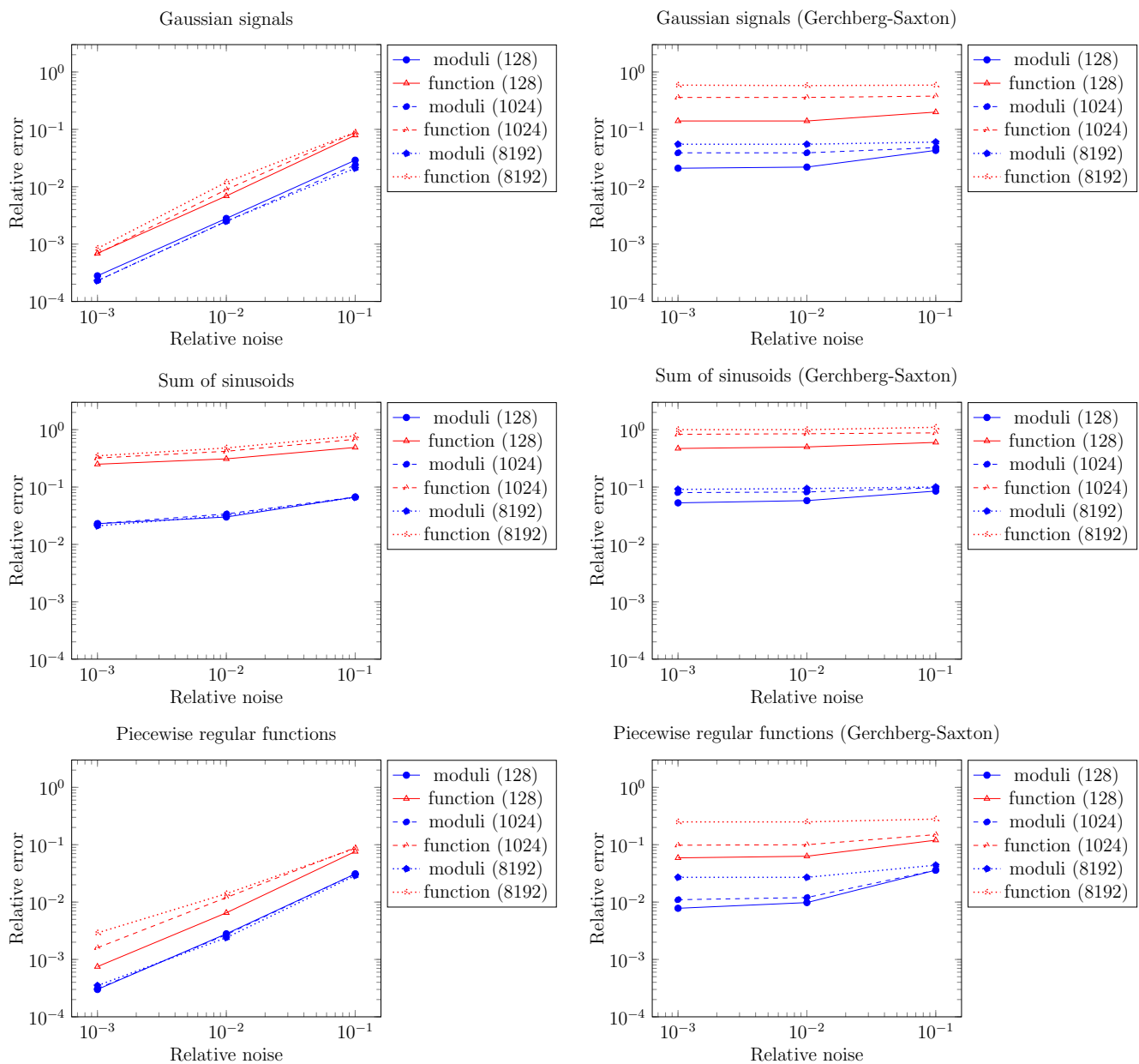

\nl

We expect that our algorithm may fail when the input modulus contain very small values (because the algorithm performs divisions, which become very unstable in presence of zeroes).


\begin{figure}
\begin{minipage}[b]{0.3\textwidth}
\begin{center}
\includegraphics[width=\textwidth]{images_modules/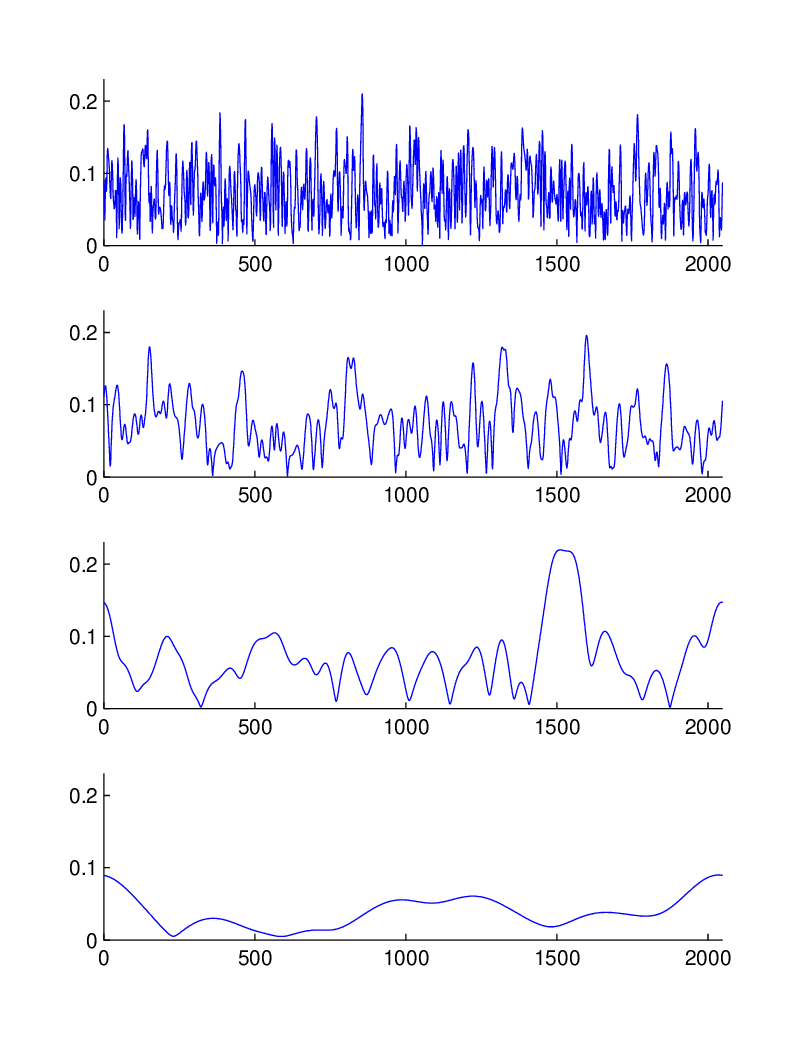}\\
(a)
\end{center}
\end{minipage}
\hskip 0.3cm
\begin{minipage}[b]{0.3\textwidth}
\begin{center}
\includegraphics[width=\textwidth]{images_modules/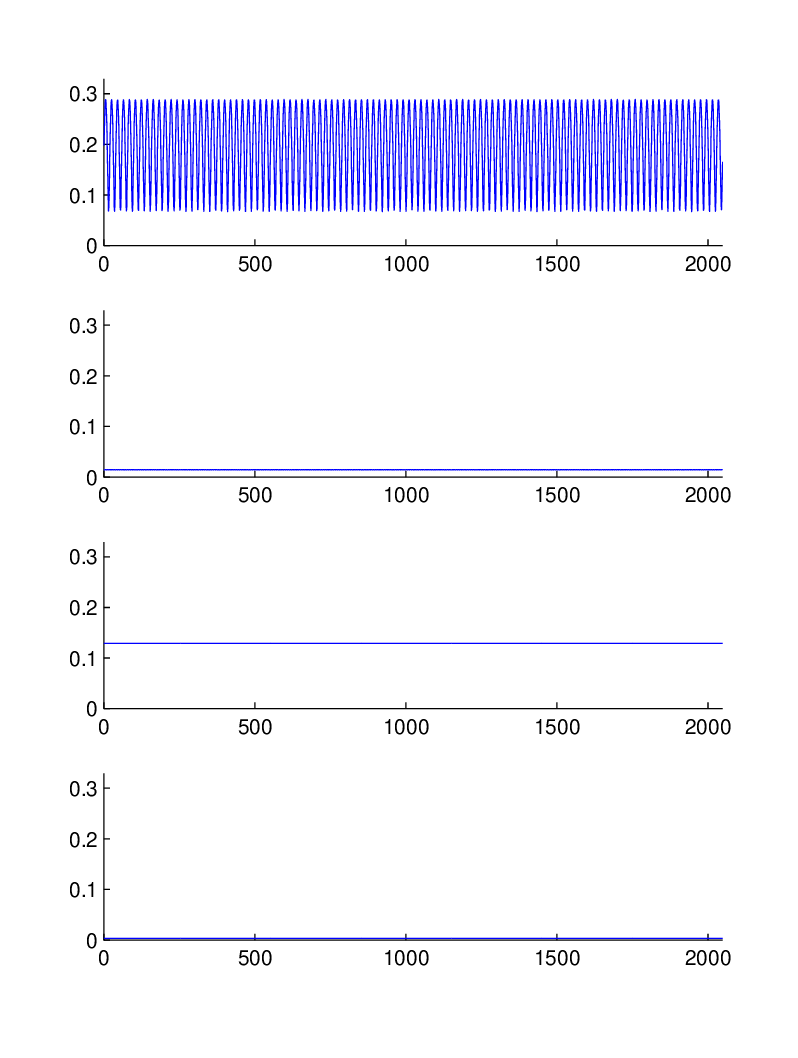}\\
(b)
\end{center}
\end{minipage}
\hskip 0.3cm
\begin{minipage}[b]{0.3\textwidth}
\begin{center}
\includegraphics[width=\textwidth]{images_modules/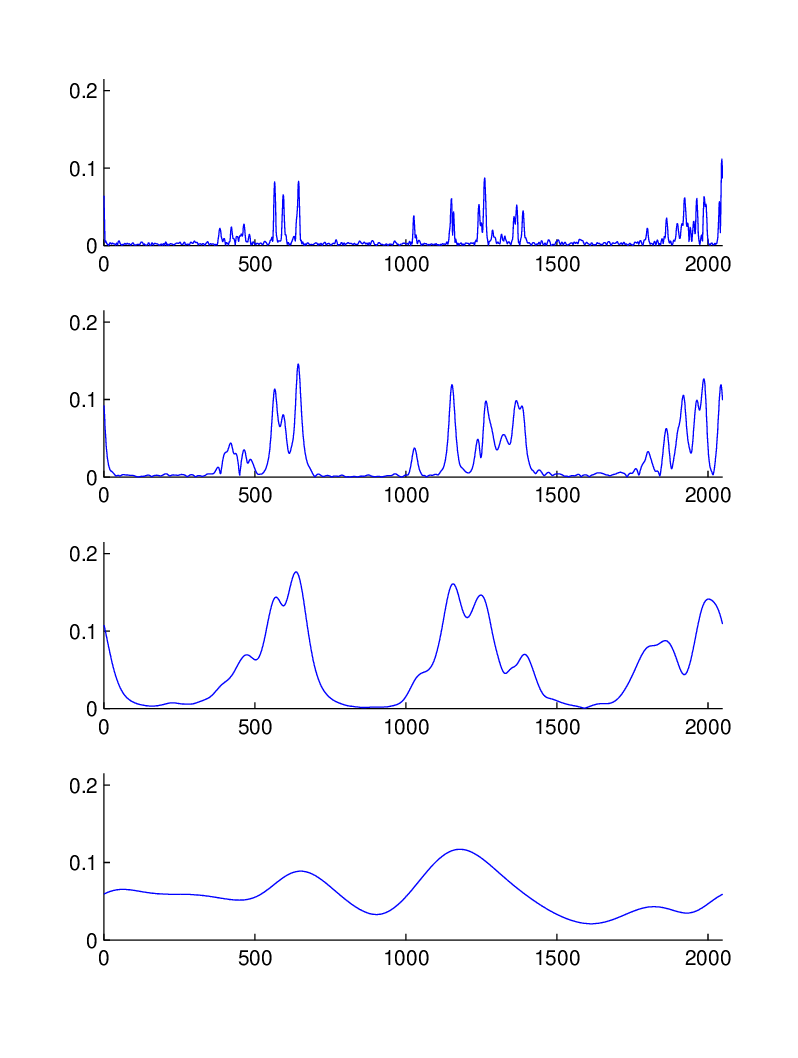}\\
(c)
\end{center}
\end{minipage}
\caption{wavelet transforms, in modulus, of the signals of the figure \ref{fig:three_classes}: (a) realization of a gaussian process (b) sum of sinusoids (c) piecewise regular\\
Each column represents the wavelet transform of one signal. Each graph corresponds to one frequency component of the wavelet transform. For sake of lisibility, only $4$ components are shown, although nine were used in the calculation.
\label{fig:wavelet_transforms}}
\end{figure}

For almost each of the signals that we consider, there exist $x$'s such that $f\star\psi_{k_j}(x)\approx 0$ but the number of such points vary greatly, depending on which class the signal belongs. As an example, the wavelet transforms of the three signals of the figure \ref{fig:three_classes} are displayed in \ref{fig:wavelet_transforms}.

For gaussian signals, there are generally not many points at which the wavelet transform vanishes. The positions of these points do not seem to be correlated in either space or frequency.

For piecewise regular signals, there are more of this points but they are usually distributed in such a way that if $f\star\psi_j(x)\approx 0$, then $f\star\psi_k(x)\approx 0$ for all wavelets $\psi_k$ of higher frequencies than $\psi_j$. This repartition makes the reconstruction easier.

When the signals are sums of sinusoids, it often happens that some components of the wavelet transform are totally negligible: for some $j$, $f\star\psi_j(x)\approx 0$ for any $x$. The negligible frequencies may be either high, low or intermediate.

\nl

From the results shown in \ref{fig:results}, it is clear that the number of zeros influences the reconstruction, but also that isolated zeroes do not prevent reconstruction. The algorithm performs well on gaussian or piecewise regular signals. The distance in modulus between the wavelet transform
 of the reconstructed signal and of the original one is proportional to the amount of noise (and generally significantly smaller). This holds up to large levels of noise (10\%). By comparison, the classical Gerchberg-Saxton algorithm is much less efficient.

However, the algorithm often fails when the input signal is a sum of sinusoids. Not surprisingly, the most difficult signals in this class are the ones for which the sinusoids are not equally distributed among frequency bands and the wavelet transform has a lot of zeroes. The relative error over the modulus of the wavelet transform is then often of several percent, even when the relative error induced by the noise is of the order of 0.1\%.

\nl
In the section \ref{s:non_uniform_continuity}, we explained why, for any function $f$, it is generally possible to construct $g$ such that $f$ and $g$ are not close but their wavelet transform have almost the same modulus. This construction holds provided that the time and frequency support of $f$ is large enough.

Increasing the time and frequency support of $f$ amounts here to increase the size $N$ of the signals. Thus, we expect the function error to increase with $N$. It is indeed the case but this effect is very weakly perceptible on gaussian signals. It is stronger on piecewise regular functions, probably because the wavelet transforms of these signals have more zeroes; their reconstruction is thus less stable.



In the case of the sums of sinusoids, because of the failure of the algorithm, we can not draw firm conclusions regarding the stability of the reconstruction. We nevertheless suspect that this class of signals is the less stable of all and that these instabilities are the cause of the incorrect behavior of our algorithm.

\section{Conclusion}

In this text, we have studied the phase retrieval problem in which one tries to reconstruct a function from the modulus of its Cauchy wavelet transform. We have shown that the reconstruction was unique, up to a global phase, and that the reconstruction operator was continuous but not uniformly continuous. Indeed, if we modulate the wavelet transform of a function by slow-varying phases, we can construct very different functions with almost the same wavelet transform, in modulus. Moreover, in the case where the wavelet transform does not take values too close to zero, all the instabilities of the reconstruction are of this form.

Our proofs are specific to Cauchy wavelets and cannot be extended to generic wavelets because they strongly use the link between Cauchy wavelets and holomorphic functions. Only the description of instabilities of the reconstruction operator (section \ref{s:non_uniform_continuity}) is independent of the choice of the wavelet family (actually, it could also be extended to other time-frequency representations that the wavelet transform). However, in practice, the Cauchy wavelets do not seem to behave differently from other wavelets. We expect that the unicity and stability results are true for much generic wavelets than Cauchy ones but we do not know how to prove it.

\nl
\noindent
\textbf{Acknowledgedments.} Many thanks to A. Bandeira and D. Mixon for their helpful correspondence. This work has been partially supported by ERC grant InvariantClass 320959.

\appendix

\section{Lemmas of the proof of theorem \ref{thm:unicity_hol} \label{app:lemmas_unicity_hol}}

\begin{proof}[Proof of lemma \ref{lem:prolongation}]
We recall equation \eqref{eq:relation_zeros}:
\begin{equation*}
\tag{\ref{eq:relation_zeros}}
\frac{B_F(z+i\alpha)\tilde B_G(z)}{B_G(z+i\alpha)\tilde B_F(z)}
=e^{iC+iBz}\frac{S_G(z+i\alpha)}{S_F(z+i\alpha)}
\end{equation*}
We want to show that the left part of this equality admits a meromorphic extension to $\C$. We also want this meromorphic extension to have the same poles (with multiplicity) than it would if all four fonctions $B_F,B_G,\tilde B_F$ and $\tilde B_G$ were meromorphically defined over all $\C$.

We first remark that $\tilde B_F$ and $\tilde B_G$ admit meromorphic extensions to $\C$. Indeed, if the $(z_k)_k$ are the zeros of $F(.+i\alpha)$ in $\H$, this set has no accumulation point in $\overline{H}$: if $z_\infty$ was an accumulation point, $z_\infty+i\alpha\in\H$ would be an accumulation point of the zeros of $F$ and, as $F$ is holomorphic, it would be the null function. From the classical properties of Blaschke products, $\tilde B_F$ converge over $\C$ and so does $\tilde B_G$.

On the contrary, $B_F$ and $B_G$ may not admit meromorphic extensions over $\C$. But their quotient $B_F/B_G$ does.

We define:
\begin{equation*}
B_F'(z)=\left(\frac{z-i}{z+i}\right)^{m_F}\underset{k}{\prod}\frac{|z^F_k-i|}{z^F_k-i}\frac{|z^F_k+i|}{z^F_k+i}\frac{z-z^F_k}{z-\overline{z}^F_k}
\end{equation*}
where the $(z^F_k)$'s are the zeros of $F$, each $z^F_k$ being counted, not with multiplicity $\mu_F(z^F_k)$, but with multiplicity $\max(0,\mu_F(z^F_k)-\mu_G(z^F_k))$ (and $m_F$ is still the multiplicity of $i$ as a zero of $F$).

Similarly:
\begin{equation*}
B_G'(z)=\left(\frac{z-i}{z+i}\right)^{m_G}\underset{k}{\prod}\frac{|z^G_k-i|}{z^G_k-i}\frac{|z^G_k+i|}{z^G_k+i}\frac{z-z^G_k}{z-\overline{z}^G_k}
\end{equation*}
where the $(z_k^G)$'s are the zeros of $G$ counted with multiplicity $\max(0,\mu_G(z_k^G)-\mu_F(z_k^G))$.

We define:
\begin{equation*}
B_{F,G}(z)=\underset{k}{\prod}\frac{|z^{F,G}_k-i|}{z^{F,G}_k-i}\frac{|z^{F,G}_k+i|}{z^{F,G}_k+i}\frac{z-z^{F,G}_k}{z-\overline{z}^{F,G}_k}
\end{equation*}
where the $z_k^{F,G}$ are the zeros of $F$ or $G$, counted with multiplicity $\min(\mu_F(z_k^{F,G}),\mu_G(z_k^{F,G}))$. The function $B_{F,G}$ corresponds to the ``common part'' of $B_F$ and $B_G$, which we may factorize in the quotient $B_F/B_G$.

The products $B_F',B_G',B_{F,G}$ converge over $\H$ and, for all $z\in\H$:
\begin{equation*}
B_F(z)=B'_F(z)B_{F,G}(z)\quad\quad
B_G(z)=B'_G(z)B_{F,G}(z)
\end{equation*}
So for all $z\in\H$:
\begin{equation*}
\frac{B_F(z+i\alpha)\tilde B_G(z)}{B_G(z+i\alpha)\tilde B_F(z)}=
\frac{B'_F(z+i\alpha)\tilde B_G(z)}{B'_G(z+i\alpha)\tilde B_F(z)}
\end{equation*}
If we show that $B'_F$ and $B'_G$ converge over $\C$, we can take $B_w(z)=\frac{B'_F(z+i\alpha)\tilde B_G(z)}{B'_G(z+i\alpha)\tilde B_F(z)}$. It will be meromorphic over $\C$.

\nl

To prove this, we first establish a relation between the zeros of $F$ and $G$.

Let $z$ be such that $0<\Im z\leq \alpha$. The zeros of $B_F$ are the zeros of $F$ in $\H$, counted with multiplicity. Thus, $z-i\alpha$ is a zero of $B_F(.+i\alpha)$ with multiplicity $\mu_F(z)$. It is a zero of $B_G(.+i\alpha)$ with multiplicity $\mu_G(z)$.

Because $\Im (z-i\alpha)\leq 0$, it is not a zero of $\tilde B_F$ (resp. $\tilde B_G$) but may be a pole. As a pole, its multiplicity is the multiplicity of $\overline{z-i\alpha}=\overline{z}+i\alpha$ as a zero of $F(.+i\alpha)$ (resp. $G(.+i\alpha)$): it is $\mu_F(\overline{z}+2i\alpha)$ (resp. $\mu_G(\overline{z}+2i\alpha)$).

The right part of \eqref{eq:relation_zeros}, $e^{iC+iBz} \frac{S_G(z+i\alpha)}{S_F(z+i\alpha)}$ has no zero neither pole over $\{z\in\C\mbox{ s.t. }\Im z>-\alpha\}$ (from the definition of $S_G$ and $S_F$ given in \eqref{eq:def_S}). So neither does the left part. In particular, $z-i\alpha$ is not a zero and is not a pole:
\begin{equation}\label{eq_app:part_relation_zeros}
\mu_F(z)-\mu_G(z)-\mu_G(\overline{z}+2i\alpha)+\mu_F(\overline{z}+2i\alpha)=0
\end{equation}

We now explain why $B'_F$ converges over $\C$. The same result will hold for $B'_G$. From the properties of Blaschke products, $B'_F$ converges over $\C$ if $(z_k^F)$ has no accumulation point in $\R$.

By contradiction, we assume that some subsequence of $(z_k^F)$, denoted by $(z_{\phi(k)}^F)$, converges to $\lambda\in\R$. Because the $z_k^F$'s appear in $B'_F$ with multiplicity $\max(0,\mu_F(z_k^F)-\mu_G(z_k^F))$, we must have:
\begin{equation*}
\mu_F(z_{\phi(k)}^F)-\mu_G(z_{\phi(k)}^F)>0\quad\quad
\forall k\in\N
\end{equation*}
We can assume that, for all $k$, $0<\Im z_{\phi(k)}^F\leq\alpha$. From \eqref{eq_app:part_relation_zeros}:
\begin{equation*}
\mu_G(\overline{z}_{\phi(k)}^F+2i\alpha)-\mu_F(\overline{z}_{\phi(k)}^F+2i\alpha)
=\mu_F(z_{\phi(k)}^F)-\mu_G(z_{\phi(k)}^F)>0
\end{equation*}
Consequently, $\overline{z}_{\phi(k)}^F+2i\alpha$ is a zero of $G$ for all $k$. As $z_{\phi(k)}^F\to\lambda\in\R$, $\lambda+2i\alpha\in\H$ is an accumulation point of the zeros of $G$. This is impossible because $G$ is holomorphic over $\H$ and we have assumed that it was not the null function.

\nl
To conclude, we have to prove the equation \eqref{eq:formule_mult}.

For any $z\in\H$, the multiplicity of $\overline{z}-i\alpha$ as a pole of $B_F'(.+i\alpha)$ is the multiplicity of $z$ as a zero of $B_F'$, that is $\max(0,\mu_F(z)-\mu_G(z))$. Its multiplicity as a pole of $B_G'(.+i\alpha)$ is $\max(0,\mu_G(z)-\mu_F(z))$. As a pole of $\tilde B_F$ (resp. $\tilde B_G$), it is $\mu_F(z+2i\alpha)$ (resp. $\mu_G(z+2i\alpha)$).

The multiplicity of $\overline{z}-i\alpha$ as a pole of $B_w$ is then, as required:
\begin{gather*}
\max(0,\mu_F(z)-\mu_G(z))-\max(0,\mu_G(z)-\mu_F(z))-\mu_F(z+2i\alpha)+\mu_G(z+2i\alpha)\\
=(\mu_F(z)-\mu_G(z))-(\mu_F(z+2i\alpha)-\mu_G(z+2i\alpha))
\end{gather*}

\end{proof}

\begin{proof}[Proof of lemma \ref{lem:S_1}]
We call $dE_F$ and $dE_G$ the singular measures appearing in the definitions of $S_F$ and $S_G$ (see \eqref{eq:def_S}).

From equation \eqref{eq:relation_zeros} and lemma \ref{lem:prolongation}, for any $z\in\H$:
\begin{equation*}
\exp\left(\frac{i}{\pi}\int_\R\frac{1+tz}{t-z}(dE_G-dE_F)(t)\right)
=\frac{S_G(z)}{S_F(z)}=B_w(z-i\alpha)e^{-iC-iB(z-i\alpha)}
\end{equation*}
The function $z\to B_w(z-i\alpha)e^{-iC-iB(z-i\alpha)}$ is meromorphic over $\C$. From the following lemma, $dE_G-dE_F$ must then be the null measure, so $S_G=S_F$ over $\H$.

\begin{lem}\label{lem_app:dE_zero}
Let $dE$ be a real bounded measure, singular with respect to Lebesgue measure. We define:
\begin{equation*}
S(z)=\exp\left(\frac{i}{\pi}\int_\R\frac{1+tz}{t-z}dE(t)\right)
\quad\quad\forall z\in\H
\end{equation*}
If $S$ admits a meromorphic extension in the neighborhood of each point of $\R$, then $dE=0$.
\end{lem}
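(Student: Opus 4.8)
The plan is to read $\log|S|$ off as the Poisson integral of the singular measure, and then exploit the fact that a meromorphic continuation across the \emph{whole} real line forces this harmonic function to extend harmonically past the boundary, which is incompatible with a nonzero singular boundary measure.

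First I would compute the modulus of $S$. For $z=x+iy\in\H$, using that $\Re\bigl(\tfrac{i}{\pi}\tfrac{1+tz}{t-z}\bigr)=-\tfrac{y(1+t^2)}{\pi|t-z|^2}$, the definition \eqref{eq:def_S} gives
\[
\log|S(x+iy)|=-\frac1\pi\int_\R\frac{y(1+t^2)}{(t-x)^2+y^2}\,dE(t)=-P[d\mu](x+iy),
\]
so that $\log|S|$ is minus the half-plane Poisson integral of the signed measure $d\mu(t)=(1+t^2)\,dE(t)$, which is locally finite (the defining integral converges absolutely) and singular with respect to Lebesgue measure. Since the Poisson integral of a singular measure has nontangential limit $0$ at Lebesgue-almost every boundary point (Fatou's theorem, applied to the positive and negative parts of $\mu$), we get $|S|\to 1$ nontangentially for a.e.\ $x\in\R$.

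Next I would globalize $S$ by reflection. As $S=\exp(\text{holomorphic})$, it is holomorphic and zero-free on $\H$, and by hypothesis it continues meromorphically across each real point; away from its isolated real poles the continuation is continuous, and matching it with the a.e.\ radial limits above yields $|S|=1$ on $\R$. The Schwarz reflection $z\mapsto 1/\overline{S(\overline z)}$ therefore glues with $S$ along $\R$ into a single function $\mathcal S$ meromorphic on all of $\C$ with $\mathcal S(z)\,\overline{\mathcal S(\overline z)}=1$. This symmetry exchanges zeros in $\H$ with poles in the lower half-plane and conversely; since $\mathcal S$ has neither zeros nor poles off $\R$ (it coincides with $S$ on $\overline\H$), its only possible singularities lie on $\R$. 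But a pole of $\mathcal S$ at a real point $p$ would, by the symmetry relation, simultaneously be a zero there, which is absurd. Hence $\mathcal S$ is entire and zero-free, and consequently $\log|S|=\log|\mathcal S|$ is harmonic on all of $\C$.

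Finally, the conclusion: $-\log|S|$ is a Poisson integral over $\H$ whose representing measure has singular part $d\mu$, and by the previous step it extends harmonically across every point of $\R$. A function harmonic in a two-sided neighborhood of an interval $I\subset\R$ carries no singular mass on $I$ in its Poisson representation, so $\mu|_I=0$; covering $\R$ by such intervals gives $\mu=0$, and since $1+t^2>0$ this means $dE=0$. The main obstacle is precisely these last two steps: one must handle a signed, possibly continuous-singular measure (not merely atoms, which by themselves would already produce an easy essential singularity and thus contradict meromorphy at each atom), and turn \emph{meromorphic continuation with unimodular boundary values} into \emph{vanishing of the singular boundary measure}. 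Ruling out real poles via the reflection symmetry, and invoking the uniqueness of the singular part in the Poisson--Herglotz representation, are the delicate points; once they are established the vanishing of $dE$ is immediate.
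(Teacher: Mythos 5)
Your proof is correct, but it takes a genuinely different route from the paper's. The paper never invokes Fatou's theorem or reflection: it proves by hand (lemma \ref{lem_app:lim_L1}) that $s(\cdot+iy)\,dt=-\log|S(\cdot+iy)|\,dt$ converges weakly to $(1+t^2)\,dE$, and separately shows, using the local factorization $S(z)=(z-r_k)^m h(z)$ at the real zeros and poles of the meromorphic extension, that $s(\cdot+iy)\to-\log|S_{|\R}|$ in $L^1_{\mathrm{loc}}$; the point is that $\log|t-r_k|$ is locally integrable, so real zeros and poles never need to be excluded. Equating the two limits gives $(1+t^2)\,dE=-\log|S(t)|\,dt$, and singularity kills both sides. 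You instead use Fatou's theorem for singular measures to get $|S|\to 1$ nontangentially a.e., rule out real zeros and poles altogether, reflect $S$ to an entire zero-free function so that $\log|S|$ is harmonic across $\R$, and then invoke uniqueness of the Poisson representing measure. Your route buys a stronger structural conclusion ($S$ extends to an entire zero-free function, unimodular on $\R$) and a shorter argument modulo cited theorems; the paper's buys self-containedness. Note in particular that the ``uniqueness of the singular part'' you invoke at the end is essentially the same weak-$*$ convergence statement that the paper proves as lemma \ref{lem_app:lim_L1}, so your proof does not avoid that ingredient --- it outsources it.

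One step in your write-up is compressed to the point of being slightly circular: you assert that the glued function $\mathcal{S}$ is meromorphic on all of $\C$ \emph{before} ruling out real poles, whereas if the hypothesized extension $\tilde{S}$ had a pole at $p\in\R$, the naive boundary-value gluing ($S$ above, $1/\overline{S(\overline{z})}$ below) would produce an essential singularity at $p$ (modulus tending to $\infty$ from above and to $0$ from below), not a pole, so the symmetry relation cannot yet be applied there. The clean fix is to exclude real poles and zeros \emph{first}, which is immediate from what you already have: $|\tilde{S}|=1$ on $\R$ minus the poles, while the modulus of a meromorphic function tends to $+\infty$ (resp.\ $0$) along \emph{every} path into a pole (resp.\ zero), including along $\R$. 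With that, $\tilde{S}$ is holomorphic, zero-free and unimodular near $\R$, the reflection is unproblematic, and the rest of your argument goes through.
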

\begin{proof}
Let $s(z)=-\log|S(z)|$ for all $z\in\H$. This is well-defined and:
\begin{equation*}
s(x+iy)=\frac{1}{\pi}\int_\R\frac{y}{(t-x)^2+y^2}(1+t^2)dE(t)
\quad\quad\forall x,y\in\R\mbox{ s.t. }y>0
\end{equation*}

The lemma \ref{lem_app:lim_L1} states that $(1+t^2)dE(t)$ is the limit, in the sense of distributions, of $s(t+iy)dt$ when $y\to 0^+$. The principle of the proof will then be to show that $s(.+iy)$ also converges to $-\log|S_{|\R}|$, where $S_{|\R}$ is the extension of $S$ to $\R$, so $dE=-\frac{\log|S(t)|dt}{1+t^2}$. The singularity of $dE$ will imply $\log|S_{|\R}|=0$ and $dE=0$.

\begin{lem}\label{lem_app:lim_L1}
Let $dE$ be a real measure such that $\frac{dE(t)}{1+t^2}$ is bounded. Let:
\begin{equation*}
s(x+iy)=\frac{1}{\pi}\int_\R\frac{y}{(t-x)^2+y^2}dE(t)
\quad\quad\forall x,y\in\R\mbox{ s.t. }y>0
\end{equation*}
For all continuous compactly-supported $f\in\mathcal{C}^0_c(\R)$:
\begin{equation*}
\int_\R f(t)dE(t)=\underset{y\to 0^+}{\lim}\int_\R s(t+iy)f(t)dt
\end{equation*}
\end{lem}
\begin{proof}
\begin{align}
\int_\R s(t+iy)f(t)dt&=\frac{1}{\pi}\iint_{\R}\frac{yf(t)}{(t'-t)^2+y^2}dE(t')dt
\nonumber\\
&=\frac{1}{\pi}\iint_\R \frac{yf(t')}{(t'-t)^2+y^2}dE(t')dt+\frac{1}{\pi}\iint_\R\frac{y(f(t')-f(t))}{(t'-t)^2+y^2}dE(t')dt
\nonumber\\
&=\int_\R f(t)dE(t)+\frac{1}{\pi}\iint_\R\frac{y(f(t')-f(t))}{(t'-t)^2+y^2}dE(t')dt
\label{eq_app:convergence_L1}
\end{align}
For all $y,\epsilon>0,t'\in\R$:
\begin{align*}
\left|\int_\R\frac{y(f(t')-f(t))}{(t'-t)^2+y^2}dt\right|
&\leq \left(\underset{|x_1-x_2|\leq\epsilon}{\sup}|f(x_1)-f(x_2)|\right)
\int_\R\frac{y}{(t'-t)^2+y^2}dt\\
&\quad +2(\sup|f|)\int_{|t-t'|>\epsilon}\frac{y}{(t'-t)^2+y^2}dt\\
&= \pi\left(\underset{|x_1-x_2|\leq\epsilon}{\sup}|f(x_1)-f(x_2)|\right)\\
&\quad +2(\sup|f|)\int_{|u|>\epsilon/y}\frac{1}{1+u^2}du
\end{align*}
The second term of the last sum tends to $0$ when $y\to 0^+$, uniformly in $t'$ so:
\begin{equation*}
\underset{y\to 0^+}{\limsup}\left|\int_\R\frac{y(f(t')-f(t))}{(t'-t)^2+y^2}dt\right|
\leq\pi\left(\underset{|x_1-x_2|\leq\epsilon}{\sup}|f(x_1)-f(x_2)|\right)
\end{equation*}
which tends to $0$ when $\epsilon\to 0$ because $f$ is uniformly continuous. Convergence is uniform in $t'$.

Moreover, if $K$ is the compact support of $f$ and $|K|$ is its Lebesgue measure, then, for all $t'\notin K$:
\begin{align*}
\left|\int_\R\frac{y(f(t')-f(t))}{(t'-t)^2+y^2}dt\right|
&=\left|\int_\R\frac{yf(t)}{(t'-t)^2+y^2}dt\right|\\
&\leq(\sup|f|)\int_K\frac{y}{(t'-t)^2+y^2}dt\\
&\leq (\sup|f|)\int_K\frac{y}{d(t',K)^2+y^2}dt\\
&=(\sup |f|)\frac{y}{d(t',K)^2+y^2}|K|
\end{align*}

It implies that the second term of \eqref{eq_app:convergence_L1} tends to $0$. Let $K'=\{t'\in\R\mbox{ s.t. }d(t',K)\leq 1\}$.
\begin{align*}
\iint_\R\frac{y(f(t')-f(t))}{(t'-t)^2+y^2}dE(t')dt
&\leq\underset{t'\in K'}{\sup} \left|\int_\R\frac{y(f(t')-f(t))}{(t'-t)^2+y^2}dt\right|dE(K')\\
&\quad+|K|(\sup|f|)\int_{t'\notin K'}\frac{y}{d(t',K)^2+y^2}dE(t')
\end{align*}
Because $\frac{dE(t')}{1+t'^2}$ is bounded, $\int_{t'\notin K'}\frac{dE(t')}{d(t',K)^2+y^2}$ is bounded when $y\to 0^+$. So the last expression tends to $0$.

The equation \eqref{eq_app:convergence_L1} then implies the result.
\end{proof}

We still denote by $S(t)$ the meromorphic extension of $S$ to a neighborhood of $\overline{H}$. Let $\{r_k\}$ be the zeros or poles of $S$.

When $y\to 0^+$, $s(.+iy)$ tends to $-\log|S|$ almost everywhere. On every compact of $\R-\{r_k\}$, the convergence is uniform, and thus in $L^1$.

Let $r_k$ be any zero or pole and $\epsilon>0$ be such that $S$ admits a meromorphic extension over a neighborhood of $[r_k-\epsilon;r_k+\epsilon]\times[-\epsilon;\epsilon]$ and $r_j\notin [r_k-\epsilon;r_k+\epsilon]$ for all $j\ne k$. There exist $h:[r_k-\epsilon;r_k+\epsilon]\times[-\epsilon;\epsilon]\to\C$ holomorphic and $m\in\Z$ such that:
\begin{equation*}
S(z)=(z-r_k)^mh(z)\quad\forall z\in [r_k-\epsilon;r_k+\epsilon]\times[-\epsilon;\epsilon]
\quad\quad\mbox{and}\quad
h(r_k)\ne 0
\end{equation*}
For all $y\in]0;\epsilon[$:
\begin{align}
\int_{r_k-\epsilon}^{r_k+\epsilon}|s(t+iy)+\log|S(t)||dt
&=\int_{r_k-\epsilon}^{r_k+\epsilon}\big|m\log|t-r_k+iy|+\log|h(t+iy)| \nonumber\\
&\quad\quad-m\log|t-r_k|-\log|h(t)|\big|dt\nonumber\\
&\leq m\int_{r_k-\epsilon}^{r_k+\epsilon}\big|\log|t-r_k+iy|-\log|t-r_k|\big|dt\nonumber\\
&\quad+\int_{r_k-\epsilon}^{r_k+\epsilon}\big|\log|h(t+iy)|-\log|h(t)|\big|dt
\label{eq_app:convergence_S_L1}
\end{align}
As $\log|h|$ is continuous, $\log|h(.+iy)|$ converges uniformly to $\log|h_{|\R}|$ over $[r_k-\epsilon;r_k+\epsilon]$:
\begin{equation*}
\int_{r_k-\epsilon}^{r_k+\epsilon}\big|\log|h(t+iy)|-\log|h(t)|\big|dt\to 0\quad
\mbox{when }y\to 0^+
\end{equation*}
As $\log|.-r_k+iy|$ converges to $\log|.-r_k|$ in $L^1([r_k-\epsilon;r_k+\epsilon])$:
\begin{equation*}
\int_{r_k-\epsilon}^{r_k+\epsilon}\big|\log|t-r_k+iy|-\log|t-r_k|\big|dt\to 0
\end{equation*}
So, by \eqref{eq_app:convergence_S_L1}, $s(.+iy)$ converges in $L^1$ to $t\in\R\to-\log|S(t)|$, over $[r_k-\epsilon;r_k+\epsilon]$. As the sequence $(r_k)$ has no accumulation point in $\R$, $s(.+iy)\to-\log|S_\R|$ (in $L^1$) over each compact set of $\R$.

By the lemma \ref{lem_app:lim_L1}, for all $f\in\mathcal{C}^0_c(\R)$:
\begin{equation*}
\int_\R f(t)(1+t^2)dE(t)=\underset{y\to 0^+}{\lim}\int_\R s(t+iy)f(t)dt=-\int_\R\log|S(t)|f(t)dt
\end{equation*}
We deduce that $dE(t)=-\frac{\log|S(t)|dt}{1+t^2}$. As $dE$ is singular with respect to Lebesgue measure, we must have $\log|S(t)|=0$ for all $t\in\R$ and $dE=0$.
\end{proof}

\end{proof}

\section{Lemmas of the proof of theorem \ref{thm:weak_stability} \label{app:lemmas_weak_stability}}

\begin{proof}[Proof of lemma \ref{lem:tot_bound}]
We first recall the Riesz-Fr\'echet-Kolmogorov theorem.
\begin{thm*}[Riesz-Fr\'echet-Kolomogorov]
Let $p\in[1;+\infty[$. Let $\mathcal{F}$ be a subset of $L^p(\R)$. The set $\mathcal{F}$ is relatively compact if and only if:
\begin{enumerate}
\item[(i)] $\mathcal{F}$ is bounded.
\item[(ii)] For every $\epsilon>0$, there exists some compact $K\subset \R$ such that:
\begin{equation*}
\underset{f\in\mathcal{F}}{\sup}\,||f||_{L^p(\R-K)}\leq\epsilon
\end{equation*}
\item[(iii)] For every $\epsilon>0$, there exists $\delta>0$ such that:
\begin{equation*}
\underset{f\in\mathcal{F}}{\sup}\,||f(.+h)-f||_p\leq\epsilon
\quad\quad\forall h\in[-\delta;\delta]
\end{equation*}
\end{enumerate}
\end{thm*}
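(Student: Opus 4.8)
The plan is to work in the complete metric space $L^p(\R)$ and use the standard equivalence there between relative compactness and total boundedness; I then treat the two implications separately. Throughout I write $\tau_h f(x)=f(x+h)$, and I use two elementary facts: translation preserves the $L^p$ norm, and translation is strongly continuous on $L^p(\R)$ for $p<\infty$, i.e. $||\tau_h g-g||_p\to 0$ as $h\to 0$ for each fixed $g$. The latter is proved by approximating $g$ in $L^p$ by a compactly supported continuous function, whose uniform continuity gives the estimate.

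For necessity, suppose $\mathcal{F}$ is relatively compact, hence totally bounded. Property (i) is immediate, since a totally bounded set is bounded. For (ii) and (iii) the idea is to transfer to the whole family the two properties that each \emph{individual} $L^p$ function enjoys, by means of a finite net. Fix $\epsilon>0$ and choose a finite $(\epsilon/2)$-net $g_1,\dots,g_N\in L^p(\R)$ for $\mathcal{F}$. For (iii): by strong continuity of translation, pick $\delta>0$ with $||\tau_h g_i-g_i||_p\le\epsilon/2$ for all $i$ and all $|h|\le\delta$; then for an arbitrary $f\in\mathcal{F}$, choosing $g_i$ with $||f-g_i||_p\le\epsilon/2$ and using $||\tau_h f-\tau_h g_i||_p=||f-g_i||_p$, the triangle inequality yields $||\tau_h f-f||_p\le\epsilon$. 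For (ii): each $g_i$ satisfies $||g_i||_{L^p(|x|>R)}\to 0$ as $R\to\infty$ by dominated convergence, so a single $K=[-R;R]$ serves all $g_i$ at once, and the same net argument transfers the bound to every $f\in\mathcal{F}$.

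For the substantial direction, I assume (i)--(iii) and prove total boundedness using the local averaging operator
\[
(M_r f)(x)=\frac{1}{2r}\int_{-r}^{r}f(x+y)\,dy.
\]
Three estimates are needed. First, since $\frac{1}{2r}\int_{-r}^r f\,dy=f$, Minkowski's integral inequality gives $||M_r f-f||_p\le\sup_{|y|\le r}||\tau_y f-f||_p$, so by (iii) I may fix $r$ small enough that $||M_r f-f||_p\le\epsilon/3$ for every $f\in\mathcal{F}$. Second, Hölder's inequality yields the pointwise bounds $|M_r f(x)|\le(2r)^{-1/p}||f||_p$ and $|M_r f(x+h)-M_r f(x)|\le(2r)^{-1/p}||\tau_h f-f||_p$, so by (i) and (iii) the family $\{M_r f\}_{f\in\mathcal{F}}$ is uniformly bounded and uniformly equicontinuous. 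Third, $M_r$ is an $L^p$-contraction and $M_r(f 1_{[-R;R]})$ is supported in $K'=[-R-r;R+r]$, so property (ii) gives $||M_r f||_{L^p(\R\setminus K')}\le\epsilon/3$ uniformly.

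To conclude, I restrict the uniformly bounded, uniformly equicontinuous family $\{M_r f|_{K'}\}$ to the compact interval $K'$ and invoke Arzel\`a--Ascoli: this family is relatively compact in $C(K')$, hence totally bounded in $L^p(K')$ because uniform convergence on $K'$ forces $L^p(K')$ convergence. Thus there are finitely many $h_1,\dots,h_M\in L^p(\R)$, supported in $K'$, so that every $(M_r f)1_{K'}$ lies within $\epsilon/3$ of some $h_i$. For $f\in\mathcal{F}$ with $(M_r f)1_{K'}$ near $h_i$, the splitting
\[
||f-h_i||_p\le ||f-M_r f||_p+||M_r f||_{L^p(\R\setminus K')}+||(M_r f)1_{K'}-h_i||_p
\]
bounds each summand by $\epsilon/3$, so $\{h_i\}$ is a finite $\epsilon$-net; hence $\mathcal{F}$ is totally bounded and, by completeness of $L^p(\R)$, relatively compact. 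The main obstacle is precisely this sufficiency argument: the smoothing step must simultaneously gain compactness (via Arzel\`a--Ascoli) and stay $L^p$-close to the original functions, and the three error terms — approximation by the average, tightness of the average, and the finite net in $C(K')$ — must each be controlled \emph{uniformly} over the whole family so that they combine into a single $\epsilon$-net.
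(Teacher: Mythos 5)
Your proof is correct, but note that the paper does not actually prove this statement: it is recalled verbatim as a classical theorem at the start of the proof of Lemma \ref{lem:tot_bound} and then simply applied, so there is no proof in the paper to compare against. What you have written is the standard textbook argument (in the style of Brezis or Rudin): necessity by transferring the individual properties of $L^p$ functions through a finite $\epsilon/2$-net, and sufficiency by smoothing with the local average $M_r$, where (iii) controls $\|M_rf-f\|_p$ via Minkowski's integral inequality, (i) and (iii) give uniform boundedness and equicontinuity of $\{M_rf\}$ through the H\"older bound $(2r)^{-1/p}$, (ii) gives uniform tightness of $\{M_rf\}$ after enlarging the compact to $K'=[-R-r;R+r]$, and Arzel\`a--Ascoli on $K'$ produces the finite net, with total boundedness plus completeness of $L^p(\R)$ closing the loop. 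All the uniformity requirements are genuinely handled: the three $\epsilon/3$ terms in your final splitting are each uniform over $\mathcal{F}$, the decomposition $M_rf-h_i=(M_rf)1_{\R\setminus K'}+((M_rf)1_{K'}-h_i)$ is valid because the $h_i$ are supported in $K'$, and passing from a sup-norm net on $C(K')$ to an $L^p(K')$ net costs only the harmless factor $|K'|^{1/p}$. Two trivial points worth making explicit if you write this up: in the necessity of (ii) you may replace the compact $K$ by an interval $[-R;R]$ containing it without loss, and the argument covers $p=1$ since the H\"older step degenerates correctly there.
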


We want to apply this theorem to $p=2$ and $\mathcal{F}=\{f_n\star\psi_j\}_{n\in\N}$.

First of all, $\mathcal{F}$ is bounded: actually, from \eqref{eq:U_bounds}, $(f_n)_{n\in\N}$ itself is bounded (because $(U(f_n))_n$ converges and thus is bounded). It implies that $\{f_n\star\psi_j\}_n$ is bounded because $||f_n\star\psi_j||_2\leq||f_n||_2||\psi_j||_1$ (by Young's inequality).

Let us now prove (ii). Let any $\epsilon>0$ be fixed.

The sequence $\left(|f_n\star\psi_j|\right)_n$ converges in $L^2(\R)$ (to $h_j$, because $U(f_n)\to (h_j)_{j\in\Z}$ in $L^2_\Z(\R)$). So $\{|f_n\star\psi_j|\}_n$ is relatively compact in $L^2(\R)$. By the Riesz-Fr\'echet-Kolmogorov theorem, there exists $K\subset\R$ a compact set such that:
\begin{equation*}
\underset{n\in\N}{\sup\,}||\,|f_n\star\psi_j|\,||_{L^2(\R-K)}\leq \epsilon
\end{equation*}
But, for all $n$, $||\,|f_n\star\psi_j|\,||_{L^2(\R-K)}=||f_n\star\psi_j||_{L^2(\R-K)}$ so (ii) holds:
\begin{equation*}
\underset{n\in\N}{\sup\,}||f_n\star\psi_j||_{L^2(\R-K)}\leq \epsilon
\end{equation*}

We finally check (iii). Let $\epsilon>0$ be fixed. For any $h\in\R$:
\begin{equation*}
||(f_n\star\psi_j)(.+h)-(f_n\star\psi_j)||_2=||f_n\star(\psi_j(.-h)-\psi_j)||_2\leq||f_n||_2||\psi_j(.-h)-\psi_j||_1
\end{equation*}
As $\underset{n}{\sup\,}||f_n||_2<+\infty$ and $\underset{h\to 0}{\lim}\,||\psi_j(.-h)-\psi_j||_1=0$ (this property holds for any $L^1$ function), we have, for $\delta>0$ small enough:
\begin{equation*}
\underset{n}{\sup\,}||(f_n\star\psi_j)(.+h)-(f_n\star\psi_j)||_2\leq\epsilon
\quad\quad\forall h\in[-\delta;\delta]
\end{equation*}
\end{proof}

\begin{proof}[Proof of lemma \ref{lem:existence_g}]
We want to find $g\in L^2_+(\R)$ such that $\hat l_j=\hat g\hat\psi_j$ for every $j\in\Z$.

If $\omega\leq 0$, we set $\hat g(\omega)=0$. Then, for each $j$, we set $\hat g=\hat l_j/\hat\psi_j$ on the support of $\hat\psi_j$, which we denote by $\mbox{Supp }\hat\psi_j$. This definition is correct in the sense that:
\begin{equation*}
\mbox{if }j_1\ne j_2,\quad\frac{\hat l_{j_1}}{\hat\psi_{j_1}}=\frac{\hat l_{j_2}}{\hat\psi_{j_2}}\mbox{ a.e. on }\mbox{Supp }\hat\psi_{j_1}\cap \mbox{Supp }\hat\psi_{j_2}
\end{equation*}
Indeed, for all $n$, $(f_{\phi(n)}\star\psi_{j_1})\star\psi_{j_2}=(f_{\phi(n)}\star\psi_{j_2})\star\psi_{j_1}$ so, by taking the limit in $n$, $l_{j_1}\star\psi_{j_2}=l_{j_2}\star\psi_{j_1}$ and $\hat l_{j_1}\hat\psi_{j_2}=\hat l_{j_2}\hat\psi_{j_1}$.

We can note that, for all $j$, $\hat g\hat\psi_j=\hat l_j$. It is true on $\mbox{Supp }\hat\psi_j$, by definition. And, on $\R-\mbox{Supp }\hat\psi_j$, $\hat l_j=0=\hat g\hat\psi_j$ because $\hat l_j$ is the $L^2$-limit of $\hat f_{\phi(n)}\hat\psi_j$ and $\hat f_{\phi(n)}\hat\psi_j=0$ on $\R-\mbox{Supp }\hat\psi_j$.

The $\hat g$ we just defined belongs to $L^2(\R)$. Indeed, by \eqref{eq:approx_littlewood_paley}:
\begin{equation*}
||\hat g||_2^2\leq \frac{1}{B}\int_{\R^+}|\hat g|^2\underset{j}{\sum}|\hat\psi_j|^2=\frac{1}{B}\int_{\R^+}\underset{j}{\sum}|\hat l_j|^2=\frac{1}{B}\underset{j}{\sum}||l_j||_2^2
\end{equation*}
As $f_{\phi(n)}\star\psi_j$ goes to $l_j$ when $n$ goes to $\infty$ and $U(f_{\phi(n)})=\{|f_{\phi(n)}\star\psi_{j}|\}_j$ goes to $(h_j)_{j\in\Z}\in L^2_\Z(\R)$, we must have $|l_j|=h_j$ for each $j$. So $\frac{1}{B}\underset{j}{\sum}||l_j||_2^2=\frac{1}{B}\underset{j}{\sum}||h_j||_2^2=\frac{1}{B}||(h_j)_{j\in\Z}||_2^2<+\infty$ and $\hat g$ belongs to $L^2(\R)$.

As $\hat g\in L^2(\R)$, it is the Fourier transform of some $g\in L^2(\R)$. For all $j\in\Z$, as $\hat g\hat\psi_j=\hat l_j$, we have $g\star\psi_j=l_j$.

We now show that $f_{\phi(n)}\to g$ when $n\to\infty$.

For every $J,n\in\N$:
\begin{align*}
\sqrt{\underset{|j|>J}{\sum}||f_{\phi(n)}\star\psi_j||_2^2}
&=\sqrt{\underset{|j|>J}{\sum}||\,U(f_{\phi(n)})_j\,||_2^2}\\
&\leq\sqrt{\underset{|j|>J}{\sum}||U(f_{\phi(n)})_j-h_j||_2^2}+
\sqrt{\underset{|j|>J}{\sum}||h_j||_2^2}\\
&\leq||U(f_{\phi(n)})-(h_j)||_2+\sqrt{\underset{|j|>J}{\sum}||h_j||_2^2}
\end{align*}
So $\underset{n}{\limsup}\left(\underset{|j|>J}{\sum}||f_{\phi(n)}\star\psi_j||_2^2\right)\leq \underset{|j|>J}{\sum}||h_j||_2^2$ and:
\begin{align*}
\underset{n}{\limsup}\left(\underset{j\in\Z}{\sum}||f_{\phi(n)}\star\psi_j-g\star\psi_j||_2^2\right)&
\leq\underset{n}{\limsup}\left(\underset{|j|\leq J}{\sum}||f_{\phi(n)}\star\psi_j-g\star\psi_j||_2^2\right)\\
&\quad+\underset{n}{\limsup}\left(\underset{|j|>J}{\sum}||f_{\phi(n)}\star\psi_j-g\star\psi_j||_2^2\right)\\
&=\underset{n}{\limsup}\left(\underset{|j|>J}{\sum}||f_{\phi(n)}\star\psi_j-g\star\psi_j||_2^2\right)\\
&\leq \underset{|j|>J}{\sum}||h_j||_2^2
\end{align*}
This last quantity may be as small as desired, for $J$ large enough, so $\underset{j\in\Z}{\sum}||f_{\phi(n)}\star\psi_j-g\star\psi_j||_2^2\to 0$.

\noindent By \eqref{eq:approx_littlewood_paley}:
\begin{align*}
\underset{j\in\Z}{\sum}||f_{\phi(n)}\star\psi_j-g\star\psi_j||_2^2
&=\int_{\R}\left|\hat f_{\phi(n)}-\hat g\right|^2(\underset{j}{\sum}|\hat\psi_j|^2)\\
&\geq A \int_{\R}\left|\hat f_{\phi(n)}-\hat g\right|^2\\
&=\frac{A}{2\pi}||f_{\phi(n)}-g||_2^2
\end{align*}
so $||f_{\phi(n)}-g||_2\to 0$.
\end{proof}

\section{Proof of theorem \ref{thm:stability_dyadic_case}\label{app:stability_dyadic_case}}

In this section, we prove the theorem \ref{thm:stability_dyadic_case}, which gives a stability result for the case of dyadic wavelets.

For all $y>0$, we define:
\begin{equation*}
\mathcal{N}(y)=\underset{x\in\R,s=1,2}{\sup}|F^{(s)}(x+iy)|
\end{equation*}

The following lemma is not necessary to our proof but we will use it to progressively simplify our inequalities.
\begin{lem}\label{lem:relations_for_N}
For all $y_1,y_2\in\R^*_+$, if $y_1<y_2$:
\begin{equation}\label{eq:N_non_increasing}
\mathcal{N}(y_1)\geq\mathcal{N}(y_2)
\end{equation}
and for all $y_3\in[y_1;y_2]$:
\begin{equation}\label{eq:Ny123}
\mathcal{N}(y_3)\leq\mathcal{N}(y_1)^{\frac{y_2-y_3}{y_2-y_1}}\mathcal{N}(y_2)^{\frac{y_3-y_1}{y_2-y_1}}
\end{equation}
\end{lem}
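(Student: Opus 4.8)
The plan is to recognize both inequalities as instances of classical maximum-principle results for holomorphic functions, applied separately to $F^{(1)}$ and $F^{(2)}$. The only genuine work is to verify the hypotheses (boundedness and decay at infinity), which I would extract directly from the integral representation \eqref{eq:def_holomorphic_extension}. First I would record the basic estimate it provides: for $\Im z = y \geq y_0 > 0$,
\[
|F^{(s)}(x+iy)| \leq \frac{1}{2\pi}\int_0^\infty \omega^p\, |\widehat{f_+^{(s)}}(\omega)|\, e^{-\omega y}\,d\omega \leq \frac{1}{2\pi}\int_0^\infty \omega^p\, |\widehat{f_+^{(s)}}(\omega)|\, e^{-\omega y_0}\,d\omega,
\]
the right-hand side being finite by Cauchy--Schwarz, since $\widehat{f_+^{(s)}}\in L^2$ and $\omega\mapsto\omega^p e^{-\omega y_0}$ belongs to $L^2(0,\infty)$ for $p>0,\,y_0>0$. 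Thus each $F^{(s)}$ is bounded on every closed half-plane $\{\Im z\geq y_0\}$ (in particular on every horizontal strip $\{y_1\leq\Im z\leq y_2\}$); moreover $F^{(s)}(x+iy)\to 0$ as $|x|\to\infty$ by Riemann--Lebesgue, and as $y\to\infty$ by dominated convergence.

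For part (i), I would apply the Phragm\'en--Lindel\"of / maximum-modulus principle on the half-plane $\{\Im z\geq y_1\}$: since $F^{(s)}$ is holomorphic on the open half-plane, bounded on its closure, and vanishes at infinity, its modulus attains its supremum on the boundary line $\{\Im z=y_1\}$. Hence $|F^{(s)}(z)|\leq \sup_{x}|F^{(s)}(x+iy_1)|\leq\mathcal{N}(y_1)$ for every $z$ with $\Im z\geq y_1$; taking the supremum over the line $\{\Im z=y_2\}$ and over $s\in\{1,2\}$ gives $\mathcal{N}(y_2)\leq\mathcal{N}(y_1)$, which is \eqref{eq:N_non_increasing}.

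For part (ii), writing $M_s(y)=\sup_x|F^{(s)}(x+iy)|$ so that $\mathcal{N}=\max(M_1,M_2)$, I would apply Hadamard's three-lines theorem to $F^{(s)}$ on the horizontal strip $\{y_1\leq\Im z\leq y_2\}$, on which it is holomorphic and bounded by the estimate above (the theorem, usually stated for a vertical strip, transfers after the rotation $z\mapsto iz$; note that boundedness alone suffices here, no decay at infinity is needed). Setting $\theta=\frac{y_3-y_1}{y_2-y_1}$, this yields
\[
M_s(y_3)\leq M_s(y_1)^{1-\theta}\,M_s(y_2)^{\theta}\leq \mathcal{N}(y_1)^{1-\theta}\,\mathcal{N}(y_2)^{\theta},
\]
and taking the maximum over $s\in\{1,2\}$ gives $\mathcal{N}(y_3)\leq\mathcal{N}(y_1)^{(y_2-y_3)/(y_2-y_1)}\,\mathcal{N}(y_2)^{(y_3-y_1)/(y_2-y_1)}$, which is \eqref{eq:Ny123}.

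The main obstacle is entirely the verification of the hypotheses of these classical theorems on the unbounded domains: one must make sure each $F^{(s)}$ is genuinely bounded up to the boundary of the half-plane and strip, and that it decays at infinity so that Phragm\'en--Lindel\"of applies with no mass escaping at the point at infinity. All of this is delivered by the uniform bound displayed above together with Riemann--Lebesgue; once these facts are in place, both inequalities are immediate consequences of the maximum-modulus principle and Hadamard's three-lines theorem, respectively.
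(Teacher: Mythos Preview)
Your argument is correct. For \eqref{eq:Ny123} you do exactly what the paper does: apply the three-lines theorem (the paper's Theorem~\ref{thm:bounded_on_a_band}) to each $F^{(s)}$ on the strip $\{y_1<\Im z<y_2\}$, after checking boundedness from the integral formula.

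The only genuine difference is in how \eqref{eq:N_non_increasing} is obtained. You prove it directly, by Phragm\'en--Lindel\"of on the half-plane $\{\Im z\ge y_1\}$, having first established boundedness and decay at infinity from the integral representation. The paper instead deduces \eqref{eq:N_non_increasing} from \eqref{eq:Ny123}: it notes that $\mathcal{N}(y)$ stays bounded as $y\to+\infty$ and then lets $y_2\to+\infty$ in \eqref{eq:Ny123} with $y_1,y_3$ fixed, which collapses the right-hand side to $\mathcal{N}(y_1)$. The paper's route is marginally slicker---it needs only boundedness of $\mathcal{N}$ at infinity rather than actual decay, and it avoids a second appeal to a maximum-principle on an unbounded domain---while your route has the virtue of keeping the two inequalities logically independent and making the underlying reason for monotonicity (subharmonicity of $\log|F^{(s)}|$) more transparent.
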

\begin{proof}
The second inequality comes directly from theorem \ref{thm:bounded_on_a_band}, applied to functions $F^{(1)}$ and $F^{(2)}$ on the band $\{z\in\C\mbox{ s.t. }y_1<\Im z<y_2\}$.

The first inequality may be derived from the first one. The function $\mathcal{N}(y)$ is bounded when $y\to+\infty$. 
Keeping $y_1$ and $y_3$ fixed in \eqref{eq:Ny123} and letting $y_2$ go to $+\infty$ then gives:
\begin{equation*}
\mathcal{N}(y_3)\leq\mathcal{N}(y_1)
\end{equation*}
\end{proof}

We can now prove the theorem.

\begin{proof}[Proof of theorem \ref{thm:stability_dyadic_case}]
From the relation \eqref{eq:recall_F_f_psi_j} between $F^{(s)}$ and the $f^{(s)}\star\psi_j$ and from the hypotheses, the following inequalities hold for all $x\in[-M 2^j;M2^j]$:
\begin{gather*}
\left||F^{(1)}(x+i2^j)|^2-|F^{(2)}(x+i2^j)|^2\right|\leq\epsilon\mathcal{N}(2^j)^2\\
\left||F^{(1)}(x+i2^{j+1})|^2-|F^{(2)}(x+i2^{j+1})|^2\right|\leq\epsilon\mathcal{N}(2^{j+1})^2\\
|F^{(1)}(x+i2^j)|^2,|F^{(2)}(x+i2^j)|^2\geq c\mathcal{N}(2^j)^2\\
|F^{(1)}(x+i2^{j+1})|^2,|F^{(2)}(x+i2^{j+1})|^2\geq c\mathcal{N}(2^{j+1})^2
\end{gather*}

Let us set, for all $z$ such that $-2^{j+1}<\Im z<2^{j+1}$:
\begin{equation*}
G(z)=F^{(1)}(z+i2^{j+1})\overline{F^{(1)}(\overline{z}+i2^{j+1})}-F^{(2)}(z+i2^{j+1})\overline{F^{(2)}(\overline{z}+i2^{j+1})}
\end{equation*}
For all $z$ such that $\Im z=0$:
\begin{align*}
|G(z)|=\left||F^{(1)}(z+i2^{j+1})|^2-|F^{(2)}(z+i2^{j+1})|^2\right|
&\leq \epsilon\mathcal{N}(2^{j+1})^2
&\mbox{if }|\Re z|\leq M2^j\\
&\leq \mathcal{N}(2^{j+1})^2
&\mbox{if }|\Re z|>M2^j
\end{align*}
and for all $z$ such that $\Im z=3.2^{j-1}$:
\begin{align*}
|G(z)|&=|F^{(1)}(\Re z+7.2^{j-1}i)\overline{F^{(1)}(\Re z+2^{j-1}i)}-F^{(2)}(\Re z+7.2^{j-1}i)\overline{F^{(2)}(\Re z+2^{j-1}i)}|\\
&\leq 2\,\mathcal{N}(7.2^{j-1})\mathcal{N}(2^{j-1})
\end{align*}
We apply the lemma \ref{lem:hole_on_one_line} for $a=0,b=3.2^{j-1},t=2/3,A=\mathcal{N}(2^{j+1})^2,B=2\,\mathcal{N}(7.2^{j-1})\mathcal{N}(2^{j-1})$. It implies that, for all $x\in[-\lambda M2^j;\lambda M2^j]$:
\begin{equation*}
|G(x+i 2^j)|\leq 2^{2/3}\epsilon^{1/3-\alpha_M}\mathcal{N}(2^{j+1})^{2/3}\mathcal{N}(2^{j-1})^{2/3}\mathcal{N}(7.2^{j-1})^{2/3}
\end{equation*}
where $\alpha_M=\frac{4}{3}\frac{\exp\left(-\frac{2\pi}{3}(1-\lambda)M\right)}{1-\exp\left(-\frac{2\pi}{3}(1-\lambda)M\right)}$.

Replacing $G$ by its definition gives, for all $x\in[-\lambda M2^j;\lambda M2^j]$:
\begin{align*}
|F^{(1)}(x+i3.2^{j})\overline{F^{(1)}(x+i2^j)}&-F^{(2)}(x+i3.2^{j})\overline{F^{(2)}(x+i2^j)}|\\
&\leq 2^{2/3}\epsilon^{1/3-\alpha_M}\mathcal{N}(2^{j+1})^{2/3}\mathcal{N}(2^{j-1})^{2/3}\mathcal{N}(7.2^{j-1})^{2/3}\\
&\leq 2\epsilon^{1/3-\alpha_M}\mathcal{N}(2^{j+1})^{4/3}\mathcal{N}(2^{j-1})^{2/3}
\end{align*}
We used the equation \eqref{eq:N_non_increasing} to obtain the last inequality.

So, for all $x\in[-\lambda M2^j;\lambda M2^j]$:
\begin{align*}
\left|F^{(1)}\right.&(x+i3.2^{j})\overline{F^{(1)}(x+i2^j)}F^{(2)}(x+i2^j)\overline{F^{(2)}(x+i2^j)} \\
&\left.-F^{(2)}(x+i3.2^{j})\overline{F^{(2)}(x+i2^j)}F^{(1)}(x+i2^j)\overline{F^{(1)}(x+i2^j)}\right|\\
&\leq
|F^{(1)}(x+i3.2^{j})\overline{F^{(1)}(x+i2^j)}-F^{(2)}(x+i3.2^{j})\overline{F^{(2)}(x+i2^j)}|.|F^{(2)}(x+i2^j)\overline{F^{(2)}(x+i2^j)}|\\
&+|F^{(2)}(x+i3.2^{j})\overline{F^{(2)}(x+i2^j)}||F^{(2)}(x+i2^j)\overline{F^{(2)}(x+i2^j)}-F^{(1)}(x+i2^j)\overline{F^{(1)}(x+i2^j)}|\\
&\leq
2\epsilon^{1/3-\alpha_M}\mathcal{N}(2^{j+1})^{4/3}\mathcal{N}(2^{j-1})^{2/3}|F^{(2)}(x+i2^j)|^2 
 + \epsilon\mathcal{N}(2^j)^2|F^{(2)}(x+i3.2^{j})\overline{F^{(2)}(x+i2^j)}|
\end{align*}
Dividing by $|\overline{F^{(1)}(x+i2^j)}\overline{F^{(2)}(x+i2^j)}|$ gives:
\begin{align*}
|F^{(1)}(x+i3.2^j)&F^{(2)}(x+i2^j)-F^{(2)}(x+i3.2^j)F^{(1)}(x+i2^j)|\\
&\leq
2\epsilon^{1/3-\alpha_M}\mathcal{N}(2^{j+1})^{4/3}\mathcal{N}(2^{j-1})^{2/3}\frac{|F^{(2)}(x+i2^j)|}{|F^{(1)}(x+i2^j)|} \\
& + \epsilon\mathcal{N}(2^j)^2\frac{|F^{(2)}(x+i3.2^{j})|}{|F^{(1)}(x+i2^j)|}
\end{align*}
For each $x\in[-\lambda M2^j;\lambda M2^j]$, this relation also holds if we switch the roles of $F^{(1)}$ and $F^{(2)}$. Thus, we can assume that $|F^{(2)}(x+i2^j)|\leq|F^{(1)}(x+i2^j)|$. Using also the fact that $|F^{(1)}(x+i2^j)|\geq\sqrt{c}\mathcal{N}(2^j)$ yields (always for $x\in[-\lambda M2^j;\lambda M2^j]$):
\begin{align}
|F^{(1)}(x+i3.2^j)&F^{(2)}(x+i2^j)-F^{(2)}(x+i3.2^j)F^{(1)}(x+i2^j)|\nonumber\\
&\leq
2\epsilon^{1/3-\alpha_M}\mathcal{N}(2^{j+1})^{4/3}\mathcal{N}(2^{j-1})^{2/3}
 + \frac{\epsilon}{\sqrt{c}}\mathcal{N}(2^j)\mathcal{N}(3.2^j)\nonumber\\
&=2\mathcal{N}(2^j)\mathcal{N}(3.2^j)\left(\frac{\mathcal{N}(2^{j+1})^{4/3}\mathcal{N}(2^{j-1})^{2/3}}{\mathcal{N}(2^j)\mathcal{N}(3.2^j)} \epsilon^{1/3-\alpha_M}+\frac{\epsilon}{2\sqrt{c}} \right)
\nonumber\\
&\leq
2\mathcal{N}(2^j)\mathcal{N}(3.2^j)\left(\left(\frac{\mathcal{N}(2^{j-1})}{\mathcal{N}(2^{j+1})}\right)^{2/3} \epsilon^{1/3-\alpha_M}+\frac{\epsilon}{2\sqrt{c}} \right)\nonumber\\
&\leq
3\mathcal{N}(2^j)\mathcal{N}(3.2^j)\left(\frac{\mathcal{N}(2^{j-1})}{\mathcal{N}(2^{j+1})}\right)^{2/3} \epsilon^{1/3-\alpha_M}
\label{eq:dyadic_intermediate}
\end{align}
In the middle, we used the equation \eqref{eq:Ny123}: $\mathcal{N}(2^{j+1})\leq\mathcal{N}(2^j)^{1/2}\mathcal{N}(3.2^j)^{1/2}$. For the last inequality, we used the fact that $c\geq\epsilon$ so $\frac{\epsilon}{2\sqrt{c}}\leq\frac{\sqrt{\epsilon}}{2}\leq\frac{\epsilon^{1/3-\alpha_M}}{2}\leq\left(\frac{\mathcal{N}(2^{j-1})}{\mathcal{N}(2^{j+1})}\right)^{2/3} \frac{\epsilon^{1/3-\alpha_M}}{2}$.

For all $z$ such that $\Im z>-2^j$, we set:
\begin{equation*}
H(z)=F^{(1)}(z+i3.2^j)F^{(2)}(z+i2^j)-F^{(2)}(z+i3.2^j)F^{(1)}(z+i2^j)
\end{equation*}
From \eqref{eq:dyadic_intermediate}:
\begin{align*}
|H(z)|&\leq 2\mathcal{N}(2^j)\mathcal{N}(3.2^j)&\mbox{if }\Im z=0\mbox{ and }|\Re z|>\lambda M2^j\\
&\leq 2\mathcal{N}(2^j)\mathcal{N}(3.2^j)
\min\left(1,\frac{3}{2}\left(\frac{\mathcal{N}(2^{j-1})}{\mathcal{N}(2^{j+1})}\right)^{2/3} \epsilon^{1/3-\alpha_M}\right)&\mbox{if }\Im z=0\mbox{ and }|\Re z|\leq\lambda M2^j\\
&\leq 2\mathcal{N}(2^{j+3})\mathcal{N}(6.2^j)
&\mbox{if }\Im z=5.2^j
\end{align*}
We may apply the lemma \ref{lem:hole_on_one_line} again. For all $x\in[-\lambda^2M2^j;\lambda^2M2^j]$:
\begin{align*}
|H(x+i2^j)|&\leq 2 \min\left(1,\frac{3}{2}\left(\frac{\mathcal{N}(2^{j-1})}{\mathcal{N}(2^{j+1})}\right)^{2/3} \epsilon^{1/3-\alpha_M}\right)^{4/5-\alpha_M'}
\mathcal{N}(2^j)^{4/5}\mathcal{N}(3.2^j)^{4/5}\mathcal{N}(2^{j+3})^{1/5}\mathcal{N}(6.2^j)^{1/5}\\
&\leq 2 \min\left(1,\frac{3}{2}\left(\frac{\mathcal{N}(2^{j-1})}{\mathcal{N}(2^{j+1})}\right)^{2/3} \epsilon^{1/3-\alpha_M}\right)^{4/5-\alpha_M'}
\mathcal{N}(2^j)^{4/5}\mathcal{N}(2^{j+1})^{6/5}
\end{align*}
where $\alpha_M'=\frac{2}{5}\frac{\exp\left(-\frac{\pi}{5}\lambda(1-\lambda)M)\right)}{1-\exp\left(-\frac{\pi}{5}\lambda(1-\lambda)M)\right)}$.

Replacing $H$ by its definition and dividing by |$F^{(1)}(x+i2^{j+1})F^{(2)}(x+i2^{j+1})|$ (which is greater that $c\mathcal{N}(2^{j+1})^2$) gives:
\begin{align*}
\left|\frac{F^{(1)}(x+i2^{j+2})}{F^{(1)}(x+i2^{j+1})}-\frac{F^{(2)}(x+i2^{j+2})}{F^{(2)}(x+i2^{j+1})}\right|
&\leq
\frac{2}{c} \min\left(1,\left(\frac{3}{2}\frac{\mathcal{N}(2^{j-1})}{\mathcal{N}(2^{j+1})}\right)^{2/3} \epsilon^{1/3-\alpha_M}\right)^{4/5-\alpha_M'}\left(\frac{\mathcal{N}(2^j)}{\mathcal{N}(2^{j+1})}\right)^{4/5}
\end{align*}
As soon as $4/5-\alpha_M'>0$ and $1/3-\alpha_M>0$:
\begin{align*}
\left|\frac{F^{(1)}(x+i2^{j+2})}{F^{(1)}(x+i2^{j+1})}-\frac{F^{(2)}(x+i2^{j+2})}{F^{(2)}(x+i2^{j+1})}\right|
&\leq
\frac{3}{c} \left(\frac{\mathcal{N}(2^{j-1})}{\mathcal{N}(2^{j+1})}\right)^{8/15} \left(\frac{\mathcal{N}(2^j)}{\mathcal{N}(2^{j+1})}\right)^{4/5}\epsilon^{(1/3-\alpha_M)(4/5-\alpha_M')}\\
&\leq
\frac{3}{c} \left(\frac{\mathcal{N}(2^{j-1})}{\mathcal{N}(2^{j+1})}\right)^{4/3}\epsilon^{(1/3-\alpha_M)(4/5-\alpha_M')}\\
&=
\frac{3}{c} \left(\frac{N_{j-1}}{N_{j+1}}2^{2p}\right)^{4/3}\epsilon^{(1/3-\alpha_M)(4/5-\alpha_M')}
\end{align*}

So:
\begin{equation*}
\left|\frac{f^{(1)}\star\psi_{j+2}(x)}{f^{(1)}\star\psi_{j+1}(x)}-\frac{f^{(2)}\star\psi_{j+2}(x)}{f^{(2)}\star\psi_{j+1}(x)}\right|
\leq
\frac{3}{c}2^{\frac{11p}{3}} \left(\frac{N_{j-1}}{N_{j+1}}\right)^{4/3}\epsilon^{(1/3-\alpha_M)(4/5-\alpha_M')}
\end{equation*}
which is the desired result for $A=3.2^{\frac{11p}{3}}$.
\end{proof}

\section{Proof of the theorem \ref{thm:stability_a_less_than_2}\label{app:stability_a_less_than_2}}





In this whole section, as in the paragraph \ref{ss:case_a_less_than_2}, $k$ is assumed to be a fixed integer such that:
\begin{equation*}
a^{-k}< 2-a
\end{equation*}
and we define:
\begin{equation*}
c=1-\frac{a-1}{1-a^{-k}}
\end{equation*}

\begin{lem}\label{lem:estimation_from_one_modulus}
Let the following numbers be fixed:
\begin{equation*}
\epsilon\in]0;1[\quad\quad M>0\quad\quad \mu\in[0;M[\quad\quad j\in\Z
\end{equation*}

We assume that, for all $x\in[-Ma^j;Ma^j]$:
\begin{equation*}
\left||F^{(1)}(x+ia^j)|^2-|F^{(2)}(x+ia^j)|^2\right|\leq\epsilon \mathcal{N}(a^j)^2
\end{equation*}
Then, for all $x\in[-(M-\mu)a^j;(M-\mu) a^j]$:
\begin{align*}
\left|\overline{F^{(1)}(x+i(2a^j-a^{j+1}))}F^{(1)}(x+\right.&\left.ia^{j+1})-
\overline{F^{(2)}(x+i(2a^j-a^{j+1}))}F^{(2)}(x+ia^{j+1})\right|\\
&\leq 
\mathcal{N}(a^j)^{2c} \left(2\mathcal{N}(a^{j+1})\mathcal{N}(a^{j-k})\right)^{1-c}\epsilon^{c-\alpha}
\end{align*}
where:
\begin{equation*}
\alpha = 2\,\frac{e^{-\pi\mu}}{1-e^{-\pi \mu}} 
\end{equation*}
\end{lem}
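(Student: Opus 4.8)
The plan is to reduce the statement to the one-line interpolation lemma~\ref{lem:hole_on_one_line}, in the same spirit as the proof of theorem~\ref{thm:stability_dyadic_case}: I would encode both the hypothesis and the desired conclusion as the values, on two parallel horizontal lines, of a single auxiliary holomorphic function, and then propagate the bound from one line to an intermediate one. Concretely, set
\begin{equation*}
G(z)=F^{(1)}(z+ia^j)\overline{F^{(1)}(\overline{z}+ia^j)}-F^{(2)}(z+ia^j)\overline{F^{(2)}(\overline{z}+ia^j)}.
\end{equation*}
Both arguments lie in $\H$ whenever $-a^j<\Im z<a^j$, so $G$ is holomorphic there, and on the closed sub-strip $0\le\Im z\le a^j(1-a^{-k})$ one has $|G(z)|\le 2\mathcal{N}(a^j)\mathcal{N}(a^{j-k})<+\infty$, so $G$ is bounded and the lemma is applicable. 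On the real line $G(x)=|F^{(1)}(x+ia^j)|^2-|F^{(2)}(x+ia^j)|^2$, so the hypothesis gives $|G(x)|\le\epsilon\mathcal{N}(a^j)^2$ for $|x|\le Ma^j$ and the trivial bound $|G(x)|\le 2\mathcal{N}(a^j)^2$ elsewhere.

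The two remaining ingredients are a bound on a higher line and the identification of the evaluation height. On the line $\Im z=a^j(1-a^{-k})$ the arguments $z+ia^j$ and $\overline{z}+ia^j$ have imaginary parts $2a^j-a^{j-k}$ and $a^{j-k}$, so $|G(z)|\le 2\mathcal{N}(2a^j-a^{j-k})\mathcal{N}(a^{j-k})\le 2\mathcal{N}(a^{j+1})\mathcal{N}(a^{j-k})$, where the last step uses the monotonicity of $\mathcal{N}$ (lemma~\ref{lem:relations_for_N}) together with $2a^j-a^{j-k}\ge a^{j+1}$, itself a direct consequence of the standing hypothesis $a^{-k}<2-a$. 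Taking the height $y=a^{j+1}-a^j=a^j(a-1)$, a substitution shows that $z+ia^j=x+ia^{j+1}$ and $\overline{z}+ia^j=x+i(2a^j-a^{j+1})$, so $G(x+iy)$ is exactly the difference on the left-hand side of the lemma; the same inequality $a^{-k}<2-a$ ensures $0<y<a^j(1-a^{-k})$, placing this height strictly between the two lines at relative position $y/(a^j(1-a^{-k}))=(a-1)/(1-a^{-k})=1-c$.

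It then remains to invoke lemma~\ref{lem:hole_on_one_line} on the strip $0\le\Im z\le a^j(1-a^{-k})$, with the good bound $\epsilon\mathcal{N}(a^j)^2$ on $|x|\le Ma^j$, the weak bound $2\mathcal{N}(a^j)^2$ on the rest of the real line, the uniform bound $B=2\mathcal{N}(a^{j+1})\mathcal{N}(a^{j-k})$ on the upper line, and interpolation parameter $t=1-c$. The two-constants estimate weights the lower line by $1-t=c$ and the upper line by $t=1-c$, producing, for $|x|\le(M-\mu)a^j$, a bound of the order of $(\epsilon\mathcal{N}(a^j)^2)^{c}B^{1-c}=\mathcal{N}(a^j)^{2c}\bigl(2\mathcal{N}(a^{j+1})\mathcal{N}(a^{j-k})\bigr)^{1-c}\epsilon^{c}$, which is exactly the claimed estimate except for the correction in the exponent of $\epsilon$. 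That correction is the harmonic-measure loss caused by the good bound holding only on $|x|\le Ma^j$: shrinking the evaluation interval to $|x|\le(M-\mu)a^j$ leaves a horizontal margin $\mu a^j$, and the harmonic measure of the complementary part of the real line decays exponentially in that margin, which is what yields the factor $\epsilon^{-\alpha}$ with $\alpha=2e^{-\pi\mu}/(1-e^{-\pi\mu})$.

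I expect the main obstacle to be precisely this quantitative control of the harmonic-measure correction: one must follow how the margin $\mu a^j$, the strip height $a^j(1-a^{-k})$ and the interpolation weight combine, and check that the clean decay rate appearing in $\alpha$ is a conservative (indeed larger than necessary) bound for the true rate, which is governed by the strip height and is in fact faster. By contrast, the algebraic points — the inequality $2a^j-a^{j-k}\ge a^{j+1}$, the computation placing $y$ at relative position $1-c$, and the harmless factors of $2$ between the weak bound $2\mathcal{N}(a^j)^2$ and the normalization expected by lemma~\ref{lem:hole_on_one_line} — are routine and can be absorbed into constants.
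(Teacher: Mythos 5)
Your proposal is correct and takes essentially the same route as the paper's own proof: the same auxiliary holomorphic function, the same strip $\{z : 0\le\Im z\le a^j(1-a^{-k})\}$, the same application of lemma~\ref{lem:hole_on_one_line} at relative height $t=1-c$ (using $2a^j-a^{j-k}\ge a^{j+1}$ to control the top line), and the same harmonic-measure estimate giving the exponent $c-\alpha$. The only point to fix so that the constants come out exactly as stated: outside $[-Ma^j;Ma^j]$ the trivial bound is $|G(x)|\le\mathcal{N}(a^j)^2$ (it is a difference of two numbers lying in $[0;\mathcal{N}(a^j)^2]$), not $2\mathcal{N}(a^j)^2$; your extra factor $2$ cannot simply be absorbed, since after interpolation it survives as a spurious $2^{c-f(z)}$, whereas with the sharp bound lemma~\ref{lem:hole_on_one_line} yields precisely $\mathcal{N}(a^j)^{2c}\left(2\mathcal{N}(a^{j+1})\mathcal{N}(a^{j-k})\right)^{1-c}\epsilon^{c-\alpha}$.
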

\begin{proof}
We set:
\begin{equation*}
H(z)=\overline{F^{(1)}(\overline{z}+ia^j)}F^{(1)}(z+ia^j)-\overline{F^{(2)}(\overline{z}+ia^j)}F^{(2)}(z+ia^j)
\end{equation*}
When $y=0$, $|H(x+iy)|=\left||F^{(1)}(x+ia^j)|^2-|F^{(2)}(x+ia^j)|^2\right|$. So:
\begin{align*}
|H(x+iy)|&\leq \epsilon\mathcal{N}(a^j)^2\mbox{ if }x\in[-Ma^j;Ma^j]\\
&\leq \mathcal{N}(a^j)^2\mbox{ if }x\notin[-Ma^j;Ma^j]
\end{align*}

When $y=a^j-a^{j-k}$:
\begin{align*}
|H(x+iy)|&=|\overline{F^{(1)}(x+ia^{j-k})}F^{(1)}(x+i(2a^j-a^{j-k}))
-\overline{F^{(2)}(x+ia^{j-k})}F^{(2)}(x+i(2a^j-a^{j-k}))|\\
&\leq 2\mathcal{N}(2a^j-a^{j-k})\mathcal{N}(a^{j-k})
\hskip 2cm (\forall x\in\R)
\end{align*}

We apply the lemma \ref{lem:hole_on_one_line} to $H$, restricted to the band $\{z\in\C\mbox{ s.t. }\Im z\in[0;a^j-a^{j-k}]\}$.

From this lemma, when $y=a^{j+1}-a^j$ and $x\in[-\mu Ma^j;\mu Ma^j]$:
\begin{equation*}
|H(x+iy)|\leq
\epsilon^{f(x+iy)}\mathcal{N}(a^j)^{2c} \left(2\mathcal{N}(2a^j-a^{j-k})\mathcal{N}(a^{j-k})\right)^{1-c}
\end{equation*}
where $c=1-\frac{a-1}{1-a^{-k}}$ and:
\begin{align*}
f(x+iy)\geq c - 2\frac{a-1}{1-a^{-k}}\frac{e^{-\pi\frac{Ma^j-|x|}{a^j-a^{j-k}}}}{1-e^{-\pi\frac{Ma^j-|x|}{a^j-a^{j-k}}}}
\end{align*}
Because of the definition of $k$, $\frac{a-1}{1-a^{-k}}\leq 1$. Moreover, $\frac{Ma^j-|x|}{a^j-a^{j-k}}\geq \frac{\mu}{1-a^{-k}}\geq \mu$, so:
\begin{equation*}
f(x+iy)\geq c-2\frac{e^{-\pi \mu}}{1-e^{-\pi \mu}}=c-\alpha
\end{equation*}
Replacing $H$ by its definition yields:
\begin{align*}
\left|\overline{F^{(1)}(x+i(2a^j-a^{j+1}))}F^{(1)}(x+ia^{j+1}) -
\right.&\left.
\overline{F^{(2)}(x+i(2a^j-a^{j+1}))}F^{(2)}(x+ia^{j+1})\right|\\
&=|H(x+i(a^{j+1}-a^j))|\\
&\leq\epsilon^{c-\alpha}\mathcal{N}(a^j)^{2c} \left(2\mathcal{N}(2a^j-a^{j-k})\mathcal{N}(a^{j-k})\right)^{1-c}
\end{align*}
To conclude, it suffices to note that, because of the way we chose $k$, $2a^j-a^{j-k}\geq a^{j+1}$ so, from \ref{lem:relations_for_N}, $\mathcal{N}(2a^j-a^{j-k})\leq\mathcal{N}(a^{j+1})$.
\end{proof}

\begin{thm}\label{thm:recurrence}
Let the following numbers be fixed:
\begin{equation*}
\epsilon,\kappa\in]0;1[\mbox{ with }\kappa\geq\epsilon^{2(1-c)}
\hskip 1cm
M>0
\hskip 1cm
\mu\in[0;M[
\hskip 1cm
j\in\Z
\hskip 1cm
K\in\N
\end{equation*}
We assume that, for any $n\in\{j+1,...,j+K\}$ and $x\in[-Ma^{j+K};Ma^{j+K}]$:
\begin{gather}
\left||F^{(1)}(x+ia^n)|^2-|F^{(2)}(x+ia^n)|^2 \right|\leq\epsilon\mathcal{N}(a^n)^2\label{eq:hyp_rec_1}\\
|F^{(1)}(x+ia^n)|^2,|F^{(2)}(x+ia^n)|^2\geq\kappa\mathcal{N}(a^n)^2\label{eq:hyp_rec_2}
\end{gather}

We define recursively:
\begin{align*}
n_0&=j+K&w_0&=a^{j+K}\\
\forall l\in\N\hskip 1cm
n_{l+1}&=n_l-2&w_{l+1}&=w_l-(a-1)^2a^{n_{l+1}}
\end{align*}
We define:
\begin{equation*}
D_l=\underset{s=0}{\overset{l-1}{\prod}}\left(\frac{\mathcal{N}(a^{n_s-1-k})}{\mathcal{N}(a^{n_s-2})}\right)
\hskip 1cm\mbox{and}\hskip 1cm
c_l=c-2\left(1+\frac{2}{a}\frac{a^2-1}{a+2}\left(\underset{s=0}{\overset{l-1}{\sum}}a^{-2s}\right)\right)\left(\frac{e^{-\pi\mu}}{1-e^{-\pi\mu}}\right)
\end{equation*}
For any $l\geq 0$ such that $n_l\geq j$ and $M-(l+1)\mu>0$, we have, provided that $c_l<1$:
\begin{align}
\frac{1}{\mathcal{N}(w_l)\mathcal{N}(a^{n_l})}\left|\overline{F^{(1)}(x+iw_l)}F^{(1)}(x+ia^{n_l})-\right.&\left.\overline{F^{(2)}(x+iw_l)}
F^{(2)}(x+ia^{n_l})\right|\nonumber\\
&\leq 3 D_l\left(\frac{2\kappa^{-l/2}-\kappa^{-(l-1)/2}-1}{1-\sqrt{\kappa}}\right)\epsilon^{c_l}\nonumber\\
&\left(\forall x\in[-(M-(l+1)\mu)a^{j+K};(M-(l+1)\mu)a^{j+K}]\right)\label{eq:recurrence}
\end{align}
\end{thm}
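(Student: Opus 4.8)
The plan is to establish \eqref{eq:recurrence} by induction on $l$, following precisely the scheme announced after Theorem \ref{thm:stability_a_less_than_2}. The base case $l=0$ is immediate: since $w_0=a^{n_0}=a^{j+K}$, the left-hand side of \eqref{eq:recurrence} equals $\mathcal{N}(a^{j+K})^{-2}\bigl||F^{(1)}(x+ia^{j+K})|^2-|F^{(2)}(x+ia^{j+K})|^2\bigr|$, which is at most $\epsilon$ by hypothesis \eqref{eq:hyp_rec_1} at $n=j+K$. On the right-hand side $D_0=1$ (empty product) and the $\kappa$-bracket equals $\frac{2-\sqrt{\kappa}-1}{1-\sqrt{\kappa}}=1$, so it reads $3\epsilon^{c_0}$; since $c_0\le c<1$ and $\epsilon\in(0,1)$ we have $\epsilon^{c_0}\ge\epsilon$, whence $\epsilon\le 3\epsilon^{c_0}$ and the base case holds.

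For the step from $l$ to $l+1$ (which requires $n_{l+1}\ge j$, i.e. $n_l\ge j+2$, so that the frequency $n_l-1$ lies in $\{j+1,\dots,j+K\}$ and \eqref{eq:hyp_rec_1}–\eqref{eq:hyp_rec_2} apply there), I would proceed in three moves. First, apply Lemma \ref{lem:estimation_from_one_modulus} with its index equal to $n_l-1$: from $|F^{(1)}(x+ia^{n_l-1})|^2\approx|F^{(2)}(x+ia^{n_l-1})|^2$ it produces, on a slightly shorter interval, a bound of order $\mathcal{N}(a^{n_l-1})^{2c}\bigl(2\mathcal{N}(a^{n_l})\mathcal{N}(a^{n_l-1-k})\bigr)^{1-c}\epsilon^{c-\alpha}$ on $\bigl|\overline{F^{(1)}(x+iv)}F^{(1)}(x+ia^{n_l})-\overline{F^{(2)}(x+iv)}F^{(2)}(x+ia^{n_l})\bigr|$, where $v=2a^{n_l-1}-a^{n_l}$. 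Second, combine this with the induction hypothesis exactly as in the dyadic appendix: writing $A_r=\overline{F^{(r)}(x+iw_l)}F^{(r)}(x+ia^{n_l})$ and $B_r=\overline{F^{(r)}(x+iv)}F^{(r)}(x+ia^{n_l})$, one has $A_r\overline{B_r}=|F^{(r)}(x+ia^{n_l})|^2\,\overline{F^{(r)}(x+iw_l)}F^{(r)}(x+iv)$, so the symmetrized cross-difference $A_1\overline{B_1}|F^{(2)}(x+ia^{n_l})|^2-A_2\overline{B_2}|F^{(1)}(x+ia^{n_l})|^2$ splits into three error terms controlled by $|A_1-A_2|$, $|B_1-B_2|$ and $\bigl||F^{(1)}(x+ia^{n_l})|^2-|F^{(2)}(x+ia^{n_l})|^2\bigr|\le\epsilon\mathcal{N}^2$; dividing, with the usual reduction $|F^{(2)}(x+ia^{n_l})|\le|F^{(1)}(x+ia^{n_l})|$ and the lower bound \eqref{eq:hyp_rec_2}, yields \eqref{eq:recurrence_almost}, a bound on $\bigl|\overline{F^{(1)}(x+iw_l)}F^{(1)}(x+iv)-\overline{F^{(2)}(x+iw_l)}F^{(2)}(x+iv)\bigr|$ at the cost of one factor $\kappa^{-1/2}$.

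The third, decisive move is the holomorphic extension. The function $z\mapsto\overline{F^{(1)}(\overline{z}+iw_l)}F^{(1)}(z+iv)-\overline{F^{(2)}(\overline{z}+iw_l)}F^{(2)}(z+iv)$ is holomorphic, and its modulus on $\Im z=0$ is the quantity just bounded, so I apply Lemma \ref{lem:hole_on_one_line} to push the estimate up to the height $\eta=(a-1)^2a^{n_{l+1}}$. The recursion closes because of the two algebraic identities $v+\eta=a^{n_l-2}=a^{n_{l+1}}$ and $w_l-\eta=w_{l+1}$, both consequences of $a^{n_l}=a^2a^{n_{l+1}}$ and $a^{n_l-1}=a\,a^{n_{l+1}}$; evaluating the extension at $x+i\eta$ therefore returns exactly $\overline{F^{(r)}(x+iw_{l+1})}F^{(r)}(x+ia^{n_{l+1}})$, i.e. the level-$(l+1)$ quantity. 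The interpolation performed by Lemma \ref{lem:hole_on_one_line} turns $\epsilon^{c_l}$ into a lower power and adds a fresh error of the form $\frac{e^{-\pi\mu}}{1-e^{-\pi\mu}}$, which is exactly what transforms $c_l$ into $c_{l+1}$; the monotonicity and log-convexity of $\mathcal{N}$ from Lemma \ref{lem:relations_for_N} simplify the $\mathcal{N}$-products and feed the ratios $\mathcal{N}(a^{n_s-1-k})/\mathcal{N}(a^{n_s-2})$ into $D_l$; the accumulated $\kappa^{-1/2}$ factors telescope into $\frac{2\kappa^{-l/2}-\kappa^{-(l-1)/2}-1}{1-\sqrt{\kappa}}$; and the hypothesis $\kappa\ge\epsilon^{2(1-c)}$ is what lets me drop the $\min(1,\cdot)$ truncation in Lemma \ref{lem:hole_on_one_line} and retain a clean power of $\epsilon$.

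I expect the real difficulty to be entirely bookkeeping rather than conceptual: one must check that the interpolation exponents, the $\kappa^{-1/2}$ prefactors, and the $\mathcal{N}$-ratios accumulate into precisely $c_l$, the stated $\kappa$-expression and $D_l$, while simultaneously verifying that the admissible interval, which shrinks by $\mu$ at each step, stays nonempty (this is the role of the condition $M-(l+1)\mu>0$) and that $c_l<1$ so the interpolation remains meaningful. In short, the architecture is the four-step induction above; the labor is in propagating the constants correctly, exactly as in the proof of Theorem \ref{thm:stability_dyadic_case} but iterated $K/2$ times.
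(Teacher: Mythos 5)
Your overall architecture is the paper's: induction on $l$, the same base case, and a first step that combines Lemma \ref{lem:estimation_from_one_modulus} at index $n_l-1$ with the induction hypothesis and \eqref{eq:hyp_rec_1}--\eqref{eq:hyp_rec_2} through a three-term telescoping (your decomposition via $A_r,B_r$ is algebraically equivalent to the paper's lines \eqref{eq:line_1}--\eqref{eq:line_3}). The genuine gap is in your third move. You propose to apply the one-sided Lemma \ref{lem:hole_on_one_line} to $H(z)=\overline{F^{(1)}(\overline{z}+iw_l)}F^{(1)}(z+iv)-\overline{F^{(2)}(\overline{z}+iw_l)}F^{(2)}(z+iv)$ on the band $\{0<\Im z<y_l\}$, $y_l=w_l-v$, with the $\epsilon$-improved bound known only on the line $\Im z=0$. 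The paper instead applies Lemma \ref{lem:holes_on_both_lines}, and this is not cosmetic: it is made possible by the symmetry $H(x+iy_l)=\overline{H(x)}$, which holds precisely because $w_l-y_l=v$ and $v+y_l=w_l$, so the bound proved in the first part on the bottom line is automatically available on the top line too. You never observe this symmetry. With the one-sided lemma, the conclusion has exponent $f(z)\geq(1-t)-2t\frac{e^{-\pi\mu'}}{1-e^{-\pi\mu'}}$, i.e.\ the power of $\epsilon$ degrades \emph{multiplicatively} by $(1-t_l)$, where $t_l=(a-1)^2a^{n_l-2}/y_l$ is of order $\frac{a^2-1}{a+2}\,a^{-2l-1}$ and is independent of $\mu$.

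This multiplicative loss is fatal to the statement as written, whose recursion is purely additive: $c_{l+1}=c_l-4\,\frac{a^2-1}{a+2}\,a^{-2l-1}\frac{e^{-\pi\mu}}{1-e^{-\pi\mu}}$, a correction that vanishes as $\mu\to\infty$. Under your scheme the exponent after one step is at most about $c_l(1-t_l)$; for instance with $a$ close to $2$ one has $t_0\approx 3/8$, so you would obtain exponent $\approx 0.6\,c_0$ instead of $c_0$ minus an exponentially small term, and after $K/2$ steps you reach roughly $c\prod_{s}(1-t_s)$, strictly below the stated $c_{K/2}$ no matter how large $M/K$ is. Hence the induction with the theorem's $c_l$ does not close; your route proves only a strictly weaker bound. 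The fix is exactly the paper's device: record the symmetry $H(x+iy_l)=\overline{H(x)}$, apply Lemma \ref{lem:holes_on_both_lines} with $a=0$, $b=y_l$, and then use $B^{f(z)}\leq B$ (since $B\geq 1$) together with $c_lf(z)\geq c_l-(1-f(z))$ (since $c_l\leq 1$) to turn the interpolation into the additive loss $c_l\mapsto c_{l+1}$. A last small correction: the hypothesis $\kappa\geq\epsilon^{2(1-c)}$ is not used to remove a truncation in the interpolation lemma; it is used in the first part to absorb the modulus-difference term, via $\frac{\epsilon}{\kappa}\leq\frac{\epsilon^{c}}{\sqrt{\kappa}}$.
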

\begin{proof}
We procede by induction over $l$.

For $l=0$, \eqref{eq:recurrence} is a direct consequence of \eqref{eq:hyp_rec_1}. Indeed, $w_0=a^{n_0},D_0=1,c_0<1$ so, for $x\in[-Ma^{j+K};Ma^{j+K}]$:
\begin{equation*}
\frac{1}{\mathcal{N}(a^{n_0})^2}
\left||F^{(1)}(x+ia^{n_0})|^2-|F^{(2)}(x+ia^{n_0})|^2\right|
\leq \epsilon\leq 3D_0\epsilon^{c_0}
\end{equation*}

We now suppose that \eqref{eq:recurrence} holds for $l$ and prove it for $l+1$.

We procede in two parts. First, we use the induction hypothesis to bound the function $\left|\overline{F^{(1)}(x+iw_l)}F^{(1)}(x+i(2a^{n_l-1}-a^{n_l}))-\overline{F^{(2)}(x+iw_l)}F^{(2)}(x+i(2a^{n_l-1}-a^{n_l}))\right|$. In a second part, we use this bound to obtain the desired result.

\nl
\textbf{First part:} by triangular inequality,
\begin{align}
\left|\overline{F^{(1)}(x+iw_l)}\right.&\left.F^{(1)}(x+i(2a^{n_l-1}-a^{n_l}))-\overline{F^{(2)}(x+iw_l)}
F^{(2)}(x+i(2a^{n_l-1}-a^{n_l}))\right|\nonumber\\
&\leq \left|\overline{F^{(1)}(x+iw_l)}F^{(1)}(x+ia^{n_l})-\overline{F^{(2)}(x+iw_l)}F^{(2)}(x+ia^{n_l})\right|\label{eq:line_1}\\
&\hskip 2cm\times\left|\frac{F^{(1)}(x+i(2a^{n_l-1}-a^{n_l}))}{F^{(1)}(x+ia^{n_l})}\right|\nonumber\\
&+\left|\frac{F^{(2)}(x+ia^{n_l})}{F^{(1)}(x+ia^{n_l})}-\frac{\overline{F^{(1)}(x+ia^{n_l})}}{\overline{F^{(2)}(x+ia^{n_l})}}
\right|\label{eq:line_2}\\
&\hskip 2cm\times\left|
\overline{F^{(2)}(x+iw_l)}F^{(1)}(x+i(2a^{n_l-1}-a^{n_l}))
\right|\nonumber\\
&+\left|\overline{F^{(1)}(x+ia^{n_l})}F^{(1)}(x+i(2a^{n_l-1}-a^{n_l}))-\overline{F^{(2)}(x+ia^{n_l})}F^{(2)}(x+i(2a^{n_l-1}-a^{n_l}))\right|\label{eq:line_3}\\
&\hskip 2cm\times\left| \frac{\overline{F^{(2)}(x+iw_l)}}{\overline{F^{(2)}(x+ia^{n_l})}}
\right|\nonumber
\end{align}
By the induction hypothesis, for $x\in[-(M-2l\mu)a^{j+K};(M-2l\mu)a^{j+K}]$, \eqref{eq:line_1} is bounded by:
\begin{align*}
\left|\overline{F^{(1)}(x+iw_l)}F^{(1)}(x+ia^{n_l})\right.&\left.-\overline{F^{(2)}(x+iw_l)}F^{(2)}(x+ia^{n_l})\right|\\
&\leq 3D_l\left(\frac{2\kappa^{-l/2}-\kappa^{-(l-1)/2}-1}{1-\sqrt{\kappa}}\right)\mathcal{N}(w_l)\mathcal{N}(a^{n_l})\epsilon^{c_l}
\end{align*}
Because of \eqref{eq:hyp_rec_1} and \eqref{eq:hyp_rec_2} (for $n=n_l$), \eqref{eq:line_2} is bounded by:
\begin{align*}
\left|\frac{F^{(2)}(x+ia^{n_l})}{F^{(1)}(x+ia^{n_l})}-\frac{\overline{F^{(1)}(x+ia^{n_l})}}{\overline{F^{(2)}(x+ia^{n_l})}}
\right|
=\left|\frac{|F^{(2)}(x+ia^{n_l})|^2-|F^{(1)}(x+ia^{n_l})|^2}{F^{(1)}(x+ia^{n_l})F^{(2)}(x+ia^{n_l})}\right|
\leq \frac{\epsilon}{\kappa}
\end{align*}
Finally, from the lemma \ref{lem:estimation_from_one_modulus} applied to $j=n_l-1$, \eqref{eq:line_3} is bounded by:
\begin{align*}
\left|\overline{F^{(1)}(x+ia^{n_l})}F^{(1)}(x+i(2a^{n_l-1}-a^{n_l}))
\right.&-\left.
\overline{F^{(2)}(x+ia^{n_l})}F^{(2)}(x+i(2a^{n_l-1}-a^{n_l}))\right|\\
&\leq \mathcal{N}(a^{n_l-1})^{2c}(2\mathcal{N}(a^{n_l})\mathcal{N}(a^{n_l-1-k}))^{1-c}\epsilon^{c-\alpha}
\end{align*}
for all $x\in[-Ma^{j+K}+\mu a^j;Ma^{j+K}-\mu a^j]\supset [-(M-(l+1)\mu)a^{j+K};(M-(2l+1)\mu)a^{j+K}]$.

We insert these bounds into the triangular inequality. We also use the fact that $|F^{(1)}(x+ia^{n_l})|,|F^{(2)}(x+ia^{n_l})|\geq\sqrt{\kappa}\mathcal{N}(a^{n_l})$. We get, for any $x\in[-(M-(l+1)\mu)a^{j+K};(M-(l+1)\mu)a^{j+K}]$:
\begin{align*}
\left|\overline{F^{(1)}(x+iw_l)}\right.&\left.F^{(1)}(x+i(2a^{n_l-1}-a^{n_l}))-\overline{F^{(2)}(x+iw_l)}
F^{(2)}(x+i(2a^{n_l-1}-a^{n_l}))\right|\\
&\leq \frac{1}{\sqrt{\kappa}} 3D_l \left(\frac{2\kappa^{-l/2}-\kappa^{-(l-1)/2}-1}{1-\sqrt{\kappa}}\right)\mathcal{N}(w_l)\mathcal{N}(2a^{n_l-1}-a^{n_l})\epsilon^{c_l}\\
&\quad+ \frac{\epsilon}{\kappa}\mathcal{N}(w_l)\mathcal{N}(2a^{n_l-1}-a^{n_l})\\
&\quad+ \frac{2^{1-c}}{\sqrt{\kappa}}\frac{\mathcal{N}(w_l)}{\mathcal{N}(a^{n_l})^c}\mathcal{N}(a^{n_l-1})^{2c}\mathcal{N}(a^{n_l-1-k})^{1-c}\epsilon^{c-\alpha}
\end{align*}
We must now simplify this inequality.

First, $2a^{n_l-1}-a^{n_l}=ca^{n_l-1}+(1-c)a^{n_l-1-k}$ so, from the lemma \ref{lem:relations_for_N}, $\mathcal{N}(2a^{n_l-1}-a^{n_l})\leq\mathcal{N}(a^{n_l-1})^c\mathcal{N}(a^{n_l-1-k})^{1-c}$. So:
\begin{align*}
\left|\overline{F^{(1)}(x+iw_l)}\right.&\left.F^{(1)}(x+i(2a^{n_l-1}-a^{n_l}))-\overline{F^{(2)}(x+iw_l)}
F^{(2)}(x+i(2a^{n_l-1}-a^{n_l}))\right|\\
&\leq \mathcal{N}(w_l)\mathcal{N}(a^{n_l-1})^c\mathcal{N}(a^{n_l-1-k})^{1-c}\\
&\quad\times\left(
\frac{1}{\sqrt{\kappa}} 3D_l \left(\frac{2\kappa^{-l/2}-\kappa^{-(l-1)/2}-1}{1-\sqrt{\kappa}}\right)\epsilon^{c_l}+ \frac{\epsilon}{\kappa}
+ \frac{2^{1-c}}{\sqrt{\kappa}}\frac{\mathcal{N}(a^{n_l-1})^{c}}{\mathcal{N}(a^{n_l})^c}\epsilon^{c-\alpha}\right)
\end{align*}
Now we note that $1\leq\frac{\mathcal{N}(a^{n_l-1})^c}{\mathcal{N}(a^{n_l})^c}$ (from the lemma \ref{lem:relations_for_N} again, because $a^{n_l-1}\leq a^{n_l}$). Because $\kappa\geq\epsilon^{2(1-c)}$, we also have $\frac{\epsilon}{\kappa}\leq\frac{\epsilon^c}{\sqrt{\kappa}}\leq\frac{\epsilon^{c-\alpha}}{\sqrt{\kappa}}$. And as $c-\alpha\geq c_l$, $\epsilon^{c-\alpha}\leq\epsilon^{c_l}$. This gives:
\begin{align*}
\left|\overline{F^{(1)}(x+iw_l)}\right.&\left.F^{(1)}(x+i(2a^{n_l-1}-a^{n_l}))-\overline{F^{(2)}(x+iw_l)}
F^{(2)}(x+i(2a^{n_l-1}-a^{n_l}))\right|\\
&\leq \frac{\epsilon^{c_l}}{\sqrt{\kappa}}\mathcal{N}(w_l)\frac{\mathcal{N}(a^{n_l-1})^{2c}\mathcal{N}(a^{n_l-1-k})^{1-c}}{\mathcal{N}(a^{n_l})^c}
\left( 3D_l \left(\frac{2\kappa^{-l/2}-\kappa^{-(l-1)/2}-1}{1-\sqrt{\kappa}}\right)+1
+{2^{1-c}}\right)
\end{align*}
If we bound $2^{1-c}$ by $2$ and notice that $D_l\geq 1$ (because, from \ref{lem:relations_for_N}, it is a product of terms bigger that $1$), we have:
\begin{align*}
\left|\overline{F^{(1)}(x+iw_l)}\right.&\left.F^{(1)}(x+i(2a^{n_l-1}-a^{n_l}))-\overline{F^{(2)}(x+iw_l)}
F^{(2)}(x+i(2a^{n_l-1}-a^{n_l}))\right|\\
&\leq \frac{\epsilon^{c_l}}{\sqrt{\kappa}}3D_l\mathcal{N}(w_l)\frac{\mathcal{N}(a^{n_l-1})^{2c}\mathcal{N}(a^{n_l-1-k})^{1-c}}{\mathcal{N}(a^{n_l})^c}
\left( \left(\frac{2\kappa^{-l/2}-\kappa^{-(l-1)/2}-1}{1-\sqrt{\kappa}}\right)+1\right)\\
&\leq 3\epsilon^{c_l}D_l\mathcal{N}(w_l)\frac{\mathcal{N}(a^{n_l-1})^{2c}\mathcal{N}(a^{n_l-1-k})^{1-c}}{\mathcal{N}(a^{n_l})^c}
\left(\frac{2\kappa^{-(l+1)/2}-\kappa^{-l/2}-1}{1-\sqrt{\kappa}}\right)
\end{align*}

Finally, from \ref{lem:relations_for_N}, we have $\mathcal{N}(a^{n_l-1})\leq\mathcal{N}(a^{n_l})^{1/2}\mathcal{N}(2a^{n_l-1}-a^{n_l})^{1/2}$ so:
\begin{align*}
\left|\overline{F^{(1)}(x+iw_l)}\right.&\left.F^{(1)}(x+i(2a^{n_l-1}-a^{n_l}))-\overline{F^{(2)}(x+iw_l)}
F^{(2)}(x+i(2a^{n_l-1}-a^{n_l}))\right|\\
&\leq 3\epsilon^{c_l}D_l\mathcal{N}(w_l)\mathcal{N}(2a^{n_l-1}-a^{n_l})
\left(\frac{\mathcal{N}(a^{n_l-1-k})}{\mathcal{N}(2a^{n_l-1}-a^{n_l})}\right)^{1-c}
\left(\frac{2\kappa^{-(l+1)/2}-\kappa^{-l/2}-1}{1-\sqrt{\kappa}}\right)
\end{align*}

\textbf{Second part:} we define, for any $z\in\C$ such that $-2a^{n_l-1}+a^{n_l}<\Im z<w_l$:
\begin{equation*}
H(z)=\overline{F^{(1)}(\overline{z}+iw_l)}F^{(1)}(z+i(2a^{n_l-1}-a^{n_l}))
-\overline{F^{(2)}(\overline{z}+iw_l)}F^{(2)}(z+i(2a^{n_l-1}-a^{n_l}))
\end{equation*}
We write:
\begin{equation*}
B = \frac{3}{2}D_l\left(\frac{\mathcal{N}(a^{n_l-1-k})}{\mathcal{N}(2a^{n_l-1}-a^{n_l})}\right)^{1-c}
\left(\frac{2\kappa^{-(l+1)/2}-\kappa^{-l/2}-1}{1-\sqrt{\kappa}}\right)
\end{equation*}
From the first part:
\begin{align*}
|H(x+iy)|&\leq 2\mathcal{N}(w_l)\mathcal{N}(2a^{n_l-1}-a^{n_l})B\epsilon^{c_l}
&\mbox{if }y=0,x\in[-(M-(l+1)\mu)a^{j+K};(M-(l+1)\mu)a^{j+K}]\\
&\leq 2\mathcal{N}(w_l)\mathcal{N}(2a^{n_l-1}-a^{n_l})
&\mbox{if }y=0,x\notin[-(M-(l+1)\mu)a^{j+K};(M-(l+1)\mu)a^{j+K}]
\end{align*}
Moreover, if we set $y_l=w_l-2a^{n_l-1}+a^{n_l}$:
\begin{align*}
H(x+iy_l)&=\overline{F^{(1)}(x+i(2a^{n_l-1}-a^{n_l}))}F^{(1)}(x+iw_l)-\overline{F^{(2)}(x+i(2a^{n_l-1}-a^{n_l}))}F^{(2)}(x+iw_l)\\
&=\overline{H(x)}
\end{align*}
Thus, we also have:
\begin{align*}
|H(x+iy)|&\leq 2\mathcal{N}(w_l)\mathcal{N}(2a^{n_l-1}-a^{n_l})B\epsilon^{c_l}
&\mbox{if }y=y_l,x\in[-(M-(l+1)\mu)a^{j+K};(M-(l+1)\mu)a^{j+K}]\\
&\leq 2\mathcal{N}(w_l)\mathcal{N}(2a^{n_l-1}-a^{n_l})
&\mbox{if }y=y_l,x\notin[-(M-(l+1)\mu)a^{j+K};(M-(l+1)\mu)a^{j+K}]
\end{align*}
We apply the lemma \ref{lem:holes_on_both_lines} with $a=0,b=y_l$. For $\Im z=(a-1)^2a^{n_l-2}$ and $|\Re z|\leq (M-(l+1)\mu)a^{j+K}$:
\begin{equation}
|H(z)|\leq 2\mathcal{N}(w_l)\mathcal{N}(2a^{n_l-1}-a^{n_l})(B\epsilon^{c_l})^{f(z)}\label{eq:maj_H}
\end{equation}
with $f(z)\geq 1-4\frac{(a-1)^2a^{n_l-2}}{y_l}\left(\frac{e^{-\pi\frac{(M-(l+1)\mu)a^{j+K}-|\mbox{\scriptsize\rm Re }z|}{y_l}}}{1-e^{-\pi\frac{(M-(l+1)\mu)a^{j+K}-|\mbox{\scriptsize\rm Re }z|}{y_l}}}\right)$.

From the definition of $w_l$, one may check that $(w_l)$ is a decreasing sequence which converges to $\frac{2a^{j+K}}{a+1}$ when $l$ goes to $\infty$. So, for any $l\geq 0$:
\begin{gather*}
y_l\leq w_l\leq w_0=a^{j+K}\\
y_l\geq \frac{2a^{j+K}}{a+1}-2a^{n_l-1}+a^{n_l}
\geq \frac{2a^{j+K}}{a+1}-2a^{j+K-1}+a^{j+K}=\frac{(a-1)(a+2)}{(a+1)}a^{j+K-1}
\end{gather*}
From this we deduce:
\begin{equation*}
f(z)\geq 1-4\,\frac{a^2-1}{a+2}a^{-2l-1}
\left(\frac{e^{-\pi\frac{(M-(l+1)\mu)a^{j+K}-|\mbox{\scriptsize\rm Re }z|}{a^{j+K}}}}{1-e^{-\pi\frac{(M-(l+1)\mu)a^{j+K}-|\mbox{\scriptsize\rm Re }z|}{a^{j+K}}}}\right)
\end{equation*}
So, when $|\Re z|\leq (M-(l+2)\mu)a^{j+K}$, $f(z)\geq 1-4\,\frac{a^2-1}{a+2}a^{-2l-1}\left(\frac{e^{-\pi\mu}}{1-e^{-\pi\mu}}\right)$.

As $B\geq 1$ and $f(z)\leq 1$, $B^{f(z)}\leq B$. Moreover, $c_l\leq 1$ so $c_lf(z)\geq c_l -(1-f(z))$ if $1-f(z)\geq 0$. The equation \eqref{eq:maj_H} thus gives:
\begin{align*}
|H(z)|&\leq 2\mathcal{N}(w_l)\mathcal{N}(2a^{n_l-1}-a^{n_l})B\epsilon^{c_l-4\,\frac{a^2-1}{a+2}a^{-2l-1}\left(\frac{e^{-\pi\mu}}{1-e^{-\pi\mu}}\right)}\\
&=2\mathcal{N}(w_l)\mathcal{N}(2a^{n_l-1}-a^{n_l})B\epsilon^{c_{l+1}}\\
&=3D_l\mathcal{N}(w_l)\mathcal{N}(2a^{n_l-1}-a^{n_l})^c\mathcal{N}(a^{n_l-1-k})^{1-c}\left(\frac{2\kappa^{-(l+1)/2}-\kappa^{-l/2}-1}{1-\sqrt{\kappa}}\right)\epsilon^{c_{l+1}}
\end{align*}
Because $w_l\geq w_{l+1}$ and $2a^{n_l-1}-a^{n_l}\geq a^{n_l-1-k}$, we have $\mathcal{N}(w_l)\leq\mathcal{N}(w_{l+1})$ and $\mathcal{N}(2a^{n_l-1}-a^{n_l})\leq\mathcal{N}(a^{n_l-1-k})$. Thus:
\begin{align*}
|H(z)|&\leq
3D_l\mathcal{N}(w_{l+1})\mathcal{N}(a^{n_l-2})\frac{\mathcal{N}(a^{n_l-1-k})}{\mathcal{N}(a^{n_l-2})}\left(\frac{2\kappa^{-(l+1)/2}-\kappa^{-l/2}-1}{1-\sqrt{\kappa}}\right)\epsilon^{c_{l+1}}\\
&=3D_{l+1}\mathcal{N}(w_{l+1})\mathcal{N}(a^{n_l-2})\left(\frac{2\kappa^{-(l+1)/2}-\kappa^{-l/2}-1}{1-\sqrt{\kappa}}\right)\epsilon^{c_{l+1}}
\end{align*}

So, for any $x\in[-(M-(l+2)\mu)a^{j+K};(M-(l+2)\mu)a^{j+K}]$:
\begin{align*}
\frac{1}{\mathcal{N}(w_{l+1})\mathcal{N}(a^{n_{l+1}})}&\left|\overline{F^{(1)}(x+iw_{l+1})}F^{(1)}(x+ia^{n_{l+1}})-\overline{F^{(2)}(x+iw_{l+1})}
F^{(2)}(x+ia^{n_{l+1}})\right|\\
&=|H(x+i (a-1)^2a^{n_l-2})|\\
&\leq 3D_{l+1}\left(\frac{2\kappa^{-(l+1)/2}-\kappa^{-l/2}-1}{1-\sqrt{\kappa}}\right)\epsilon^{c_{l+1}}
\end{align*}
This is exactly the induction hypothesis at the order $l+1$.
\end{proof}

\begin{proof}[Proof of the theorem \ref{thm:stability_a_less_than_2}]
We will obtain the desired theorem as a corollary of the previous one (\ref{thm:recurrence}).

The conditions \eqref{eq:hyp_rec_1} and \eqref{eq:hyp_rec_2} in the statement of the theorem \ref{thm:recurrence} are equivalent to \eqref{eq:thm_stab_hyp_1} and \eqref{eq:thm_stab_hyp_2}, required in the theorem \ref{thm:stability_a_less_than_2}.

Thus, if we fix $\mu\in[0;M[$, we have that, for any $l\geq 0$ such that $n_l\geq j$ and $M-(l+1)\mu>0$, under the condition that $c_l<1$:
\begin{align*}
\frac{1}{\mathcal{N}(w_l)\mathcal{N}(a^{n_l})}\left|\overline{F^{(1)}(x+iw_l)}F^{(1)}(x+ia^{n_l})-\right.&\left.\overline{F^{(2)}(x+iw_l)}
F^{(2)}(x+ia^{n_l})\right|\\
&\leq 3 D_l\left(\frac{2\kappa^{-l/2}-\kappa^{-(l-1)/2}-1}{1-\sqrt{\kappa}}\right)\epsilon^{c_l}\\
&\left(\forall x\in[-(M-(l+1)\mu)a^{j+K};(M-(l+1)\mu)a^{j+K}]\right)
\end{align*}
where the constants are defined as in \ref{thm:recurrence}.

We can check that, for any $l$, $w_l=\frac{a^{j+K}}{a+1}\left(2+(a-1)a^{-2l}\right)$.

We take $l=K/2$. We then have $w_l=\frac{2}{a+1}a^{j+K}+\frac{a-1}{a+1}a^j=a^J$ and $n_l=j$. For this $l$, the previous inequality is equivalent to:
\begin{align*}
\frac{1}{N_JN_j}\left|\overline{f^{(1)}\star\psi_{J}(x)}f^{(1)}\star\psi_j(x)-\overline{f^{(2)}\star\psi_{J}(x)}\right.&\left.f^{(2)}\star\psi_j(x)
\right|\\
&\leq 3 D_l\left(\frac{2\kappa^{-K/4}-\kappa^{-(K-2)/4}-1}{1-\sqrt{\kappa}}\right)\epsilon^{c_l}
\end{align*}
We observe that $c_l\geq \underset{l\to\infty}{\lim}c_l=c-2\left(1+2\,\frac{a}{a+2}\right)\left(\frac{e^{-\pi\mu}}{1-e^{-\pi\mu}}\right)\geq c\,-\,4\left(\frac{e^{-\pi\mu}}{1-e^{-\pi\mu}}\right)$ and $\frac{2\kappa^{-K/4}-\kappa^{-(K-2)/4}-1}{1-\sqrt{\kappa}}\leq \frac{2\kappa^{-K/4}}{1-\sqrt{\kappa}}$.

So, for any $x\in[-(M-\mu(1+K/2))a^{j+K};(M-\mu(1+K/2))a^{j+K}]$:
\begin{align*}
\frac{1}{N_JN_j}\left|\overline{f^{(1)}\star\psi_{J}(x)}f^{(1)}\star\psi_j(x)-\overline{f^{(2)}\star\psi_{J}(x)}\right.&\left.f^{(2)}\star\psi_j(x)
\right|\\
&\leq 6 D_l\frac{\kappa^{-K/4}}{1-\sqrt{\kappa}}\epsilon^{c-4\left(\frac{e^{-\pi\mu}}{1-e^{-\pi\mu}}\right)}
\end{align*}
From the equation \eqref{eq:recall_F_f_psi_j}:
\begin{equation*}
D_l=\underset{s=0}{\overset{K/2-1}{\prod}}\left(\frac{\mathcal{N}(a^{n_s-1-k})}{\mathcal{N}(a^{n_s-2})}\right)
=\underset{s=0}{\overset{K/2-1}{\prod}}\left(a^{p(k-1)}\frac{N_{n_s-1-k}}{N_{n_s-2}}\right)
\end{equation*}
For $\mu=\frac{M}{K+2}$, our last inequality is exactly the desired result.
\end{proof}

\section{Bounds for holomorphic functions\label{app:bounds}}

In the proofs of the section \ref{s:strong_stability_result}, we often have to consider holomorphic functions defined on a band of the complex plane. We want to obtain informations about their values inside the band from their values on the boundary of the band. This is the purpose of the three theorems contained in this section.

In the whole section, $a,b$ are fixed real numbers such that $a<b$. We write $B_{a,b}=\{z\in\C\mbox{ s.t. }a<\Im z<b\}$. We consider a holomorphic function $W:B_{a,b}\to \C$ which satisfies the following properties:
\begin{enumerate}
\item[(i)] $W$ is bounded on $B_{a,b}$.
\item[(ii)] $W$ admits a continuous extension over $\overline{B}_{a,b}$, which we still denote by $W$.
\end{enumerate}

The first theorem we need is a well-known fact. We recall its demonstration because it is very short and relies on the same idea that will also be used in the other proofs.
\begin{thm}\label{thm:bounded_on_a_band}
We suppose that, for some $A,B>0$:
\begin{gather*}
|W(z)|\leq A\mbox{ if }\Im z=a\\
|W(z)|\leq B\mbox{ if }\Im z=b
\end{gather*}
Then, for all $t\in]0;1[$ and all $z\in\C$ such that $\Im z=(1-t)a+tb$:
\begin{equation*}
|W(z)|\leq A^{1-t}B^t
\end{equation*}
\end{thm}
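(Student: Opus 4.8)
The statement is the classical Hadamard three-lines theorem for a horizontal band, and since $W$ is \emph{assumed bounded} on $B_{a,b}$, the proof reduces to a normalization followed by a Phragmén–Lindelöf argument. First dispose of the degenerate cases: if $A=0$ then $W$ vanishes on the whole line $\Im z=a$, so by analyticity $W\equiv 0$ on $B_{a,b}$ and the inequality is trivial; similarly if $B=0$. We may therefore assume $A,B>0$.

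The plan is to divide $W$ by an entire, non-vanishing interpolating function whose modulus depends only on $\Im z$. Concretely, I would set
\begin{equation*}
h(z)=\exp\!\left(\log A+\frac{\log B-\log A}{b-a}\bigl(-i(z-ia)\bigr)\right).
\end{equation*}
Because $-i(z-ia)=-iz-a$ has real part $\Im z-a$, one checks directly that, writing $s=\frac{\Im z-a}{b-a}$, we have $|h(z)|=A^{1-s}B^{s}$; in particular $|h|=A$ on $\Im z=a$, $|h|=B$ on $\Im z=b$, and $|h|=A^{1-t}B^{t}$ on the intermediate line $\Im z=(1-t)a+tb$. Now form $G=W/h$, which is holomorphic on $B_{a,b}$, continuous on $\overline{B}_{a,b}$, and \emph{bounded} on the band (since $W$ is bounded and $|h|$ is bounded away from $0$ and $\infty$ for $\Im z\in[a,b]$). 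By hypothesis $|G|\le |W|/A\le 1$ on $\Im z=a$ and $|G|\le |W|/B\le 1$ on $\Im z=b$.

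It then suffices to show $|G|\le 1$ on all of $B_{a,b}$, since this gives $|W(z)|\le|h(z)|=A^{1-t}B^{t}$ on the intermediate line. This is where the one idea of the section enters, and it is the only genuine obstacle, because the band is unbounded in the real direction and the naive maximum principle does not apply. I would regularize: for $\epsilon>0$ put $G_\epsilon(z)=G(z)e^{-\epsilon z^2}$. Since $|e^{-\epsilon z^2}|=e^{-\epsilon((\Re z)^2-(\Im z)^2)}$, the factor $e^{-\epsilon(\Re z)^2}$ forces $|G_\epsilon(z)|\to 0$ as $|\Re z|\to\infty$ uniformly for $\Im z\in[a,b]$, while on the two boundary lines $|G_\epsilon|\le e^{\epsilon\max(a^2,b^2)}=:M_\epsilon$. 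Applying the ordinary maximum modulus principle on a rectangle $[-R,R]\times[a,b]$ with $R$ large enough that $|G_\epsilon|\le M_\epsilon$ also holds on the two vertical edges yields $|G_\epsilon|\le M_\epsilon$ throughout the band. Letting first $R\to\infty$ and then $\epsilon\to 0$ (so $M_\epsilon\to 1$) gives $|G|\le 1$ on $B_{a,b}$, which completes the proof. The delicate point to handle carefully in the full write-up is precisely the uniform decay estimate that legitimizes passing from the unbounded band to a bounded rectangle; everything else is a routine verification.
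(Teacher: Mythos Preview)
Your proof is correct and follows essentially the same strategy as the paper: normalize by the interpolating exponential and then regularize to force decay at the ends of the band so the maximum principle applies. The only cosmetic differences are that the paper works directly with the subharmonic function $\log|W(z)|-\frac{(b-\Im z)\log A+(\Im z-a)\log B}{b-a}$ rather than with $W/h$, and uses the logarithmic regularizer $-\epsilon\log|z+i(1-a)|$ instead of your Gaussian $e^{-\epsilon z^2}$; both choices do the same job since $W$ is bounded.
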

\begin{proof}
For every $\epsilon>0$ and $z\in\overline{B}_{a,b}$:
\begin{equation*}
L(z)=\log(|W(z)|)-\frac{(b-\Im z)\log(A)+(\Im z-a)\log(B)}{b-a}-\epsilon\log|z+i(1-a)|
\end{equation*}
is subharmonic on $B_{a,b}$ and continuous on $\overline{B}_{a,b}$. It is upper-bounded and takes negative values on $\partial B_{a,b}$. Moreover, $L(z)\to-\infty$ when $\Re(z)\to\pm\infty$. From the maximum principle, this function must be negative on $B_{a,b}$.

Letting $\epsilon$ go to $0$ implies:
\begin{gather*}
\log(|W(z)|)\leq\frac{(b-\Im z)\log(A)+(\Im z-a)\log(B)}{b-a}
\quad\quad\forall z\in\overline{B}_{a,b}\\
\Rightarrow\quad
|W(z)|\leq A^{\frac{b-\mbox{\tiny Im } z}{b-a}}B^{\frac{\mbox{\tiny Im } z-a}{b-a}}
\end{gather*}
\end{proof}

\begin{lem}\label{lem:hole_on_one_line}
Let $A,B,\epsilon>0$ be fixed real numbers, with $\epsilon\leq 1$. We assume that:
\begin{gather*}
|W(z)|\leq B\mbox{ if }\Im z=b\\
|W(z)|\leq A\mbox{ if }\Im z=a\mbox{ and }\Re z\notin[-M;M]\\
|W(z)|\leq \epsilon A\mbox{ if }\Im z=a\mbox{ and }\Re z\in[-M;M]
\end{gather*}
Then, for all $z$ such that $a<\Im z<b$, if $t\in[0;1]$ is such that $\Im z=(1-t)a+tb$:
\begin{equation*}
|W(z)|\leq\epsilon^{f(z)}A^{1-t}B^{t}
\end{equation*}
where:
\begin{equation*}
f(z)=\frac{1}{\pi}\arg\left(\frac{e^{\pi M/(b-a)}-e^{\pi (z-ia)/(b-a)}}{e^{-\pi M/(b-a)}-e^{\pi (z-ia)/(b-a)}}\right)
\end{equation*}
and this function satisfies, when $|\Re z|\leq M$: $f(z)\geq (1-t)-2t\frac{e^{-\pi\frac{M-\left|\mbox{\scriptsize\rm Re }z\right|}{b-a}}}{1-e^{-\pi\frac{M-\left|\mbox{\scriptsize\rm Re }z\right|}{b-a}}}$.
\end{lem}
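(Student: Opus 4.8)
The plan is to pass to $u=\log|W|$, which is subharmonic on $B_{a,b}$, and to dominate it by the harmonic function solving the Dirichlet problem with the boundary data furnished by the three hypotheses. Writing $t=\frac{\Im z-a}{b-a}$, this harmonic majorant splits naturally as
\begin{equation*}
h(z)=(1-t)\log A+t\log B+f(z)\log\epsilon ,
\end{equation*}
where $(1-t)\log A+t\log B$ is the affine function of Theorem \ref{thm:bounded_on_a_band} (boundary values $\log A$ on $\Im z=a$, $\log B$ on $\Im z=b$) and $f$ is the harmonic measure of the bottom segment $[-M;M]\times\{a\}$, i.e.\ the harmonic function equal to $1$ there and to $0$ on the rest of $\partial B_{a,b}$. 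With this decomposition $h$ has boundary values $\log B$ on the top line, $\log A$ on the two bottom rays, and $\log A+\log\epsilon=\log(\epsilon A)$ on the bottom segment, which are precisely the bounds assumed on $|W|$. Hence the target inequality is exactly $u\le h$, and exponentiating it yields $|W(z)|\le\epsilon^{f(z)}A^{1-t}B^{t}$.

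First I would identify $f$ with the announced closed form and then run the maximum principle. The conformal map $z\mapsto w=\exp\!\left(\frac{\pi(z-ia)}{b-a}\right)$ sends $B_{a,b}$ onto $\H$, the bottom line onto $\R_+$, the top line onto $\R_-$, and the segment onto the real interval $[e^{-\pi M/(b-a)};e^{\pi M/(b-a)}]$. Since the harmonic measure of $[\alpha;\beta]$ in $\H$ at $w$ is $\frac1\pi\arg\frac{w-\beta}{w-\alpha}$, transporting this back through the map gives exactly the $f$ of the statement (the two fractions agreeing after multiplying numerator and denominator by $-1$); in particular $f$ is harmonic with the stated boundary values. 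Now $u-h$ is subharmonic on $B_{a,b}$, continuous up to the boundary off the two endpoints $\pm M+ia$, and $\le 0$ on $\partial B_{a,b}$. To control the unbounded ends I would use the same barrier as in Theorem \ref{thm:bounded_on_a_band}: the function $u(z)-h(z)-\delta\log|z-i(a-1)|$ (for $\delta>0$, the singularity $i(a-1)$ lying below the strip so that the barrier is harmonic) is still subharmonic, is $\le 0$ on $\partial B_{a,b}$ since $|z-i(a-1)|\ge 1$ there, and tends to $-\infty$ as $\Re z\to\pm\infty$ because $u-h$ is bounded above while the barrier diverges. The maximum principle on large rectangles then gives it $\le 0$, and $\delta\to 0$ yields $u\le h$.

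For the lower bound on $f$ when $|\Re z|\le M$, I would exploit that the four pieces of $\partial B_{a,b}$ (top line, bottom segment, and the bottom rays $R_{+},R_{-}$) have harmonic measures summing to $1$, with the top line having harmonic measure $t$; hence $f=(1-t)-\omega_{+}-\omega_{-}$, where $\omega_{\pm}$ are the harmonic measures of $R_{\pm}$. Carrying $\omega_{+}$ through the same conformal map gives $\omega_{+}(z)=\frac1\pi\arctan\frac{e^{\pi x/(b-a)}\sin\pi t}{e^{\pi M/(b-a)}-e^{\pi x/(b-a)}\cos\pi t}$ with $x=\Re z$, and the elementary estimates $\arctan\xi\le\xi$, $\sin\pi t\le\pi t$, $\cos\pi t\le 1$ reduce it to $t\,\frac{e^{-\pi(M-x)/(b-a)}}{1-e^{-\pi(M-x)/(b-a)}}$. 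The symmetric estimate holds for $\omega_{-}$, and bounding both terms by the one carrying $M-|x|$ produces the claimed inequality $f(z)\ge(1-t)-2t\frac{e^{-\pi(M-|\Re z|)/(b-a)}}{1-e^{-\pi(M-|\Re z|)/(b-a)}}$.

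The conceptual skeleton (subharmonicity, three-line majorant, maximum principle) is standard, so the real work lies in the two explicit computations. The identification of $f$ with a harmonic measure through the conformal map is the cleaner one; the main obstacle is the quantitative estimate of the ray harmonic measures in the last paragraph, where the $\arctan$, $\sin$, $\cos$ and the competing exponential factors must be combined carefully to land on the precise stated form. The Phragm\'en--Lindel\"of bookkeeping at the unbounded ends is routine but cannot be omitted, since the strip is unbounded and the naive maximum principle does not apply.
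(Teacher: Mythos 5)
Your proof is correct and follows essentially the same route as the paper: the same subharmonic comparison function with the same logarithmic barrier $-\delta\log|z-i(a-1)|$ at the unbounded ends, and the same quantitative estimate for the lower bound on $f$ (the paper bounds the arguments of the two factors $1-e^{\pi(x\mp M)/(b-a)}e^{\pm i\pi t}$ directly, which is exactly your $\omega_{\pm}$ computation in different notation). The only cosmetic difference is that you identify $f$ as the harmonic measure of the bottom segment via the conformal map onto $\H$, whereas the paper verifies the boundary values of the explicit formula directly and handles the discontinuity of $f$ at $\pm M+ia$ by an upper semi-continuous extension — a point your limsup-based maximum principle also needs (and easily gets, since $|W(\pm M+ia)|\leq\epsilon A$ by continuity).
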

\begin{proof}
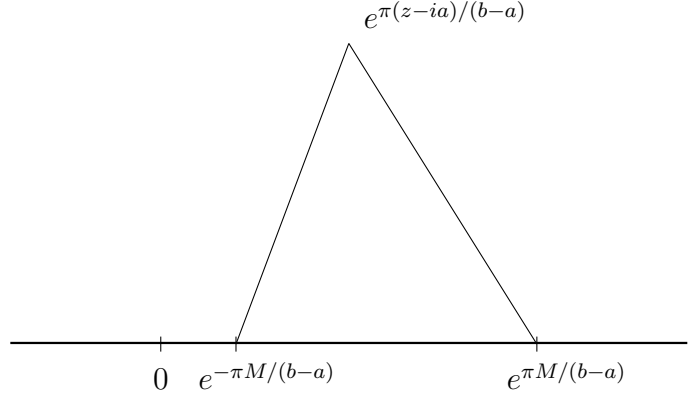
\begin{figure}
\setlength{\unitlength}{1mm}
\begin{picture}(20,50)(-35,-7)
\put(0,0){\rule{90mm}{0.3mm}}
\put(20,-1){\line(0,1){2}}
\put(30,-1){\line(0,1){2}}
\put(70,-1){\line(0,1){2}}
\put(25,-6){$e^{-\pi M/(b-a)}$}
\put(19,-6){$0$}
\put(66,-6){$e^{\pi M/(b-a)}$}
\put(30,0){\line(15,40){15}}
\put(70,0){\line(-25,40){25}}
\put(47,42){$e^{\pi (z-ia)/(b-a)}$}
\end{picture}
\caption{\label{fig:angles}Positions of the points used in the definition of $f$}
\end{figure}
The function $f$ may be continuously extended to $\overline{B}_{a,b}-\{-M+ia;M+ia\}$. By looking at the figure \ref{fig:angles}, one sees that:
\begin{align*}
f(x+ia)&=0\mbox{ for all }x\in\R-[-M;M]\\
&=1\mbox{ for all }x\in]-M;M[\\
f(x+ib)&=0\mbox{ for all }x\in\R\\
\end{align*}

We set:
\begin{equation*}
f(-M+ia)=f(M+ia)=1
\end{equation*}
This definition makes the extension of $f$ upper semi-continuous on $\overline{B}_{a,b}$ (because $f\leq 1$ on all $B_{a,b}$).

For any $\eta>0$, the following function is subharmonic on $B_{a,b}$:
\begin{equation*}
L(z)=\log(|W(z)|)-\log(\epsilon)f(z)-\frac{(b-\Im z)\log(A)+(\Im z-a)\log(B)}{b-a}-\eta\log|z+i(1-a)|
\end{equation*}
It is upper semi-continuous on $\overline{B}_{a,b}$ and tends to $-\infty$ when $\Re z\to\pm\infty$. Thus, this function admits a local maximum over $\overline{B}_{a,b}$. This maximum is attained on $\partial B_{a,b}$, because $L$ is subharmonic.

From the hypotheses, one can check that $L(z)\leq 0$ for all $z\in\partial B_{a,b}$. The function $L$ is thus negative on the whole band $\overline{B}_{a,b}$. Letting $\eta$ go to zero gives, for all $z\in B_{a,b}$ such that $\Im z=(1-t)a+tb$:
\begin{equation*}
|W(z)|\leq\epsilon^{f(z)}A^{1-t}B^{t}
\end{equation*}
We are only left to show that $f(z)\geq(1-t)-2t \frac{e^{-\pi\frac{M-\left|\mbox{\scriptsize\rm Re }z\right|}{b-a}}}{1-e^{-\pi\frac{M-\left|\mbox{\scriptsize\rm Re }z\right|}{b-a}}}$ when $\Im z=(1-t)a+tb$.

\noindent If we write $x=\Re(z)$, we have:
\begin{align*}
f(z)&=\frac{1}{\pi}\arg\left(-e^{-i\pi t}\frac{1-e^{\pi(x-M)/(b-a)}e^{\pi it})}{1-e^{-\pi(M+x)/(b-a)}e^{-\pi it}}\right)\\
&=(1-t)+\frac{1}{\pi}\arg\left(\frac{1-e^{\pi(x-M)/(b-a)}e^{\pi it})}{1-e^{-\pi(M+x)/(b-a)}e^{-\pi it}}
\right)
\end{align*}
We note that:
\begin{align*}
\left|\arg\left(1-e^{\pi\frac{x-M}{b-a}}e^{i\pi t}\right)\right|
&\leq\left|\tan\left(1-e^{\pi\frac{x-M}{b-a}}e^{i\pi t}\right)\right|\\
&=\left|\sin(\pi t)\right|\frac{e^{\pi\frac{x-M}{b-a}}}{1-e^{\pi\frac{x-M}{b-a}}\cos(\pi t)}\\
&\leq\left|\sin(\pi t)\right|\frac{e^{\pi\frac{x-M}{b-a}}}{1-e^{\pi\frac{x-M}{b-a}}}\\
&\leq \pi t \frac{e^{\pi\frac{x-M}{b-a}}}{1-e^{\pi\frac{x-M}{b-a}}}
\leq \pi t\frac{e^{-\pi\frac{M-\left|\mbox{\scriptsize\rm Re }z\right|}{b-a}}}{1-e^{-\pi\frac{M-\left|\mbox{\scriptsize\rm Re }z\right|}{b-a}}}
\end{align*}
And the same inequality holds for $\left|\arg\left(1-e^{-\pi\frac{M+x}{b-a}}e^{-i\pi t}\right)\right|$. This implies the result.

\end{proof}

The proof of the third result is similar to the proof of the second one. We do not reproduce it.
\begin{lem}\label{lem:holes_on_both_lines}
Let $M,A,\epsilon>0$ be fixed real numbers, with $\epsilon\leq 1$. We assume that:
\begin{align*}
|W(x+ia)|\leq A\quad\quad&|W(x+ib)|\leq A\quad\quad\forall x\in\R-[-M;M]\\
|W(x+ia)|\leq \epsilon A\quad\quad&|W(x+ib)|\leq\epsilon A\quad\quad\forall x\in[-M;M]
\end{align*}
Then, for all $z$ such that $a<\Im z<b$:
\begin{equation*}
|W(z)|\leq\epsilon^{f(z)}A
\end{equation*}
where:
\begin{equation*}
f(z)=\frac{1}{\pi}\arg\left(\frac{e^{\pi M/(b-a)}-e^{\pi (z-ia)/(b-a)}}{e^{-\pi M/(b-a)}-e^{\pi (z-ia)/(b-a)}}.\frac{-e^{-\pi M/(b-a)}-e^{\pi (z-ia)/(b-a)}}{-e^{\pi M/(b-a)}-e^{\pi (z-ia)/(b-a)}} \right)
\end{equation*}
and this function satisfies, when $|\Re z|\leq M$: $f(z)\geq 1-4 t\left(\frac{e^{-\pi\frac{M-\left|\mbox{\scriptsize\rm Re }z\right|}{b-a}}}{1-e^{-\pi\frac{M-\left|\mbox{\scriptsize\rm Re }z\right|}{b-a}}}\right) $, for $t=\frac{\mbox{\footnotesize\rm Im } z-a}{b-a}$.
\end{lem}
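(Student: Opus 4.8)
The plan is to follow verbatim the structure of the proof of Lemma~\ref{lem:hole_on_one_line}, the only new feature being that the ``hole'' where $|W|\le\epsilon A$ now sits on both boundary lines rather than one. First I would recognize $f$ as a harmonic measure. Under the conformal map $w=e^{\pi(z-ia)/(b-a)}$, which sends the band $B_{a,b}$ onto the upper half-plane, the bottom line $\Im z=a$ goes to the positive real axis and the top line $\Im z=b$ to the negative real axis; the two segments $\{|\Re z|\le M\}$ map respectively to $[e^{-\pi M/(b-a)},e^{\pi M/(b-a)}]$ and $[-e^{\pi M/(b-a)},-e^{-\pi M/(b-a)}]$. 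The stated $f(z)$ is exactly $\tfrac1\pi$ times the angle subtended at $w$ by the union of these two segments, that is, the harmonic measure of the part of $\partial B_{a,b}$ on which $W$ is small. In particular $f$ is harmonic on $B_{a,b}$, equals $1$ on the two holes and $0$ on the rest of each boundary line.

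Then I would run the same maximum-principle argument. For $\eta>0$ set
\[
L(z)=\log|W(z)|-\log(\epsilon)\,f(z)-\log A-\eta\log|z+i(1-a)|,
\]
which is subharmonic on $B_{a,b}$ (since $\log|W|$ is subharmonic, $f$ harmonic, and $\log|z+i(1-a)|$ harmonic because $i(a-1)$ lies strictly below the band), upper semicontinuous on $\overline{B}_{a,b}$ once $f$ is assigned the value $1$ at the four corners $\pm M+ia,\pm M+ib$, and tending to $-\infty$ as $\Re z\to\pm\infty$ thanks to the $-\eta\log|\cdot|$ term (there $f\to0$ and $|W|$ stays bounded). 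On $\partial B_{a,b}$ one checks $L\le0$: where $f=1$ the hypothesis $|W|\le\epsilon A$ gives $\log|W|\le\log\epsilon+\log A$, and where $f=0$ the hypothesis $|W|\le A$ gives $\log|W|\le\log A$; in both cases the remaining term $-\eta\log|z+i(1-a)|$ is $\le0$ since $|z+i(1-a)|\ge1$ on each of the two lines. The maximum principle then forces $L\le0$ throughout the band, and letting $\eta\to0$ yields $|W(z)|\le\epsilon^{f(z)}A$.

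For the explicit lower bound on $f$, I would write $w=ue^{i\pi t}$ with $u=e^{\pi\Re z/(b-a)}$ and $t=(\Im z-a)/(b-a)$, and factor each of the two angles as in Lemma~\ref{lem:hole_on_one_line}. The positive segment contributes $1-t+\tfrac1\pi\arg\tfrac{1-\alpha e^{i\pi t}}{1-\beta e^{-i\pi t}}$ and the negative one $t+\tfrac1\pi\arg\tfrac{1+\beta e^{-i\pi t}}{1+\alpha e^{i\pi t}}$, where $\alpha=e^{\pi(\Re z-M)/(b-a)}$ and $\beta=e^{-\pi(\Re z+M)/(b-a)}$; adding them gives $f(z)=1+\tfrac1\pi$ times a sum of four arguments of points $1\pm\alpha e^{\pm i\pi t}$, $1\pm\beta e^{\mp i\pi t}$. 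When $|\Re z|\le M$ both $\alpha$ and $\beta$ are bounded by $\gamma:=e^{-\pi(M-|\Re z|)/(b-a)}\le1$, so each real part is $\ge 1-\gamma>0$, the bound $|\theta|\le|\tan\theta|$ applies, and $|\arg(1\pm r e^{\pm i\pi t})|\le \tfrac{r|\sin\pi t|}{1-r}\le \pi t\tfrac{\gamma}{1-\gamma}$ for each term. Summing the four contributions yields $|f(z)-1|\le 4t\tfrac{\gamma}{1-\gamma}$, which is the claimed estimate.

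The argument is entirely parallel to Lemma~\ref{lem:hole_on_one_line}; the only genuinely new bookkeeping is handling two boundary segments at once. The step most worth care is the sign/geometry of the harmonic-measure factorization: one must verify that the two subtended angles combine to give the constant $1$ (rather than $1-t$ or $t$ alone) and that the four residual $\arg$ terms all have positive real part, so that the elementary $|\theta|\le|\tan\theta|$ estimate is legitimate. Everything else is the identical subharmonic maximum-principle computation.
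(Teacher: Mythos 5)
Your proof is correct and takes exactly the route the paper intends: the paper omits this proof, saying only that it is ``similar to the proof of the second [lemma]'', i.e.\ Lemma~\ref{lem:hole_on_one_line}, and your argument is precisely that adaptation, carried out correctly. The harmonic-measure reading of $f$ under $w=e^{\pi(z-ia)/(b-a)}$, the subharmonic barrier $L$ with $f$ assigned the value $1$ at the four corners $\pm M+ia$, $\pm M+ib$, and the four-term argument estimate (instead of two) producing the bound $f(z)\geq 1-4t\,\frac{e^{-\pi(M-|\mbox{\scriptsize\rm Re }z|)/(b-a)}}{1-e^{-\pi(M-|\mbox{\scriptsize\rm Re }z|)/(b-a)}}$ all match the paper's scheme.
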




\bibliographystyle{plainnat}
\bibliography{../bib.bib}

\end{document}